\DeclareMathOperator{\supp}{supp}
\DeclareMathOperator{\Imaginary}{Im}
\DeclareMathOperator{\Real}{Re}
\newcommand{\oP}{\overline{P}}
\newcommand{\oQ}{\overline{Q}}
\newcommand{\lv}{\lvert}
\newcommand{\rv}{\rvert}
\newcommand{\lV}{\lVert}
\newcommand{\rV}{\rVert}
\newcommand{\mE}{\mathcal{E}}
\newcommand{\mP}{\mathcal{P}}
\newcommand{\bC}{\mathbb{C}}
\newcommand{\bH}{\mathbb{H}}
\newcommand{\bN}{\mathbb{N}}
\newcommand{\bR}{\mathbb{R}}
\newcommand{\bZ}{\mathbb{Z}}
\def\sideremark#1{\ifvmode\leavevmode\fi\vadjust{\vbox to0pt{\vss
 \hbox to 0pt{\hskip\hsize\hskip1em
 \vbox{\hsize3cm\tiny\raggedright\pretolerance10000
 \noindent #1\hfill}\hss}\vbox to8pt{\vfil}\vss}}}
\newcommand{\comment}[1]{}
\newtheorem{thm}{Theorem}[section]
\newtheorem{prop}[thm]{Proposition}
\newtheorem{lem}[thm]{Lemma}
\newtheorem{cor}[thm]{Corollary}
\theoremstyle{definition}
\newtheorem{defn}[thm]{Definition}
\theoremstyle{remark}
\newtheorem{remark}[thm]{Remark}
\numberwithin{equation}{section}
\newcommand{\Complex}{\mathbb C}
\newcommand{\ddbar}{\overline\partial}
\newcommand{\pr}{\partial}
\newcommand{\ol}{\overline}
\newcommand{\Td}{\widetilde}
\newcommand{\norm}[1]{\left\Vert#1\right\Vert}
\newcommand{\abs}[1]{\left\vert#1\right\vert}
\newcommand{\set}[1]{\left\{#1\right\}}
\newcommand{\To}{\rightarrow}
\theoremstyle{remark}
\newtheorem{rem}[thm]{Remark}
\begin{document}

\title{Extremal metrics for the ${Q}^\prime$-curvature in three dimensions}
\author{Jeffrey S.\ Case}
  \address{Department of Mathematics, McAllister Building, The Pennsylvania State University, State College, PA 16802}
  \thanks{JSC was partially supported by NSF Grant DMS-1004394}
  \email{jscase@psu.edu}
\author{Chin-Yu Hsiao}
  \address{Institute of Mathematics, Academia Sinica, 6F, Astronomy-Mathematics Building, No.1, Sec.4, Roosevelt Road, Taipei 10617, Taiwan}
  \thanks{CYH was supported by Taiwan Ministry of Science of Technology project 
103-2115-M-001-001, 104-2628-M-001-003-MY2 and the Golden-Jade fellowship of Kenda Foundation}
 \email{chsiao@math.sinica.edu.tw}
\author{Paul Yang}
  \thanks{PY was partially supported by NSF Grant DMS-1104536}
  \address{Department of Mathematics \\ Princeton University \\ Princeton, NJ 08540}
  \email{yang@math.princeton.edu}
% \date{}
% \keywords{}
% \subjclass[2000]{}
\begin{abstract}
We construct contact forms with constant $Q^\prime$-curvature on compact three-dimensional CR manifolds which admit a pseudo-Einstein contact form and satisfy some natural positivity conditions.  These contact forms are obtained by minimizing the CR analogue of the $II$-functional from conformal geometry.  Two crucial steps are to show that the $P^\prime$-operator can be regarded as an elliptic pseudodifferential operator and to compute the leading order terms of the asymptotic expansion of the Green's function for $\sqrt{P^\prime}$.
\end{abstract}
\maketitle

\section{Introduction}
\label{sec:intro}

The geometry of CR manifolds is studied via a choice of contact form and the induced Levi form.  A natural question is whether there are preferred choices of contact form.  One such choice is a CR Yamabe contact form, which has the property that the pseudohermitian scalar curvature is constant.  Such contact forms exist on all compact CR manifolds~\cite{ChengMalchiodiYang2013,Gamara2001,GamaraYacoub2001,JerisonLee1987,JerisonLee1989}.  Another such choice is a pseudo-Einstein contact form with constant $Q^\prime$-curvature~\cite{CaseYang2012,Hirachi2013}.  The primary goal of this article is to show that the latter class of contact forms always exist in dimension three under natural positivity assumptions.

The idea of the $Q^\prime$-curvature arose in the work of Branson, Fontana and Morpurgo~\cite{BransonFontanaMorpurgo2007} on Moser--Trudinger and Beckner--Onofri inequalities on the CR spheres.  On any even-dimensional Riemannian manifold $(M^n,g)$, the critical GJMS operator $P_n$ is a conformally covariant differential operator $P_n$ with leading order term $(-\Delta)^{n/2}$ which controls the behavior of the critical $Q$-curvature $Q_n$ within a conformal class (cf.\ \cite{Branson1995}).  Specializing to the case of the standard $n$-sphere $(S^n,g_0)$ in even dimensions, Beckner~\cite{Beckner1993} and, via different techniques, Chang and the third-named author~\cite{ChangYang1995}, used these objects to establish the Beckner--Onofri inequality:
\begin{equation}
\label{eqn:beckner_onofri}
\int_{S^n} w\,P_nw + 2\int_{S^n}Q_nw - \frac{2}{n}\left(\int_{S^n} Q_n\right)\log\fint_{S^n} e^{nw} \geq 0
\end{equation}
for all $w\in W^{n/2,2}$ and for $Q_n$ an explicit (nonzero) dimensional constant.  Moreover, equality holds in~\eqref{eqn:beckner_onofri} if and only if $e^{2w}g_0$ is Einstein, or equivalently, if and only if $e^{2w}g_0=\Phi^\ast g_0$ for $\Phi$ an element of the conformal transformation group of $S^n$.

Branson, Fontana and Morpurgo investigated to what extent the above discussion holds on the standard CR spheres $(S^{2n+1},T^{1,0}S^{2n+1},\theta_0)$.  While it has long been known that there is a CR covariant operator $P_n$ with leading order term $(-\Delta_b)^{n+1}$, this operator has an infinite-dimensional kernel, namely the space $\mP$ of CR pluriharmonic functions~\cite{Graham1984}.  For this reason one does not expect $P_n$ to give rise to a CR analogue of the sharp Beckner--Onofri inequality~\eqref{eqn:beckner_onofri}.  Instead, Branson, Fontana and Morpurgo~\cite{BransonFontanaMorpurgo2007} observed that there is another operator $P_n^\prime$, defined only on $\mP$, with all of the desired properties.  That is, $P_n^\prime$ has leading term $(-\Delta_b)^{n+1}$, is CR covariant, and there is a (nonzero) dimensional constant $Q_n^\prime$ such that
\begin{equation}
\label{eqn:bfm}
\int_{S^{2n+1}} w\,P_n^\prime w + 2\int_{S^{2n+1}}Q_n^\prime w - \frac{2}{n+1}\left(\int_{S^{2n+1}} Q_n^\prime\right)\log\fint_{S^{2n+1}} e^{(n+1)w} \geq 0
\end{equation}
for all $w\in W^{n+1,2}\cap\mP$.  Moreover, equality holds in~\eqref{eqn:bfm} if and only if $e^{2w}\theta_0$ is pseudo-Einstein and torsion-free, or equivalently, if and only if $e^{2w}\theta_0=\Phi^\ast\theta_0$ for $\Phi$ a CR automorphism of $(S^{2n+1},T^{1,0}S^{2n+1})$.

In light of~\eqref{eqn:beckner_onofri}, it is natural to seek metrics of constant $Q$-curvature within a given conformal class on an even-dimensional Riemannian manifold.  This question has been intensively studied in four dimensions.  In particular, Chang and the third-named author~\cite{ChangYang1995} showed that on any compact Riemannian four-manifold $(M^4,g)$ for which the Paneitz operator $P_4$ is nonnegative with trivial kernel and for which $\int Q_4<16\pi^2$, one can construct a metric $\hat g:=e^{2w}g$ for which $\hat Q_4$ is constant by minimizing the functional
\[ II(w) := \int_M w\,P_4w + 2\int_M Q_4w - \frac{1}{2}\left(\int_M Q_4\right)\log\fint_M e^{4w} . \]
This construction, and various modifications of it, have played an important role in studying the geometry of four-manifolds; see~\cite{Chang2005} for further discussion.

The purpose of this article is to show that one can similarly construct contact forms with constant $Q^\prime$-curvature on a compact three-dimensional CR manifold under natural positivity assumptions.  To explain this, let us first recall the essential features of the $Q^\prime$-curvature~\cite{CaseYang2012}.  On any pseudohermitian three-manifold $(M^3,T^{1,0}M,\theta)$, there is a differential operator $P_4^\prime\colon\mP\to C^\infty(M)$ defined on the space $\mP$ of CR pluriharmonic functions with the properties that $P_4^\prime$ has leading term $\Delta_b^2$, is symmetric in the sense that the pairing $(u,v)\mapsto \int u\,P_4^\prime v$ is symmetric on $\mP$, and satisfies the transformation formula
\begin{equation}
 \label{eqn:intro_pprime_transformation}
 e^{2w}\hat P_4^\prime(u) = P_4^\prime(u) \mod \mP^\perp
\end{equation}
for all $u\in\mP$, where $w\in C^\infty(M)$ and $\hat P_4^\prime$ is defined in terms of $\hat\theta=e^w\theta$.  The analytic properties of the $P^\prime$-operator are improved by projecting onto $\mP$.  As we will see, if $\tau\colon C^\infty(M)\to\mP$ is the orthogonal projection, then the operator $\oP_4^\prime:=\tau P_4^\prime\colon\mP\to\mP$ is a formally self-adjoint elliptic pseudodifferential operator.

In general, one cannot associate an analogue of the $Q$-curvature to $P_4^\prime$.  However, one can do so when restricting to pseudo-Einstein contact forms.  A contact form $\theta$ on $(M^3,T^{1,0}M)$ is pseudo-Einstein if its scalar curvature $R$ and torsion $A_{11}$ satisfy the relation $\nabla_1R=i\nabla^1A_{11}$.  This is equivalent to requiring that $\theta$ is locally volume-normalized with respect to a nonvanishing closed $(2,0)$-form~\cite{Hirachi1990}; such contact forms always exist on boundaries of domains in $\bC^2$~\cite{FeffermanHirachi2003}.  For pseudo-Einstein contact forms, one can define a scalar invariant $Q_4^\prime$ which satisfies a simple transformation rule in terms of $P_4^\prime$ and the CR Paneitz operator $P_4$ upon changing the choice of pseudo-Einstein contact forms.  In particular, $\int Q_4^\prime$ is an invariant of the class of pseudo-Einstein contact forms.  For boundaries of domains, it is a biholomorphic invariant; indeed, it is the Burns--Epstein invariant~\cite{BurnsEpstein1988,CaseYang2012}.

Suppose that $\theta$ is a pseudo-Einstein contact form on $(M^3,T^{1,0}M)$.  Then $\hat\theta=e^w\theta$ is pseudo-Einstein if and only if $w$ is a CR pluriharmonic function~\cite{Hirachi1990}.  In particular, it makes sense to consider the transformation formula for the $Q^\prime$-curvature, and one obtains
\begin{equation}
 \label{eqn:intro_qprime_transformation}
 e^{2w}\hat Q_4^\prime = Q_4^\prime + P_4^\prime(w) \mod \mP^\perp
\end{equation}
(see~\cite{CaseYang2012}).  It is thus natural to consider the scalar quantity $\oQ_4^\prime := \tau Q_4^\prime$.  In particular, on the standard CR three-sphere, $\oP_4^\prime$ is precisely the operator considered by Branson, Fontana and Morpurgo~\cite{BransonFontanaMorpurgo2007} and $\oQ_4^\prime$ is precisely the constant in~\eqref{eqn:bfm}.

We construct contact forms for which $\oQ_4^\prime$ is constant by constructing minimizers of the $II$-functional $II\colon\mP\to\bR$ given by
\begin{equation}
\label{eqn:ii_functional}
II(w) = \int_M w\,\oP_4^\prime w + 2\int_M \oQ_4^\prime w - \left(\int_M \oQ_4^\prime\right)\log\fint_M e^{2w}
\end{equation}
on a pseudo-Einstein three-manifold $(M^3,T^{1,0}M,\theta)$.  Note that, since $II$ is only defined on $\mP$, the projections in~\eqref{eqn:ii_functional} can be removed; i.e.\ we can equivalently define the $II$-functional in terms of $P_4^\prime$ and $Q_4^\prime$.  In general the $II$-functional is not bounded below.  However, under natural positivity conditions it is bounded below and coercive, in which case we can construct the desired minimizers.

\begin{thm}
\label{thm:minimizer}
Let $(M^3,T^{1,0}M,\theta)$ be a compact, embeddable pseudo-Einstein three-manifold such that the $P^\prime$-operator $\oP_4^\prime$ is nonnegative and $\ker \oP_4^\prime=\bR$.  Suppose additionally that
\begin{equation}
\label{eqn:q-bound}
\int_M \oQ_4^\prime\,\theta\wedge d\theta < 16\pi^2 .
\end{equation}
Then there exists a function $w\in\mP$ which minimizes the $II$-functional~\eqref{eqn:ii_functional}.  Moreover, the contact form $\hat\theta := e^w\theta$ is such that $\hat\oQ_4^\prime$ is constant.
\end{thm}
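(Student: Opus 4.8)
The argument follows the variational strategy of Chang and the third-named author~\cite{ChangYang1995} for the $Q$-curvature in four dimensions, carried out with the self-adjoint elliptic pseudodifferential operator $\oP_4^\prime$ in place of the Paneitz operator. The essential analytic input is a sharp Beckner--Onofri inequality for $\oP_4^\prime$: there should be a constant $C=C(M,T^{1,0}M,\theta)$ with
\[
\log\fint_M e^{2(u-\bar u)}\,\theta\wedge d\theta \;\leq\; \frac{1}{16\pi^2}\int_M u\,\oP_4^\prime u\,\theta\wedge d\theta + C \qquad\text{for all }u\in\mP,
\]
where $\bar u=\fint_M u\,\theta\wedge d\theta$. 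Since $\oP_4^\prime$ has order four relative to the homogeneous dimension $4$ of $M^3$, this lies in the borderline exponential-class regime, and the precise constant $16\pi^2$ (which equals $\int_{S^3}Q_4^\prime$ on the standard CR sphere) is pinned down by the leading coefficient of the logarithmic singularity of the Green's function of $\sqrt{\oP_4^\prime}$. Thus this step draws directly on the pseudodifferential description of $\oP_4^\prime$ and the Green's function asymptotics for $\sqrt{\oP_4^\prime}$, via an adaptation of the Adams--Fontana and Beckner arguments; I expect this to be by far the most delicate part of the proof, the remaining steps being the standard direct method.

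Granting the Beckner--Onofri inequality, coercivity follows quickly. Since $\ker\oP_4^\prime=\bR$ one has $II(w+c)=II(w)$ for constants $c$, so we may assume $\bar w=0$, whence $II(w)=\int_M w\,\oP_4^\prime w + 2\int_M\oQ_4^\prime w - k\log\fint_M e^{2w}\,\theta\wedge d\theta$ with $k:=\int_M\oQ_4^\prime\,\theta\wedge d\theta<16\pi^2$. If $k\leq 0$, Jensen's inequality gives $\log\fint_M e^{2w}\geq 0$, so $II(w)\geq\int_M w\,\oP_4^\prime w - C\|w\|_{L^2}$; if $0<k<16\pi^2$, the Beckner--Onofri inequality gives $II(w)\geq\bigl(1-\tfrac{k}{16\pi^2}\bigr)\int_M w\,\oP_4^\prime w - C\|w\|_{L^2} - kC$. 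In either case G\aa{}rding's inequality for the nonnegative elliptic operator $\oP_4^\prime$, together with the spectral gap from $\ker\oP_4^\prime=\bR$, yields $\int_M w\,\oP_4^\prime w\geq c_0\|w\|_{S^{2,2}}^2$ on the $L^2$-orthogonal complement of the constants, $S^{2,2}$ being the Folland--Stein Sobolev space; hence $II\geq c_1\|w\|_{S^{2,2}}^2-C\|w\|_{S^{2,2}}-C$ is bounded below and coercive on $\{w\in\mP:\bar w=0\}$.

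Now take a minimizing sequence $(w_j)\subset\mP$ with $\bar w_j=0$. Coercivity bounds $\|w_j\|_{S^{2,2}}$, so after passing to a subsequence $w_j\rightharpoonup w$ weakly in $S^{2,2}$, strongly in $L^2$ by the Rellich--Kondrachov theorem for Folland--Stein spaces, and pointwise a.e. The space $\mP$ is a closed, hence weakly closed, subspace of $S^{2,2}$, so $w\in\mP$; the quadratic form $u\mapsto\int_M u\,\oP_4^\prime u$ is convex and continuous, hence weakly lower semicontinuous; the linear term converges by strong $L^2$ convergence; and applying the Beckner--Onofri inequality to $pw_j$ shows $\sup_j\int_M e^{pw_j}\,\theta\wedge d\theta<\infty$ for every $p<\infty$, so $(e^{2w_j})$ is uniformly integrable and $e^{2w_j}\to e^{2w}$ in $L^1$. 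Therefore $II(w)\leq\liminf_j II(w_j)=\inf_{\mP}II$ and $w$ is a minimizer; elliptic regularity for $\oP_4^\prime$ applied to the Euler--Lagrange equation below, bootstrapping from $e^{2w}\in\bigcap_{p}L^p$, gives $w\in C^\infty(M)$, so $\hat\theta:=e^w\theta$ is a smooth contact form.

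Finally, differentiating $II(w+t\phi)$ at $t=0$ for $\phi\in\mP$ and using the symmetry of $P_4^\prime$ on $\mP$ gives
\[
\int_M\phi\left(P_4^\prime w + Q_4^\prime - \frac{k}{\int_M e^{2w}\,\theta\wedge d\theta}\,e^{2w}\right)\theta\wedge d\theta = 0\qquad\text{for all }\phi\in\mP.
\]
By~\eqref{eqn:intro_qprime_transformation}, $P_4^\prime w+Q_4^\prime$ and $e^{2w}\hat Q_4^\prime$ differ by an element of $\mP^\perp$, and $\hat\theta\wedge d\hat\theta=e^{2w}\theta\wedge d\theta$, so the equation becomes $\int_M\phi\,(\hat Q_4^\prime-c)\,\hat\theta\wedge d\hat\theta=0$ for all $\phi\in\mP$, where $c=k/\Vol(\hat\theta)$; since $\int_M Q^\prime$ is an invariant of the pseudo-Einstein class one also has $c=\int_M\hat Q_4^\prime\,\hat\theta\wedge d\hat\theta/\Vol(\hat\theta)$. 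Equivalently, the $L^2(\hat\theta)$-orthogonal projection onto $\mP$ annihilates $\hat Q_4^\prime-c$, that is, $\hat\oQ_4^\prime\equiv c$ is constant, which is the assertion. The loose ends are to justify the variational computation in the $S^{2,2}$ topology and to confirm that it is $\ker\oP_4^\prime=\bR$, and not merely $\oP_4^\prime\geq0$, that prevents a degenerate minimizing sequence; both are routine given the coercivity above and the mapping properties of $\oP_4^\prime$.
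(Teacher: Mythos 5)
Your proposal follows essentially the same route as the paper: the sharp Moser--Trudinger/Beckner--Onofri inequality for $\oP_4^\prime$ (the paper's Theorem~\ref{thm:cr_paneitz_adams}, obtained from the Green's function asymptotics of $\bigl(\oP_4^\prime\bigr)^{1/2}$ via the Fontana--Morpurgo Adams theorem) yields coercivity of $II$ on mean-zero elements of $\mP$, the direct method produces a weak minimizer in $W^{2,2}\cap\mP$, a parametrix-based bootstrap gives smoothness, and the Euler--Lagrange equation combined with the transformation law~\eqref{eqn:qprime_transformation} shows $\hat\oQ_4^\prime$ is constant. One small correction: the relevant singularity of the Green's function of $\bigl(\oP_4^\prime\bigr)^{1/2}$ is the power singularity $\rho^{-2}$ (Theorem~\ref{t-gue140827}), not a logarithmic one; it is the universal coefficient of $\rho^{-2}$ that pins down the constant $16\pi^2$.
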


The assumptions of Theorem~\ref{thm:minimizer} can be replaced by the assumptions that the CR Paneitz operator is nonnegative and there exists a pseudo-Einstein contact form with scalar curvature nonnegative but not identically zero.  Note that this last assumption implies that the CR Yamabe constant is positive; it would be interesting to know if these conditions are equivalent.  Chanillo, Chiu and the third-named author proved~\cite{ChanilloChiuYang2010} that these assumptions imply that $(M^3,T^{1,0}M)$ is embeddable.  The first- and third-named authors proved~\cite{CaseYang2012} that these assumptions imply both that $\oP_4^\prime\geq0$ with $\ker\oP_4^\prime=\bR$ and that $\int \oQ_4^\prime\leq 16\pi^2$ with equality if and only if $(M^3,T^{1,0}M)$ is CR equivalent to the standard CR three-sphere.  Branson, Fontana and Morpurgo proved~\cite{BransonFontanaMorpurgo2007} Theorem~\ref{thm:minimizer} on the standard CR three-sphere.  In summary, Theorem~\ref{thm:minimizer} implies the following result.

\begin{cor}
 \label{cor:minizer}
 Let $(M^3,T^{1,0}M,\theta)$ be a compact pseudo-Einstein manifold with nonnegative CR Paneitz operator which admits a pseudo-Einstein contact form with positive scalar curvature.  Then there exists a function $w\in\mP$ which minimizes the $II$-functional~\eqref{eqn:ii_functional}.  Moreover, the contact form $\hat\theta := e^w\theta$ is such that $\hat\oQ_4^\prime$ is constant.
\end{cor}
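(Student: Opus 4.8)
The plan is to verify that the hypotheses of the corollary imply, via results recalled in the discussion following Theorem~\ref{thm:minimizer}, those of that theorem, and then to handle separately the borderline case of the standard CR three-sphere. First I would note that a pseudo-Einstein contact form of positive pseudohermitian scalar curvature in particular has scalar curvature that is nonnegative and not identically zero. By the theorem of Chanillo, Chiu and the third-named author~\cite{ChanilloChiuYang2010}, nonnegativity of the CR Paneitz operator together with the existence of such a contact form implies that $(M^3,T^{1,0}M)$ is embeddable, so the embeddability requirement of Theorem~\ref{thm:minimizer} is automatic. By the work of the first- and third-named authors~\cite{CaseYang2012}, these same hypotheses also give $\oP_4^\prime\ge0$ with $\ker\oP_4^\prime=\bR$ and $\int_M\oQ_4^\prime\,\theta\wedge d\theta\le16\pi^2$, with equality if and only if $(M^3,T^{1,0}M)$ is CR equivalent to the standard CR three-sphere.

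There are then two cases. If $(M^3,T^{1,0}M)$ is not CR equivalent to the sphere, the inequality~\eqref{eqn:q-bound} is strict, so every hypothesis of Theorem~\ref{thm:minimizer} holds; that theorem then produces a minimizer $w\in\mP$ of the $II$-functional~\eqref{eqn:ii_functional}, and $\hat\theta=e^w\theta$ has $\hat\oQ_4^\prime$ constant. If instead $(M^3,T^{1,0}M)$ is the standard CR three-sphere, then~\eqref{eqn:q-bound} is not strict and Theorem~\ref{thm:minimizer} does not apply; here the assertion is precisely the sharp inequality~\eqref{eqn:bfm} of Branson, Fontana and Morpurgo~\cite{BransonFontanaMorpurgo2007}, which states that $II$ is bounded below on $\mP$ and attains its infimum, the minimizers being the CR pluriharmonic functions $w$ for which $e^{2w}\theta_0$ is pseudo-Einstein and torsion-free; such $w$ have $\oQ_4^\prime$ constant. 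As observed in the introduction, on the sphere $\oP_4^\prime$ and $\oQ_4^\prime$ agree with the operator and constant appearing in~\eqref{eqn:bfm}, so no further matching of conventions is required.

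Since all of the cited inputs are already available, this argument carries no new analytic content; the nearest thing to an obstacle is purely organizational --- making sure that positivity of the scalar curvature genuinely suffices to invoke both~\cite{ChanilloChiuYang2010} and~\cite{CaseYang2012}, and remembering that the equality case, namely the sphere, must be routed through~\cite{BransonFontanaMorpurgo2007} rather than through Theorem~\ref{thm:minimizer}.
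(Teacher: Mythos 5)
Your argument is correct and is essentially the paper's own: the paragraph preceding the corollary reduces it to Theorem~\ref{thm:minimizer} via exactly the citations you use (embeddability from~\cite{ChanilloChiuYang2010}, positivity of $\oP_4^\prime$ and the bound $\int\oQ_4^\prime\leq16\pi^2$ from~\cite{CaseYang2012}), with the equality case handled by~\cite{BransonFontanaMorpurgo2007} on the sphere. Your explicit two-case split merely makes the paper's implicit logic more transparent.
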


Note that the assumptions of Theorem~\ref{thm:minimizer} are all CR invariant; in particular, if $(M^3,T^{1,0}M)$ is the boundary of a domain in $\bC^2$, the assumptions are biholomorphic invariants.  Note also that the conclusion that $\hat\oQ_4^\prime$ is constant cannot be strengthened to the conclusion that $\hat Q_4^\prime$ is constant: In Section~\ref{sec:example}, we classify the contact forms on $S^1\times S^2$ with its flat CR structure which have $\oQ_4^\prime$ constant, and observe that $Q_4^\prime$ is nonconstant for all of them.

The proof of Theorem~\ref{thm:minimizer} is analogous to the corresponding result in four-dimensional conformal geometry~\cite{ChangYang1995}, though there are many new difficulties we must overcome.  Since we are minimizing within $\mP$, there is a Lagrange multiplier in the Euler equation for the $II$-functional which lives in the orthogonal complement $\mP^\perp$ to $\mP$.  This is avoided by working with $\oP_4^\prime$.  The greater difficulty is to show that minimizers for the $II$-functional exist in $W^{2,2}\cap\mP$ under the hypotheses of Theorem~\ref{thm:minimizer}.  This is achieved by showing that $\oP_4^\prime$ satisfies a Moser--Trudinger-type inequality with the same constant as on the standard CR three-sphere under the positive assumption on $\oP_4^\prime$ and~\eqref{eqn:q-bound}.

To prove that $\oP_4^\prime$ satisfies the above Moser--Trudinger-type inequality, we study the asymptotics of the Green's function of $\bigl(\oP_4^\prime\bigr)^{1/2}$ in enough detail to apply the general results of Fontana and Morpurgo~\cite{FontanaMorpurgo2011}.  To make this precise, we require some more notation.  Fix $\zeta\in M$ and let $(z,t)$ be CR normal coordinates in a neighborhood of $\zeta$ such that $(z(\zeta),t(\zeta))=(0,0)$.  Define $\rho^4(z,t)=\lv z\rv^4+t^2$.  For $m\in\bR$, let
\[ \mE(\rho^m) = \left\{ g \in C^\infty(M\setminus\{\zeta\}) \colon \lv\pr^p_z\pr^q_{\ol z}\pr^r_t g(z,t)\rv \leq \rho(z,t)^{m-p-q-2r} \text{ near $\zeta$} \right\} . \]
The asymptotics of the Green's function of $\bigl(\oP_4^\prime\bigr)^{1/2}$ are as follows.
\begin{thm}
\label{t-gue140827}
Let $(M^3,T^{1,0}M,\theta)$ be a compact embeddable pseudohermitian manifold such that $P_4^\prime$ is nonnegative. Fix $\zeta\in M$ and let $G_\zeta$ be the Green's function for $\bigl(\oP_4^\prime\bigr)^{1/2}$ with pole at $\zeta$.  Then there is a function $B_\zeta\in C^\infty(M\setminus\{\zeta\})$ such that
\[ B_\zeta - \rho^{-2} \in \mE\left(\rho^{-1-\varepsilon}\right) \]
for all $0<\varepsilon<1$ and
\[ G_\zeta = \tau B_\zeta \tau . \]
\end{thm}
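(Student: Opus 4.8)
The plan is to realize $\bigl(\oP_4^\prime\bigr)^{1/2}$ and its inverse as pseudodifferential operators in the Heisenberg calculus, construct a parametrix, and read off $G_\zeta$ from the Schwartz kernel of that parametrix. First I would invoke the result established earlier in the paper: on a compact embeddable pseudohermitian three-manifold, $\oP_4^\prime=\tau P_4^\prime\tau$ is a formally self-adjoint elliptic pseudodifferential operator of order $4$ on $\mP$ in the Heisenberg calculus (orders normalized so that $\Delta_b$ has order $2$), embeddability being what makes the projection $\tau$ onto $\mP$ a member of this calculus. Since $P_4^\prime\geq0$ we have $\oP_4^\prime\geq0$ with finite-dimensional kernel, so the functional calculus defines $\bigl(\oP_4^\prime\bigr)^{1/2}$; the Seeley-type construction of complex powers in the Heisenberg calculus then shows that $\bigl(\oP_4^\prime\bigr)^{1/2}$ is elliptic of order $2$ and that $\bigl(\oP_4^\prime\bigr)^{-1/2}$, defined on the orthogonal complement of $\ker\oP_4^\prime$ inside $\mP$, is elliptic of order $-2$. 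The orthogonal projection onto $\ker\oP_4^\prime$ is finite rank and hence has a smooth Schwartz kernel, so it does not affect the singularity at $\zeta$; thus $G_\zeta$ coincides, up to a term smooth across $\zeta$, with the Schwartz kernel of $\tau\bigl(\oP_4^\prime\bigr)^{-1/2}\tau$ in the second slot $\zeta$.

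The heart of the argument is the local analysis of this kernel in the CR normal coordinates $(z,t)$ around $\zeta$. An order $-2$ operator in the Heisenberg calculus has a Schwartz kernel whose expansion consists of terms homogeneous under the Heisenberg dilations $(z,t)\mapsto(\lambda z,\lambda^2 t)$; since $\bH^1$ has homogeneous dimension $4$ and the operator has order $-2$, the leading term is homogeneous of degree $-2$, hence lies in $\mE(\rho^{-2})$, while the remainder lies in $\mE(\rho^{-1-\varepsilon})$ for every $0<\varepsilon<1$, the $\varepsilon$-loss reflecting the non-classical nature of the projection $\tau$ and of the fractional power. To identify the leading term as $\rho^{-2}$ exactly I would compute the model operator at $\zeta$: because $P_4^\prime$ has leading term $\Delta_b^2$ and $\tau$ fixes $\mP$, the model of $\oP_4^\prime$ is the compression of $\Delta_b^2$ to the pluriharmonic functions on $\bH^1$; since $\Delta_b$ preserves pluriharmonic functions on $\bH^1$ and is there nonnegative, this compression equals the square of $\Delta_b$ restricted to pluriharmonic functions, so the model of $\bigl(\oP_4^\prime\bigr)^{1/2}$ is $\Delta_b$ restricted to pluriharmonic functions, and the model of $\bigl(\oP_4^\prime\bigr)^{-1/2}$ convolves, up to $\tau$, against the Folland fundamental solution of the sub-Laplacian of $\bH^1$, which in the normalization carried by $\rho^4=\lv z\rv^4+t^2$ equals $\rho^{-2}$. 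Letting $B_\zeta\in C^\infty(M\setminus\{\zeta\})$ be the parametrix kernel in the second slot $\zeta$, corrected by a function in $C^\infty(M)$ so that the displayed identity holds exactly rather than modulo smoothing, the expansion above gives $B_\zeta-\rho^{-2}\in\mE(\rho^{-1-\varepsilon})$ for every $0<\varepsilon<1$. Finally, since $\bigl(\oP_4^\prime\bigr)^{1/2}$ acts on $\mP=\im\tau$ and both $G_\zeta$ and $\tau B_\zeta\tau$ are Green's functions for it there, they agree: $G_\zeta=\tau B_\zeta\tau$.

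I expect the main obstacle to lie entirely in this middle step, and to have two faces. First, one must verify that $\bigl(\oP_4^\prime\bigr)^{1/2}$ and $\bigl(\oP_4^\prime\bigr)^{-1/2}$ really belong to the Heisenberg calculus with kernel estimates sharp enough both to isolate the $\rho^{-2}$ term and to bound the remainder by $\rho^{-1-\varepsilon}$; because the Szeg\H{o}-type projection $\tau$ is not a classical pseudodifferential operator, the subprincipal data involve the Szeg\H{o} kernel and its asymptotic expansion, whose logarithmic terms are the source of the $\varepsilon$-loss, and these must be propagated through the fractional power. Second, extracting the exact leading coefficient $1$ rather than an unidentified positive constant forces the explicit normal-coordinate computation of the model operator and of the fundamental solution of $\Delta_b$ on $\bH^1$, where the particular homogeneous norm $\rho$ must be reconciled with the normalization of $P_4^\prime$. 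Once these are settled, the passage to $G_\zeta=\tau B_\zeta\tau$ and the verification that the expansion is precise enough to feed into the framework of Fontana and Morpurgo are routine.
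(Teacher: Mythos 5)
Your strategy --- treat $\bigl(\oP_4^\prime\bigr)^{1/2}$ as an elliptic element of the Heisenberg calculus, build complex powers there, and read off the leading singularity from the model operator on $\bH^1$ --- is genuinely different from the paper's, and it has a gap exactly at the point you flag as "the main obstacle": you assume that $\tau$, and hence $\oP_4^\prime=\tau P_4^\prime\tau$ and its fractional powers, belong to a Heisenberg-type calculus admitting polyhomogeneous kernel expansions and a Seeley construction. No such statement is available off the shelf: $\tau=S+\ol S+F$ with $S$ the Szeg\H{o} projector, which is a Fourier integral operator with complex phase (Boutet de Monvel--Sj\"ostrand), not a Heisenberg pseudodifferential operator in the standard sense, and the existence of complex powers for operators of the form $\tau E\tau$ is precisely what must be proved. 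The paper's route around this is a different and sharper observation: on the range of $S$ the Kohn Laplacian is microlocally $2iT$, so $\tau\Delta_b\tau=\tau E_1\tau$ for a \emph{classical isotropic} elliptic operator $E_1$ of order $1$, whence $\oP_4^\prime=\tau E_2\tau$ with $E_2\in L^2_{\rm cl}(M)$ classically elliptic of order $2$ (Theorems~\ref{t-gue140813} and~\ref{t-gue140814I}). The square root is then accessed not through an abstract power calculus but through the spectral integral $c\int_0^\infty t^{-1/2}(\oP_4^\prime+t+\pi)^{-1}\,dt$, with a hand-built resolvent parametrix $G_t$ whose $t$-uniform kernel and cancellation estimates (Sections~\ref{sec:hkernel}--\ref{sec:hsqrt}) are what produce the $\mE(\rho^{-1-\varepsilon})$ remainder. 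Without either this reduction or a developed extended Heisenberg calculus containing $\tau$, your middle step is an assertion, not an argument.

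The second incomplete point is the identification of the leading term as exactly $\rho^{-2}$. You propose computing the model of $\bigl(\oP_4^\prime\bigr)^{-1/2}$ on $\bH^1$ as the inverse of the sublaplacian on pluriharmonic functions and invoking Folland's fundamental solution; you concede the constant is not tracked (note $P_4^\prime$ has leading term $4\Delta_b^2$, so the relevant model is $2\Delta_b$, and the sign conventions for $\Delta_b$ here make even the sign of the model nontrivial). The paper avoids this computation entirely: it proves (Proposition~\ref{p-gue150101}) that $\tau G\tau-\tau\hat N$ is a smoothing operator of order $3$, where $\hat N$ is the partial inverse of the conformal sublaplacian $2\Delta_b+\frac12 R$, and then imports the expansion $\hat N\delta_\zeta=\rho^{-2}+\omega_0$ with $\omega_0\in\mE(\rho^{-\varepsilon})$ from the CR positive mass theorem of Cheng--Malchiodi--Yang. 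If you pursue your route, the model computation must actually be carried out with the normalizations of Definition~\ref{defn:pprime} and of $\rho$, and the comparison-with-$\hat N$ argument is a cleaner substitute.
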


We now outline the main argument used in the proof of Theorem~\ref{t-gue140827}. Fix a point $\zeta\in M$, the Green's function of $\bigl(\oP_4^\prime\bigr)^{1/2}$ at $\zeta$ is given by 
\begin{equation}\label{e-gue150424}
G_\zeta=\bigl(\oP_4^\prime\bigr)^{-\frac{1}{2}}\tau\delta_\zeta\tau.
\end{equation} 
Using standard argument in spectral theory, we observe that
\begin{equation}\label{e-gue150424I}
\bigl(\oP_4^\prime\bigr)^{-\frac{1}{2}}=c\int^\infty_0t^{-\frac{1}{2}}\bigl(\oP_4^\prime+t+\pi\bigl)^{-1}dt
\end{equation}
on $(\ker\oP_4^\prime)^\perp\cap\hat\mP$, where $\hat{\mathcal{P}}$ is the space of $L^2$ CR pluriharmonic functions, $\pi\colon\hat{\mathcal{P}}\To{\rm Ker\,}\oP_4^\prime$ is the orthogonal projection, and $c^{-1}=\int^\infty_0t^{-\frac{1}{2}}(1+t)^{-1}dt$.  Theorem~\ref{t-gue140827} then follows from asymptotic expansions for $t^{-\frac{1}{2}}\bigl(\oP_4^\prime+t+\pi\bigl)^{-1}$. By using Boutet de Monvel--Sj\"ostrand's classical theorem for the Szeg{\H o} kernel~\cite{BoutetSjostrand1976}, we first show that $\oP_4^\prime=\tau E_2$ for $E_2$ a classical elliptic pseudodifferential operator on $M$ of order $2$. This allows us to apply classical theory of pseudodifferential operators to find a pseudodifferential operator $G_t$ of order $-2$ depending continuously on $t$ such that $(E_2+t)G_t=I+F_t$, where $F_t$ is a smoothing operator depending continuously on $t$ and $\abs{F_t(x,y)}_{C^m(M\times M)}\lesssim\frac{1}{1+t}$ for all $m\in\mathbb N$. Roughly speaking, $\tau G_t\tau$ is the leading term of the operator $\bigl(\oP_4^\prime+t+\pi\bigl)^{-1}$.  By carefully studying the principal symbol and $t$-behavior of $G_t$, we can show that $G:=c\int^\infty_0t^{-\frac{1}{2}}\tau G_{t}\tau$ is a smoothing operator of order $2$ with  $G\tau\delta_\zeta\tau=\rho^{-2}\mod\mE\left(\rho^{-1-\varepsilon}\right)$, for every $\varepsilon>0$.

This article is organized as follows.  In Section~\ref{sec:bg} we review some basic concepts from pseudohermitian geometry and the definitions of the $P^\prime$-operator and the $Q^\prime$-curvature.  In Section~\ref{sec:green} we use Theorem~\ref{t-gue140827} to show that $\bigl(\oP_4^\prime\bigr)^{1/2}$ satisfies a sharp Moser--Trudinger-type inequality.  In Section~\ref{sec:minimize} we prove Theorem~\ref{thm:minimizer}.  In Section~\ref{sec:example} we show that there is no pseudo-Einstein contact form on $S^1\times S^2$ for which $Q_4^\prime$ is constant.  The remaining sections are devoted to the proof of Theorem~\ref{t-gue140827}.  In Section~\ref{sec:hprelim} we review some basic concepts about pseudodifferential operators and Fourier integral operators.  In Section~\ref{sec:hkernel} we recall some properties of the orthogonal projection $\tau$ established in~\cite{Hsiao2014}.  In Section~\ref{sec:hsymbol} we establish some properties of the principal symbol of $\tau\Delta_b\tau$.  In Section~\ref{sec:hsqrt} we prove Theorem~\ref{t-gue140827}.

\subsection*{Acknowledgements}

The authors thank Po-Lam Yung for his careful reading of an early version of this article.  They also thank the Academia Sinica in Taipei and Princeton University for warm hospitality and generous support while this work was being completed.

\section{Some pseudohermitian geometry}
\label{sec:bg}

In this section we summarize some important concepts in pseudohermitian geometry as are needed to study the $P^\prime$-operator and the $Q^\prime$-curvature in dimension three.

Let $M^3$ be a smooth, oriented (real) three-dimensional manifold.  A \emph{CR structure} on $M$ is a one-dimensional complex subbundle $T^{1,0}\subset T_{\bC}M:= TM\otimes\bC$ such that $T^{1,0}\cap T^{0,1}=\{0\}$ for $T^{0,1}:=\overline{T^{1,0}}$.  Let $H=\Real T^{1,0}$ and let $J\colon H\to H$ be the almost complex structure defined by $J(V+\bar V)=i(V-\bar V)$.

Let $\theta$ be a \emph{contact form} for $(M^3,T^{1,0}M)$; i.e.\ $\theta$ is a nonvanishing real one-form such that $\ker\theta=H$.  Since $M$ is oriented, a contact form always exists, and is determined up to multiplication by a positive real-valued smooth function.  We say that $(M^3,T^{1,0}M)$ is \emph{strictly pseudoconvex} if the \emph{Levi form} $d\theta(\cdot,J\cdot)$ on $H\otimes H$ is positive definite for some, and hence any, choice of contact form $\theta$.  We shall always assume that our CR manifolds are strictly pseudoconvex.

A \emph{pseudohermitian manifold} is a triple $(M^3,T^{1,0}M,\theta)$ consisting of a CR manifold and a contact form.  The \emph{Reeb vector field} $T$ is the vector field such that $\theta(T)=1$ and $d\theta(T,\cdot)=0$.  A \emph{$(1,0)$-form} is a section of $T_{\bC}^\ast M$ which annihilates $T^{0,1}$.  An \emph{admissible coframe} is a nonvanishing $(1,0)$-form $\theta^1$ in an open set $U\subset M$ such that $\theta^1(T)=0$.  Let $\theta^{\bar 1}:=\overline{\theta^1}$ be its conjugate.  Then $d\theta=ih_{1\bar 1}\theta^1\wedge\theta^{\bar 1}$ for some positive function $h_{1\bar 1}$.  The function $h_{1\bar 1}$ is equivalent to the Levi form.

The \emph{connection form $\omega_1{}^1$} and the \emph{torsion form} $\tau_1=A_{11}\theta^1$ determined by an admissible coframe $\theta^1$ are uniquely determined by
\begin{align*}
d\theta^1 & = \theta^1\wedge\omega_1{}^1 + \theta\wedge\tau^1, \\
\omega_{1\bar 1} + \omega_{\bar 11} & = dh_{1\bar 1},
\end{align*}
where we use $h_{1\bar 1}$ to raise and lower indices as normal; e.g.\ $\tau^1=h^{1\bar 1}\tau_{\bar 1}$ for $h^{1\bar 1}=\left(h_{1\bar 1}\right)^{-1}$.  The connection forms determine the \emph{pseudohermitian connection $\nabla$} by
\[ \nabla Z_1 := \omega_1{}^1\otimes Z_1 \]
for $\{Z_1,Z_{\bar 1},T\}$ the dual basis to $\{\theta^1,\theta^{\bar 1},\theta\}$.  The \emph{scalar curvature} $R$ of $\theta$ is given by the expression
 \[ d\omega_1{}^1 = R\theta^1\wedge\theta^{\bar 1} \mod\theta . \]
A (real-valued) function $w\in C^\infty(M)$ is \emph{CR pluriharmonic} if locally $w=\Real f$ for some (complex-valued) function $f\in C^\infty(M,\bC)$ satisfying $Z_{\bar 1}f=0$.  Equivalently, $w$ is a CR pluriharmonic function if
\[ \nabla_1\nabla_1\nabla^1 w + iA_{11}\nabla^1 w = 0 \]
for $\nabla_1:=\nabla_{Z_1}$ (cf.\ \cite{Lee1988}).  We denote by $\mP$ the space of all CR pluriharmonic functions.

Take $\theta\wedge d\theta$ to be the volume form on $M$.  This induces a natural inner product $(\cdot,\cdot)$ on $C^\infty(M)$.  Let $L^2(M)$ and $\hat\mP$ denote the completions of $C^\infty(M)$ and $\mP$, respectively, with respect to this inner product.

The \emph{Paneitz operator} $P_4$ is the differential operator
\begin{align*}
P_4(w) & := 4\nabla^1\left(\nabla_1\nabla_1\nabla^1 w + iA_{11}\nabla^1 w\right) \\
& = \Delta_b^2w + T^2 - 4\Imaginary\nabla^1\left(A_{11}\nabla^1f\right)
\end{align*}
for $\Delta_b:=\nabla^1\nabla_1+\nabla^{\bar 1}\nabla_{\bar 1}$ the \emph{sublaplacian}.  Note in particular that $\mP\subset\ker P_4$.  A key property of the Paneitz operator is that it is CR covariant; if $\hat\theta=e^w\theta$, then $e^{2w}\hat P_4=P_4$ (cf.\ \cite{Hirachi1990}).

\begin{defn}
\label{defn:pprime}
Let $(M^3,T^{1,0}M,\theta)$ be a pseudohermitian manifold.  The \emph{$P^\prime$-operator} $P^\prime\colon\mP\to C^\infty(M)$ is defined by
\begin{equation}
\label{eqn:pprime}
\begin{split}
P_4^\prime f & = 4\Delta_b^2 f - 8\Imaginary\left(\nabla^\alpha(A_{\alpha\beta}\nabla^\beta f)\right) - 4\Real\left(\nabla^\alpha(R\nabla_\alpha f)\right) \\
& \quad + \frac{8}{3}\Real W_\alpha\nabla^\alpha f - \frac{4}{3}f\nabla^\alpha W_\alpha
\end{split}
\end{equation}
for $f\in\mP$, where $W_\alpha:=\nabla_\alpha R - i\nabla^\beta A_{\alpha\beta}$.
\end{defn}

In particular,
\begin{equation}\label{e-gue140811IV}
\begin{split}
&P_4^\prime f=4\Delta^2_bf+R\Delta_bf+\Delta_bRf+(L_1L_2+\ol L_1\ol L_2)f+(L_3+\ol L_3)f+rf,\\
&L_1, L_2, L_3\in C^\infty(M,T^{1,0}M),\ \ r\in C^\infty(M),\ \ f\in\mathcal{P}.
\end{split}
\end{equation}

A key property of the $P^\prime$-operator is its conformal covariance: Let $(M^3,T^{1,0}M,\theta)$ be a pseudohermitian manifold, let $w\in C^\infty(M)$, and set $\hat\theta=e^w\theta$.  Then
\begin{equation}
 \label{eqn:pprime_transformation}
 e^{2w}\hat P_4^\prime(u) = P_4^\prime(u) + P_4\left(uw\right)
\end{equation}
for all $u\in\mP$.  In particular, since $P_4$ is self-adjoint and annihilates CR pluriharmonic functions, \eqref{eqn:pprime_transformation} implies that the $P^\prime$-operator is conformally covariant, mod $\mP^\perp$.

A pseudohermitian manifold $(M^3,T^{1,0}M,\theta)$ is \emph{pseudo-Einstein} if $W_\alpha=0$ for $W_\alpha$ as in Definition~\ref{defn:pprime}.

\begin{defn}
 \label{defn:qprime}
 Let $(M^3,T^{1,0}M,\theta)$ be a pseudo-Einstein manifold.  The \emph{$Q^\prime$-curvature} is
 \begin{equation}
  \label{eqn:qprime}
  Q_4^\prime = 2\Delta_b R - 4\lv A\rv^2 + R^2 .
 \end{equation}
\end{defn}

A key property of the $Q^\prime$-curvature is its conformal covariance: Let $(M^3,T^{1,0}M,\theta)$ be a pseudo-Einstein manifold, let $w\in\mP$, and set $\hat\theta=e^w\theta$.  Hence $\hat\theta$ is pseudo-Einstein~\cite{Hirachi1990}.  Then
\begin{equation}
 \label{eqn:qprime_transformation}
 e^{2w}\hat Q_4^\prime = Q_4^\prime + P_4^\prime(w) + \frac{1}{2}P_4\left(w^2\right) .
\end{equation}
In particular, $Q_4^\prime$ behaves as the $Q$-curvature for $P_4^\prime$, mod $\mP^\perp$.

\section{The Moser--Trudinger inequality for the $P^\prime$-operator}
\label{sec:green}

A key step in our proof of Theorem~\ref{thm:minimizer} is to show that the $P^\prime$-operator satisfies the same sharp Moser--Trudinger-type inequality as its counterpart on the sphere.  This follows from the asymptotic expansion for the Green's function of $\bigl(\oP_4^\prime\bigr)^{1/2}$ given in Theorem~\ref{t-gue140827} and the general Adams-type theorem of Fontana and Morpurgo~\cite{FontanaMorpurgo2011}.

Given $k\in\bN$ and $q>0$, let $W^{k,q}$ denote the non-isotropic Sobolev space, given by the set of all functions $u$ such that $Z_1Z_2\cdots Z_ju\in L^q(M)$ for all $Z_j\in C^\infty(M,T^{1,0}M\oplus T^{0,1}M))$, $j=0,1,2,\dots,k$, 

\begin{thm}
\label{thm:cr_paneitz_adams}
Let $(M^3,T^{1,0}M,\theta)$ be a compact pseudo-Einstein three-manifold for which the $P^\prime$-operator is nonnegative with trivial kernel.  Then there exists a constant $C$ such that
\begin{equation}
\label{eqn:cr_paneitz_adams}
\log\fint_M e^{2(w-w_0)} \leq C + \frac{1}{16\pi^2}\int_M w\,\oP_4^\prime w
\end{equation}
for all $w\in W^{2,2}\cap\mP$.
\end{thm}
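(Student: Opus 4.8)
The plan is to deduce the Moser--Trudinger inequality \eqref{eqn:cr_paneitz_adams} from the precise asymptotics of the Green's function $G_\zeta$ for $\bigl(\oP_4^\prime\bigr)^{1/2}$ given in Theorem~\ref{t-gue140827}, together with the general Adams--Fontana--Morpurgo machinery of \cite{FontanaMorpurgo2011}. The key observation is that $\oP_4^\prime$ is a nonnegative, formally self-adjoint elliptic pseudodifferential operator of order $4$ acting on $\hat\mP$ with kernel exactly $\bR$, so its square root $T:=\bigl(\oP_4^\prime\bigr)^{1/2}$ is an elliptic operator of order $2$ on the codimension-one subspace $(\ker\oP_4^\prime)^\perp\cap\hat\mP$, and $\oP_4^\prime$-based quantities can be rewritten as $\int_M w\,\oP_4^\prime w = \int_M \lvert Tw\rvert^2$. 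Writing $w-w_0 = w - \fint_M w$ and noting $Tw$ is unchanged if we subtract a constant, it suffices to bound $\log\fint_M e^{2\phi}$ for $\phi\in(\ker\oP_4^\prime)^\perp\cap\hat\mP$ in terms of $\lVert T\phi\rVert_{L^2}^2$, with the sharp constant $\tfrac{1}{16\pi^2}$.

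First I would set up the representation $\phi = T^{-1}(T\phi)$, so that $\phi(x) = \int_M G(x,y)(T\phi)(y)\,\theta\wedge d\theta(y)$ where $G(x,y)$ is the Schwartz kernel of $T^{-1}=\bigl(\oP_4^\prime\bigr)^{-1/2}$ restricted to the orthogonal complement of the constants; by Theorem~\ref{t-gue140827} this kernel has pole of the exact type $\rho^{-2}$ modulo a remainder in $\mE(\rho^{-1-\varepsilon})$, where $\rho^4(z,t) = \lvert z\rvert^4 + t^2$ is the Heisenberg-type distance. The leading singularity $\rho^{-2}$ on the Heisenberg group $\bH^1$ (homogeneous dimension $Q=4$) is precisely the kernel whose exponential-integrability constant is the one appearing on the CR sphere $S^3$: this is where the constant $16\pi^2$ enters, and it matches the normalization in \cite{BransonFontanaMorpurgo2007}. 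I would then verify that the hypotheses of the abstract theorem of Fontana--Morpurgo apply: one needs the kernel $G(x,y)$ to be, up to a lower-order (uniformly controlled) error, of the form $c\,\rho(x,y)^{-2}$ with the correct normalizing constant, and one needs the underlying space to carry a suitable homogeneous structure — which is exactly what the $\mE(\rho^m)$ estimates and the CR normal coordinates provide locally, the global part being handled by the compactness of $M$ and smoothness of $G$ away from the diagonal.

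The main obstacle I anticipate is bookkeeping the passage from the local model on $\bH^1$ to the global operator: one must show that the error term $B_\zeta - \rho^{-2}\in\mE(\rho^{-1-\varepsilon})$ contributes only a bounded (not merely finite) perturbation to the Adams inequality, uniformly as $\zeta$ ranges over $M$, and that the projection $\tau$ appearing as $G_\zeta = \tau B_\zeta\tau$ does not degrade the singularity structure — here one uses that $\tau$ is (modulo smoothing) a Fourier integral operator of Szeg\H{o} type with the same characteristic variety, so conjugation by $\tau$ preserves the class $\mE(\rho^m)$ and the leading coefficient. A secondary technical point is that the inequality must be proven for all $w\in W^{2,2}\cap\mP$, not just smooth $w$; this follows by density of $\mP\cap C^\infty$ in $W^{2,2}\cap\mP$ together with the continuity of both sides of \eqref{eqn:cr_paneitz_adams} in the $W^{2,2}$-topology (the left side via the Moser--Trudinger embedding $W^{2,2}\hookrightarrow e^{L}$ which is itself part of the output). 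Once these points are in place, the inequality \eqref{eqn:cr_paneitz_adams} follows by applying \cite[the relevant Adams-type theorem]{FontanaMorpurgo2011} to the operator $T^{-1}$ with kernel bound $G(x,y)\le c\,\rho(x,y)^{-2}+O(\rho^{-1-\varepsilon})$.
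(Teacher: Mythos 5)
Your proposal follows essentially the same route as the paper: the paper likewise deduces \eqref{eqn:cr_paneitz_adams} from the Green's function asymptotics of Theorem~\ref{t-gue140827} (same leading singularity $\rho^{-2}$ as on the CR sphere, with the remainder losing a definite power of $\rho$) combined with the Adams-type theorem of Fontana--Morpurgo, first obtaining the quadratic-exponent inequality $\int_M\exp\bigl(16\pi^2(w-w_0)^2/\int w\,\oP_4^\prime w\bigr)\leq C$ and then passing to the linear form. The only step you leave implicit is this final reduction, which is the elementary estimate $2(w-w_0)\leq 16\pi^2(w-w_0)^2\bigl(\int w\,\oP_4^\prime w\bigr)^{-1}+\frac{1}{16\pi^2}\int w\,\oP_4^\prime w$.
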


\begin{proof}

From Theorem~\ref{t-gue140827} we see that the leading order term of the Green's function for $\oP_4^\prime$ is independent of $(M^3,T^{1,0}M,\theta)$; in particular, it has exactly the same leading order term as the Green's function for the $P^\prime$-operator on the standard CR three-sphere.  Furthermore, the next term in the asymptotic expansion of the Green's function involves a definite loss of power in the asymptotic coordination $\rho$.  Thus, by arguing analogously to the proof of~\cite[Theorem~2.1]{BransonFontanaMorpurgo2007}, we may apply the main result~\cite[Theorem~1]{FontanaMorpurgo2011} to conclude that there is a constant $C>0$ such that
\begin{equation}
\label{eqn:cr_paneitz_adams_unlinear}
\int_M \exp\left(16\pi^2\frac{\left(w-w_0\right)^2}{\int w\oP_4^\prime w}\right)\theta\wedge d\theta \leq C
\end{equation}
for all $f\in W^{2,2}\cap\mP$.  The desired inequality~\eqref{eqn:cr_paneitz_adams} is an immediate consequence of~\eqref{eqn:cr_paneitz_adams_unlinear} and the elementary estimate
\[ 0 \leq 16\pi^2\frac{\left(w-w_0\right)^2}{\int w\oP_4^\prime w} - 2\left(w-w_0\right) + \frac{1}{16\pi^2}\int_M w\oP_4^\prime w . \qedhere \]
\end{proof}

\begin{remark}
A few comments are in order to explain the above constants.  The convention used in~\cite{BransonFontanaMorpurgo2007} is that the sublaplacian is given by $-\Real\nabla^\gamma\nabla_\gamma$, which shows that our definition is $-2$ times theirs.  With this in mind, their formula~\cite[(1.30)]{BransonFontanaMorpurgo2007} for the $P^\prime$-operator shows that our definition is $4$ times theirs.  Finally, they integrate with respect to the Riemannian volume element on $S^3$, regarded as the unit ball in $\bR^4$, while we integrate with respect to $\theta\wedge d\theta$ for $\theta=\Imaginary\overline{\partial}\left(\lv z\rv^2-1\right)$; in particular, our volume form is $2$ times theirs.  Together, these normalizations account for the apparent difference between our constant in~\eqref{eqn:cr_paneitz_adams_unlinear} and the constant appearing in~\cite[(2.11)]{BransonFontanaMorpurgo2007}.  Note that $\theta$ has scalar curvature $R=2$, and hence $\oQ_4^\prime=4$.
\end{remark}

\section{Minimizing the functional $II$}
\label{sec:minimize}

Assuming the results of Section~\ref{sec:hsqrt}, we prove that smooth minimizers of the $II$-functional exist under natural positivity assumptions.  We first construct weak minimizers.

\begin{thm}
\label{thm:existence}
Let $(M^3,T^{1,0}M,\theta)$ be a compact pseudo-Einstein three-manifold such that $\int \oQ_4^\prime<16\pi^2$.  Suppose additionally that the $P_4^\prime$-operator is nonnegative with $\ker \oP_4^\prime=\bR$.  Then
\[ \inf_{w\in W^{2,2}\cap\mP} II[w] \]
is obtained by some function $w\in W^{2,2}\cap\mP$.
\end{thm}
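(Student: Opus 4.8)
The plan is to run the direct method of the calculus of variations on the functional $II$ restricted to $W^{2,2}\cap\mP$. First I would show that $II$ is bounded below and coercive on the closed subspace $\{w\in W^{2,2}\cap\mP : \fint_M w\,\theta\wedge d\theta = 0\}$, using the sharp Moser--Trudinger inequality of Theorem~\ref{thm:cr_paneitz_adams} together with the strict inequality $\int_M\oQ_4^\prime < 16\pi^2$. Indeed, writing $w = w_0 + (w-w_0)$ with $w_0 = \fint_M w\,\theta\wedge d\theta$, the functional $II$ is invariant under adding constants, so we may normalize $w_0 = 0$. Then the term $-\bigl(\int_M\oQ_4^\prime\bigr)\log\fint_M e^{2w}$ is controlled from below by $-\frac{\int_M\oQ_4^\prime}{16\pi^2}\int_M w\,\oP_4^\prime w - C\int_M\oQ_4^\prime$ via Theorem~\ref{thm:cr_paneitz_adams}, while the linear term $2\int_M\oQ_4^\prime w$ is absorbed using a Poincar\'e-type inequality $\left\lvert\int_M\oQ_4^\prime w\right\rvert \leq \epsilon\int_M w\,\oP_4^\prime w + C_\epsilon$, which holds because $\oP_4^\prime\geq 0$ has one-dimensional kernel $\bR$ and hence a spectral gap on the orthogonal complement of the constants (here embeddability enters, since by Theorem~\ref{t-gue140827} and the surrounding discussion $\oP_4^\prime$ is an elliptic pseudodifferential operator of order $2$, so $\int_M w\,\oP_4^\prime w$ is equivalent to the square of the $W^{1,2}$-seminorm on mean-zero functions). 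Combining these, $II[w] \geq \bigl(1 - \tfrac{\int\oQ_4^\prime}{16\pi^2} - \epsilon\bigr)\int_M w\,\oP_4^\prime w - C$, and choosing $\epsilon$ small the leading coefficient is positive; this gives both coercivity in the $W^{1,2}$-seminorm (equivalently, after the spectral-gap estimate, in $W^{2,2}$) and a finite infimum.

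Next I would take a minimizing sequence $\{w_k\}\subset W^{2,2}\cap\mP$ with $\fint w_k = 0$. By coercivity, $\{w_k\}$ is bounded in $W^{2,2}$, so after passing to a subsequence $w_k \rightharpoonup w$ weakly in $W^{2,2}$ for some $w\in W^{2,2}$, and $w_k \to w$ strongly in $W^{1,2}$ and in every $L^q$ by the compact Sobolev embeddings for the non-isotropic spaces. Since $\mP$ is a closed subspace (its $W^{2,2}$-closure is $W^{2,2}\cap\hat\mP$, and the pluriharmonicity condition is preserved under weak limits because it is a linear differential constraint), $w\in W^{2,2}\cap\mP$. The quadratic form $w\mapsto\int_M w\,\oP_4^\prime w$ is weakly lower semicontinuous (it is nonnegative and continuous, hence convex and l.s.c.), the linear term $2\int_M\oQ_4^\prime w$ passes to the limit by $L^2$-convergence, and the term $-\bigl(\int\oQ_4^\prime\bigr)\log\fint e^{2w_k} \to -\bigl(\int\oQ_4^\prime\bigr)\log\fint e^{2w}$ because $e^{2w_k}\to e^{2w}$ in $L^1$ by Moser--Trudinger-type equi-integrability (the uniform bound~\eqref{eqn:cr_paneitz_adams_unlinear} along the minimizing sequence upgrades $L^1$-convergence from a.e.\ convergence via Vitali). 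Therefore $II[w] \leq \liminf_k II[w_k] = \inf$, so $w$ is the desired minimizer.

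The main obstacle I anticipate is the lower-order absorption and the verification that the constant in Theorem~\ref{thm:cr_paneitz_adams} is \emph{exactly} $16\pi^2$, matching the threshold in~\eqref{eqn:q-bound}: the strict inequality $\int\oQ_4^\prime < 16\pi^2$ is what converts the borderline Moser--Trudinger estimate into a genuine coercivity statement with a strictly positive leading coefficient, so any slack in that constant would be fatal. This is precisely why Theorem~\ref{t-gue140827}, identifying the leading term of the Green's function of $\bigl(\oP_4^\prime\bigr)^{1/2}$ with the universal model on the CR three-sphere, is essential rather than cosmetic. A secondary technical point is ensuring that the Poincar\'e inequality $\left\lvert\int\oQ_4^\prime w\right\rvert\leq\epsilon\int w\,\oP_4^\prime w + C_\epsilon$ holds with the $\oP_4^\prime$-energy rather than merely the full $W^{2,2}$-norm; this requires knowing that $\oP_4^\prime$ restricted to mean-zero pluriharmonic functions is bounded below by a positive multiple of the sublaplacian, which follows from ellipticity of $\oP_4^\prime$ (Theorem~\ref{t-gue140827}) together with $\ker\oP_4^\prime = \bR$, but must be stated carefully since $\oP_4^\prime$ has order $2$ and not $4$.
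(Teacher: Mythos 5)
Your proposal is correct and follows the same overall strategy as the paper: the sharp Moser--Trudinger inequality of Theorem~\ref{thm:cr_paneitz_adams}, the strict bound $\int\oQ_4^\prime<16\pi^2$, and the spectral gap coming from $\ker\oP_4^\prime=\bR$ give boundedness below and coercivity, and the direct method then produces a weak minimizer in $W^{2,2}\cap\mP$. The one substantive difference is how you bound the minimizing sequence in $W^{2,2}$: you invoke the abstract ellipticity of $\oP_4^\prime$ as a second-order pseudodifferential operator (a G\aa rding-type inequality, so that the $\oP_4^\prime$-energy is equivalent, modulo an $L^2$-term absorbed via the spectral gap, to the non-isotropic $W^{2,2}$-seminorm on mean-zero elements of $\mP$), whereas the paper works directly from the local formula~\eqref{eqn:pprime}: integrating by parts gives $(\oP_4^\prime w,w)=4\int(\Delta_bw)^2$ plus curvature terms quadratic in $\nabla_bw$, and these are absorbed via the interpolation $\int\lv\nabla_bw\rv^2\leq\varepsilon\int(\Delta_bw)^2+\frac{1}{4\varepsilon}\lV w-w_0\rV_2^2$. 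Your route buys a cleaner statement at the cost of the pseudodifferential machinery --- which, note, is the content of Theorem~\ref{t-gue140814I} rather than Theorem~\ref{t-gue140827}, and requires embeddability --- while the paper's route is elementary once the explicit formula for $P_4^\prime$ is in hand. You are also more careful than the paper about the passage to the limit, in particular the convergence of $\log\fint_M e^{2w_k}$ via the uniform exponential integrability~\eqref{eqn:cr_paneitz_adams_unlinear} and Vitali's theorem, a point the paper leaves implicit.
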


\begin{proof}

Denote $k=\int \oQ_4^\prime$.  Recall that
\[ II[w] = (\oP_4^\prime w,w) + 2\int_M \oQ_4^\prime (w-w_0) - k\log\fint_M e^{2(w-w_0)} \]
for $w_0=\fint w$ the average value of $M$.  If $k\leq 0$, it follows immediately that
\[ II[w] \geq \left(\oP_4^\prime w,w\right) + 2\int_M \oQ_4^\prime (w-w_0) , \]
while if $k>0$, Theorem~\ref{thm:cr_paneitz_adams} implies that
\[ II[w] \geq \left(1-\frac{k}{16\pi^2}\right)\left(\oP_4^\prime w,w\right) + 2\int_M \oQ_4^\prime (w-w_0) - kC . \]
Together, these estimates imply that
\begin{equation}
\label{eqn:II_estimate}
II(w) \geq \left(1-\frac{k^+}{16\pi^2}\right)\left(\oP_4^\prime w,w\right) + 2\int_M \oQ_4^\prime (w-w_0) - C
\end{equation}
for $k^+=\max\{0,k\}$ and $C$ a positive constant depending only on $(M^3,T^{1,0}M,\theta)$.

Denote by $\lambda_1=\lambda_1(\oP_4^\prime)$ the first nonzero eigenvalue
\[ \lambda_1(\oP_4^\prime) = \inf\left\{\frac{(P_4^\prime w,w)}{\lV w\rV_2^2} \colon w\in W^{2,2}\cap\mP,\quad\int_M w=0 \right\} \]
of $\oP_4^\prime$.  By assumption, $\lambda_1>0$.  Together with~\eqref{eqn:II_estimate}, this shows that there are positive constants $c_1,c_2$ depending only on $(M^3,T^{1,0}M,\theta)$ such that
\begin{equation}
\label{eqn:qestimate}
II[w] \geq c_1\lV w-w_0\rV_2^2 - c_2 .
\end{equation}
In particular, $II$ is bounded below.

Let $\{w_k\}\subset\mP$ be a minimizing sequence of $II$, normalized so that $\lV w_k\rV_2=1$ for all $k\in\bN$.  Using~\eqref{eqn:II_estimate} and the local formula~\eqref{eqn:pprime} for $P_4^\prime$, it is easily seen that there is a positive constant $c_3$ depending only on $(M^3,T^{1,0}M,\theta)$ such that
\begin{equation}
\label{eqn:gammaqestimate}
\begin{split}
\left(1-\frac{k^+}{16\pi^2}\right)\int_M(\Delta_bw_k)^2 & \leq c_3\left|\int_M R\lv\nabla_bw_k\rv^2\right| + c_3\left|\int_M\Imaginary A_{\alpha\beta}\nabla^\alpha w_k\nabla^\beta w_k\right| \\
& \quad + 2\left|\int_M Q_4^\prime \left(w_k-(w_k)_0\right)\right| + c_3 .
\end{split}
\end{equation}
On the other hand, given any $\varepsilon>0$, it holds that
\[ \int_M\lv\nabla_bw_k\rv^2 = -\int_M w_k\Delta_bw_k \leq \varepsilon\int_M(\Delta_bw_k)^2 + \frac{1}{4\varepsilon}\lV w_k-(w_k)_0\rV_2^2 . \]
We may thus combine~\eqref{eqn:qestimate} and~\eqref{eqn:gammaqestimate} to conclude that $\{w_k\}$ is uniformly bounded in $W^{2,2}\cap\mP$.  Thus, by choosing a subsequence if necessary, we see that $w_k$ converges weakly in $W^{2,2}\cap\mP$ to a minimizer $w\in W^{2,2}\cap\mP$ of $II$.
\end{proof}

We next show that weak critical points of the $II$-functional are smooth.

\begin{thm}
\label{thm:regularity}
Let $(M^3,T^{1,0}M,\theta)$ be a compact three-dimensional pseudo-Einstein manifold.  Suppose that $w\in W^{2,2}\cap\mP$ is a critical point of the $II$-functional.  Then $w$ is smooth, and moreover, the contact form $\hat\theta:=e^w\theta$ is such that $\hat\oQ_4^\prime$ is constant.
\end{thm}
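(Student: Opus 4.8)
The strategy is the standard bootstrap argument for Euler--Lagrange equations, adapted to the fact that $II$ is defined only on $\mP$ and that $\oP_4^\prime = \tau P_4^\prime$ is an elliptic pseudodifferential operator rather than a differential operator. First I would derive the Euler--Lagrange equation: if $w \in W^{2,2}\cap\mP$ is a critical point, then for every $v \in W^{2,2}\cap\mP$ one has $\frac{d}{ds}\big|_{s=0} II[w+sv] = 0$, which yields
\[
\oP_4^\prime w + \oQ_4^\prime = k\,\frac{e^{2w}}{\fint_M e^{2w}} \mod \mP^\perp,
\]
where $k = \int_M \oQ_4^\prime$. Because $\oP_4^\prime$ maps $\mP$ to $\mP$ and $\oQ_4^\prime = \tau Q_4^\prime$, this is equivalent to the genuine equation $\oP_4^\prime w + \oQ_4^\prime = \tau\bigl(k\,e^{2w}/\fint e^{2w}\bigr)$ in $\hat\mP$. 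The key structural input is that $\oP_4^\prime$ is a formally self-adjoint elliptic $\psi$DO of order $2$ with kernel $\bR$ (stated in Section~\ref{sec:intro} and proved via Theorem~\ref{t-gue140827} and the Boutet de Monvel--Sj\"ostrand machinery in Sections~\ref{sec:hprelim}--\ref{sec:hsqrt}), so on $(\ker\oP_4^\prime)^\perp$ it admits a parametrix that gains two non-isotropic derivatives.

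The bootstrap then proceeds as follows. Since $w \in W^{2,2}\cap\mP \subset W^{2,2}$, the CR Sobolev embedding (the non-isotropic analogue, with homogeneous dimension $4$ for the contact structure on $M^3$) places $w$ in every $L^p$, and in fact the Moser--Trudinger inequality of Theorem~\ref{thm:cr_paneitz_adams} shows $e^{2w} \in L^p$ for all $p < \infty$. Hence the right-hand side $\tau\bigl(k\,e^{2w}/\fint e^{2w}\bigr)$ lies in $L^p$ for all $p$, using that $\tau$ is an order-zero $\psi$DO (it is the Szeg\H{o}-type projector onto $\hat\mP$, bounded on $L^p$ for $1<p<\infty$, cf.\ Section~\ref{sec:hkernel}). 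Elliptic regularity for $\oP_4^\prime$ on $(\ker\oP_4^\prime)^\perp \cap \hat\mP$ then gives $w \in W^{2,p}$ for all $p<\infty$, hence $w \in C^{1,\alpha}$ for all $\alpha<1$ by Sobolev embedding. Iterating: $e^{2w} \in C^{1,\alpha}$, so the right-hand side is in $C^{1,\alpha}$, so $w \in C^{3,\alpha}$; continuing, $w \in C^\infty$. One must be slightly careful that at each stage the equation is the projected one and that $\tau$ preserves the relevant H\"older/Sobolev scales, but this is exactly the content of the $\psi$DO calculus set up later in the paper.

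Once $w$ is smooth, $\hat\theta := e^w\theta$ is a smooth pseudo-Einstein contact form (by Hirachi's result, since $w\in\mP$), and the transformation formula~\eqref{eqn:intro_qprime_transformation}, in its projected form $e^{2w}\hat\oQ_4^\prime = \oQ_4^\prime + \oP_4^\prime w$, combined with the Euler--Lagrange equation, gives $e^{2w}\hat\oQ_4^\prime = k/\fint_M e^{2w}$, a constant. Equivalently one computes $\int_M \hat\oQ_4^\prime\,\hat\theta\wedge d\hat\theta = \int_M e^{2w}\hat\oQ_4^\prime\,\theta\wedge d\theta = k$, confirming consistency. I expect the main obstacle to be the first regularity step — promoting $w$ from $W^{2,2}$ to $W^{2,p}$ for large $p$ — because one needs the full strength of the fact that $\oP_4^\prime$, a priori only defined on $\mP$, extends to (or coincides with $\tau$ applied to) an honest elliptic $\psi$DO on $M$ with good mapping properties on $L^p$ and H\"older spaces; this is precisely why the paper invests Sections~\ref{sec:hprelim}--\ref{sec:hsqrt} in establishing $\oP_4^\prime = \tau E_2$. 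Given that machinery, the rest of the bootstrap is routine.
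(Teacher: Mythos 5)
Your overall strategy coincides with the paper's: derive the Euler--Lagrange equation \eqref{eqn:ii_euler}, show the nonlinearity $e^{2w}$ lies in $L^q$ for all $q$, and bootstrap through a parametrix for $\oP_4^\prime$ on $\hat\mP$ (the paper uses the operators $B_\ell, C_\ell$ of Theorem~\ref{t-gue150107} and the mapping property \eqref{e-gue150110bIII} on non-isotropic $W^{k,q}$ spaces, where you propose an isotropic Sobolev/H\"older iteration; either works once the mapping properties of $\tau B_\ell\tau$ are in hand, and you correctly defer that to the $\psi$DO machinery).

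There is, however, one genuine gap in the first step. You obtain $e^{2w}\in L^q$ from the Moser--Trudinger inequality of Theorem~\ref{thm:cr_paneitz_adams}, but that theorem requires $\oP_4^\prime$ to be nonnegative with trivial kernel, and Theorem~\ref{thm:regularity} assumes neither (it is stated for an arbitrary compact pseudo-Einstein three-manifold and an arbitrary $W^{2,2}\cap\mP$ critical point). The non-isotropic Sobolev embedding $W^{2,2}\hookrightarrow L^p$ alone does not give exponential integrability, so as written the step ``hence $e^{2w}\in L^p$ for all $p$'' has no valid source. The paper's route is the unconditional Theorem~\ref{t-gue150109}: since $w\in W^{2,2}$ one has $\Delta_b w\in L^2$, and the Adams-type inequality for the \emph{sublaplacian} (via the Green's function bound $\lv Q(x,y)\rv\lesssim\vartheta(x,y)^{-2}$ and Theorem~\ref{t-gue150109I}) yields $e^{c\lv w\rv^2}\in L^1$, hence $e^{2w}\in L^q$ for all $q>1$ with no positivity hypothesis on $\oP_4^\prime$. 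Substituting this for your appeal to Theorem~\ref{thm:cr_paneitz_adams} repairs the argument; in the application to Theorem~\ref{thm:minimizer} the positivity hypotheses do hold, so your version would suffice there, but not for the regularity theorem as stated. A smaller imprecision: $\oP_4^\prime=\tau E_2$ with $E_2\in L^2_{{\rm cl\,}}(M)$ an \emph{isotropic} order-$2$ operator whose leading part is $\Delta_b^2$, so the relevant parametrix gains (nearly) four non-isotropic derivatives, not two; this does not affect the convergence of the bootstrap, only the bookkeeping of how many iterations are needed.
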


\begin{proof}

It is readily seen that $w$ is a critical point of the $II$-functional if and only if $w$ is a weak solution to
\begin{equation}
\label{eqn:ii_euler}
P_4^\prime w + Q_4^\prime = \lambda e^{2w}\mod\mP^\perp .
\end{equation}
In particular, if $w$ is smooth, then~\eqref{eqn:qprime_transformation} implies that $\hat\oQ_4^\prime$ is constant. Now, we prove that $w$ is smooth. Fix $\ell\in\mathbb N$ sufficiently large and let $B_\ell$ and $C_\ell$ be as in Theorem~\ref{t-gue150107}. From \eqref{eqn:ii_euler}, we have 
\begin{equation}\label{e-gue150110b}
\tau B_\ell\tau(\lambda e^{2w})=\tau B_\ell\oP^\prime_4w+\tau B_\ell\tau Q_4^\prime=w+\tau C_\ell w+\tau B_\ell\tau Q^\prime_4.
\end{equation}
Note that 
\begin{equation}\label{e-gue150110bI}
\tau C_\ell w+\tau B_\ell\tau Q^\prime_4\in C^\ell(M).
\end{equation}
Since $w\in W^{2,2}$, we have $\Delta_bw\in L^2(M)$. From Theorem~\ref{t-gue150109}, we conclude that 
\[e^{c\abs{w}^2}\in L^1(M),\ \ c>0,\]
and hence 
\begin{equation}\label{e-gue150110bII}
\lambda e^{2w}\in L^q(M),\ \ \forall q>1.
\end{equation}
Since $\tau B_\ell\tau$ is a smoothing operator of order $4-\varepsilon$ for all $0<\varepsilon<1$, 
it holds that (see~\cite[Proposition~2.7]{HsiaoYung2014}) 
\begin{equation}\label{e-gue150110bIII}
\tau B_\ell\tau:W^{k,q}\To W^{k+1,q}, \quad\text{for all $q>1$ and all $k\in\bN_0$} .
\end{equation}
From \eqref{e-gue150110b}, \eqref{e-gue150110bII} and \eqref{e-gue150110bIII}, we obtain that 
\begin{equation}\label{e-gue150110ab}
w\in W^{1,q}, \quad\text{for all $q>1$} .
\end{equation}
From~\eqref{e-gue150110bII} and~\eqref{e-gue150110ab} it is easy to see that $\lambda e^{2w}\in W^{1,q}$ for all $q>1$. From this, \eqref{e-gue150110b} and~\eqref{e-gue150110bIII} we conclude that $w+\tau C_\ell w+\tau B_\ell\tau Q^\prime_4\in W^{2,q}$ for all $q>1$. Continuing in this way, we deduce that $w+\tau C_\ell w+\tau B_\ell\tau Q^\prime_4\in W^{k,q}$ for all $q>1$ and all $k\in\bN_0$ with $k\leq\ell$. Thus, $w\in W^{\ell,q}$, for all $q>1$. Since $\ell$ is arbitrary, we deduce that $w$ is smooth. 
\end{proof}

\begin{proof}[Proof of Theorem~\ref{thm:minimizer}]

By Theorem~\ref{thm:existence}, there is a minimizer $w\in W^{2,2}\cap\mP$ of the $II$-functional.  By Theorem~\ref{thm:regularity}, $w$ is smooth and the contact form $\hat\theta:=e^w\theta$ is such that $\hat\oQ_4^\prime$ is constant, as desired.
\end{proof}

\section{An example}
\label{sec:example}

Here we provide an example to show that minimizers of the $II$-functional, while they have $\oQ_4^\prime$ constant, need not have $Q_4^\prime$-constant.  More precisely, we will prove the following theorem.

\begin{thm}
\label{thm:s1s2_example}
Let $\Gamma$ be a nontrivial dilation of the Heisenberg group $\bH^1$ which fixes the origin $0\in\bH^1$.  Then $S^1\times S^2=(\bH^1\setminus\{0\})/\Gamma$ with its standard CR structure is such that the minimizer of the $II$-functional is unique up to an additive constant, and moreover, the corresponding contact form $\hat\theta$ has $\hat\oQ_4^\prime\equiv0$ but $\hat Q_4^\prime\not\equiv0$.
\end{thm}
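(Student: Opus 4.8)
The plan is to exhibit the natural pseudo-Einstein contact form on $S^1\times S^2$, compute its $Q^\prime$-curvature explicitly, and use the rotational symmetry of the resulting expression to show that $\oQ_4^\prime$ vanishes identically; the $II$-functional then degenerates to a positive quadratic form whose only minimizers are the constants.

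First I would fix coordinates: writing points of $\bH^1$ as $(z,t)$ and setting $z_2=t+i\lv z\rv^2$, one has $\rho^4=\lv z\rv^4+t^2=\lv z_2\rv^2$, so $\log\rho=\tfrac12\Real\log z_2$ is CR pluriharmonic on $\bH^1\setminus\{0\}$ and hence $\theta:=\rho^{-2}\theta_0=e^{-2\log\rho}\theta_0$ is pseudo-Einstein, where $\theta_0$ denotes the flat contact form. Since $\delta_\mu^\ast\theta=\theta$ for every dilation $\delta_\mu$, $\theta$ descends to a pseudo-Einstein contact form on $S^1\times S^2=(\bH^1\setminus\{0\})/\Gamma$; moreover the abelian group generated by all dilations and by the rotations $z\mapsto e^{i\vartheta}z$ acts on $(\bH^1\setminus\{0\},\theta)$ by pseudohermitian automorphisms and descends to an action of a two-torus $\bT^2$ on $(S^1\times S^2,\theta)$. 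A short computation shows that the orbit space is the interval $[0,\pi]$ with coordinate $\arg z_2$, and that $\theta\wedge d\theta$ pushes forward along $\arg z_2$ to a constant multiple of the Lebesgue measure $d(\arg z_2)$.

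Next I would compute $Q_4^\prime$. Applying the transformation law~\eqref{eqn:qprime_transformation} to $\theta=e^w\theta_0$ with $w=-2\log\rho$, and using that for $\theta_0$ the $Q^\prime$-curvature, the $P^\prime$-operator and the Paneitz operator are $0$, $4\Delta_b^2$ and $\Delta_b^2+T^2$, a direct calculation in which the terms involving $\log\rho$ cancel yields that $Q_4^\prime$ is a nonzero constant multiple of $(\lv z\rv^4-t^2)\rho^{-4}$, equivalently a nonzero constant multiple of $\cos(2\arg z_2)$ since $(\lv z\rv^4-t^2)\rho^{-4}=\bigl((\Imaginary z_2)^2-(\Real z_2)^2\bigr)/\lv z_2\rv^2=-\cos(2\arg z_2)$. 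In particular $Q_4^\prime$ is nonconstant and $\bT^2$-invariant. Because $\bT^2$ acts by pseudohermitian automorphisms it preserves $\mP$, the $L^2$ inner product and therefore the projection $\tau$, so $\oQ_4^\prime=\tau Q_4^\prime$ is a $\bT^2$-invariant element of $\mP$. Now any $\bT^2$-invariant CR pluriharmonic function on $S^1\times S^2$ is a function $g(\arg z_2)$ of $\arg z_2$ alone, and, being locally the real part of a CR function, it is pluriharmonic; since $\arg z_2=\Imaginary\log z_2$ is a nonconstant pluriharmonic function and a pluriharmonic function of a nonconstant pluriharmonic function must be affine, $g$ is affine, i.e.\ $\oQ_4^\prime=a\arg z_2+b$ for constants $a,b$. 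Pairing $Q_4^\prime-\oQ_4^\prime\in\mP^\perp$ against $1$ and against $\arg z_2$ gives $\lp Q_4^\prime,1\rp=\lp\oQ_4^\prime,1\rp$ and $\lp Q_4^\prime,\arg z_2\rp=\lp\oQ_4^\prime,\arg z_2\rp$; both left-hand sides vanish because, after pushing forward to $[0,\pi]$, they reduce to $\int_0^\pi\cos 2a\,da=0$ and $\int_0^\pi a\cos 2a\,da=0$. As $1$ and $\arg z_2$ are linearly independent, their Gram matrix is invertible, forcing $a=b=0$; thus $\oQ_4^\prime\equiv 0$, and in particular $\int_M\oQ_4^\prime=0<16\pi^2$.

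It remains to assemble the conclusion. The flat $S^1\times S^2$ is embeddable, its CR Paneitz operator is nonnegative, and $\theta$ has nonnegative scalar curvature which is not identically zero, so by \cite{ChanilloChiuYang2010,CaseYang2012} the operator $\oP_4^\prime$ is nonnegative with $\ker\oP_4^\prime=\bR$; hence all the hypotheses of Theorem~\ref{thm:minimizer} are satisfied. Since $\oQ_4^\prime\equiv 0$, the $II$-functional~\eqref{eqn:ii_functional} is simply $II(w)=(\oP_4^\prime w,w)$ on $\mP$, so the minimizers are exactly the elements of $\ker\oP_4^\prime=\bR$, that is, the constant functions. Therefore the minimizer is unique up to an additive constant, the corresponding contact forms are the rescalings $\hat\theta=e^c\theta$, and for these $\hat\oQ_4^\prime=e^{-2c}\tau Q_4^\prime=0$, while — because $P_4^\prime(c)=c\,P_4^\prime(1)=0$ and $P_4(c^2)=0$ under the pseudo-Einstein condition — one has $\hat Q_4^\prime=e^{-2c}Q_4^\prime$, which is a nonzero multiple of $\cos(2\arg z_2)$ and hence not identically zero.

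The substantive steps are the CR-geometric computation of $Q_4^\prime$ for $\theta=\rho^{-2}\theta_0$, the identification of the $\bT^2$-invariant CR pluriharmonic functions on $S^1\times S^2$, and the input from the literature that this flat CR manifold is embeddable with nonnegative CR Paneitz operator (which is what guarantees $\oP_4^\prime\geq 0$ with one-dimensional kernel); once these are in place the remaining argument is formal.
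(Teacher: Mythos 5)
Your overall strategy is sound and your conclusion matches the paper's, but the route you take to the key fact $\oQ_4^\prime\equiv0$ is genuinely different and contains a real gap. The paper never needs the explicit formula for $Q_4^\prime$ to see that it lies in $\mP^\perp$: writing $\theta_1=\rho^{-4}\theta_0=\Phi^\ast\theta_0$ for the CR inversion $\Phi$ gives the identity $P_4^\prime\log\rho^{-4}+\frac12P_4\log^2\rho^{-4}=0$, and feeding this into~\eqref{eqn:qprime_transformation} for $\theta=\rho^{-2}\theta_0$ yields $\rho^{-4}Q_4^\prime=-\frac12P_4\log^2\rho^{-2}$. Since $P_4$ is self-adjoint and kills $\mP$, this exhibits $Q_4^\prime$ as manifestly orthogonal to $\mP$ with respect to $\theta\wedge d\theta$, so $\oQ_4^\prime\equiv0$ with no symmetry analysis and no moment computations. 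Your approach instead requires the exact functional form $Q_4^\prime=c\cos(2\arg z_2)$ (which you defer entirely to an unperformed ``direct calculation'', and on which the vanishing of your two moments crucially depends) together with a classification of the $\bT^2$-invariant elements of $\hat\mP$.

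The genuine gap is in that classification. The principle you invoke --- ``a pluriharmonic function of a nonconstant pluriharmonic function must be affine'' --- is false for CR pluriharmonic functions in dimension three. Concretely, $v:=\arg z_2$ and $g(v):=-\log\sin v=\log\rho^2-2\Real\log z_1$ are both CR pluriharmonic and $\bT^2$-invariant on the open dense set $\{z\neq0\}\subset S^1\times S^2$ (each summand is the real part of a CR function there), yet $g$ is not affine. What actually happens is that the third-order pluriharmonicity condition applied to $g(\arg z_2)$ reduces to the ODE $g'''\sin\phi+2g''\cos\phi=0$, i.e.\ $g''\sin^2\phi=\mathrm{const}$, whose non-affine solutions are exactly the $-\log\sin\phi$ family above; these are excluded only by smoothness along the two circles $\{z=0\}\subset S^1\times S^2$, where $\sin(\arg z_2)$ vanishes. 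So your classification is true, but the one-line justification you give for it would also ``prove'' that $-\log\sin(\arg z_2)$ is affine, and the correct argument must use global regularity at $\{z=0\}$ (or, equivalently, a holomorphic-extension argument to the quotient of the Siegel domain). With that step repaired, the rest of your argument --- the pushforward of $\theta\wedge d\theta$ to a multiple of $d(\arg z_2)$, the vanishing moments, the identification of the minimizers of $II(w)=(\oP_4^\prime w,w)$ with $\ker\oP_4^\prime=\bR$, and the appeal to \cite{CaseYang2012} for $\oP_4^\prime\geq0$ with one-dimensional kernel --- is correct, though you should either verify the nonnegativity of the CR Paneitz operator you invoke or, as the paper does, cite \cite[Proposition~4.9]{CaseYang2012} directly from $R=2\lv z\rv^2/\rho^2\geq0$ and the pseudo-Einstein condition.
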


\begin{proof}

Let $\rho(z,t)=\left(\lv z\rv^4+t^2\right)^{1/4}$ be the usual pseudo-distance on $\bH^1$.  It is straightforward to check that the contact form $\theta_1=\rho^{-4}\theta_0$ on $\bH^1\setminus\{0\}$ is such that $\theta_1=\Phi^\ast\theta_0$ for
$\Phi(z,t)$ the CR inversion through the pseudo-sphere $\rho^{-1}(1)$.  In particular, $\theta_1$ is flat, and hence $\log\rho\in\mP$.  From~\eqref{eqn:qprime_transformation} it follows that
\begin{equation}
\label{eqn:qprime_inversion}
P_4^\prime\log\rho^{-4} + \frac{1}{2}P_4\log^2\rho^{-4} = 0 .
\end{equation}

Consider now the contact form $\theta:=\rho^{-2}\theta_0$.  It is clear that $\theta$ is invariant under the action of $\Gamma$, and hence $\theta$ descends to a well-defined contact form on $S^1\times S^2$.  Since $\log\rho\in\mP$, we know that $\theta$ is pseudo-Einstein.  From~\eqref{eqn:qprime_transformation} we see that the $Q^\prime$-curvature $Q_4^\prime$ of $\theta$ is
\begin{equation}
\label{eqn:qprime_s1s2}
\rho^{-4}Q_4^\prime = P_4^\prime\log\rho^{-2} + \frac{1}{2}P_4\log^2\rho^{-2} = -\frac{1}{2}P_4\log^2\rho^{-2} ,
\end{equation}
where the second equality uses~\eqref{eqn:qprime_inversion}.  Since the Paneitz operator $P_4$ is self-adjoint and $\mP\subset{\rm Ker\,}P_4$, it follows that $Q_4^\prime$ is orthogonal, with respect to $\theta\wedge d\theta$, to the CR pluriharmonic functions.  In particular, $\oQ_4^\prime\equiv0$.  Furthermore, one can compute directly from~\eqref{eqn:qprime_s1s2} that
\[ Q_4^\prime = 8\frac{\lv z\rv^4-t^2}{\lv z\rv^4+t^2} , \]
which is clearly not identically zero.

Finally, using Lee's formula for the change of the scalar curvature under a conformal change of contact form~\cite[Lemma~2.4]{Lee1988}, we compute that the scalar curvature $R$ of $\theta$ is
\[ R = 2\frac{\lv z\rv^2}{\rho^2} . \]
Since this is nonnegative and $\theta$ is pseudo-Einstein, $\oP_4^\prime$ is nonnegative with trivial kernel~\cite[Proposition~4.9]{CaseYang2012}.  Now, if $\hat\theta=e^u\theta$ is a pseudo-Einstein contact form on $S^1\times S^2$ for which $\hat\oQ_4^\prime\equiv0$, the transformation formula~\eqref{eqn:qprime_transformation} implies that $\oP_4^\prime u\equiv0$, whence $u$ is constant, as desired.
\end{proof}

\section{Preliminaries for pseudodifferential operators}\label{sec:hprelim}

We shall use the following notations: $\bR$ is the set of real numbers, $\bR_+:=\set{x\in\bR;\, x>0}$, $\ol\bR_+:=\set{x\in\bR;\, x\geq0}$, $\mathbb N=\set{1,2,\ldots}$, and $\mathbb N_0=\mathbb N\cup\set{0}$. An element $\alpha=(\alpha_1,\ldots,\alpha_n)$ of $\mathbb N_0^n$ is a \emph{multi-index}, the \emph{size} of $\alpha$ is $\abs{\alpha}=\alpha_1+\cdots+\alpha_n$, and the \emph{length} of $\alpha$ is $l(\alpha)=n$. For $m\in\mathbb N$, we write $\alpha\in\set{1,\ldots,m}^n$ if $\alpha_j\in\set{1,\ldots,m}$ for all $j=1,\ldots,n$. We say that $\alpha$ is strictly increasing if $\alpha_1<\alpha_2<\cdots<\alpha_n$.

Given a multi-index $\alpha$, we write $x^\alpha=x_1^{\alpha_1}\cdots x^{\alpha_n}_n$ for $x=(x_1,\ldots,x_n)$; we write $\pr^\alpha_x=\pr^{\alpha_1}_{x_1}\cdots\pr^{\alpha_n}_{x_n}$ for $\pr_{x_j}=\frac{\pr}{\pr x_j}$ and $\pr^\alpha_x=\frac{\pr^{\abs{\alpha}}}{\pr x^\alpha}$; we write $D^\alpha_x=D^{\alpha_1}_{x_1}\cdots D^{\alpha_n}_{x_n}$ for $D_x=\frac{1}{i}\pr_x$ and $D_{x_j}=\frac{1}{i}\pr_{x_j}$.

Let $z=(z_1,\ldots,z_n)$, $z_j=x_{2j-1}+ix_{2j}$, $j=1,\ldots,n$, be coordinates of $\Complex^n$.  
Given a multi-index $\alpha$, we write $z^\alpha=z_1^{\alpha_1}\cdots z^{\alpha_n}_n$ and $\ol z^\alpha=\ol z_1^{\alpha_1}\cdots\ol z^{\alpha_n}_n$; we write $\frac{\pr^{\abs{\alpha}}}{\pr z^\alpha}=\pr^\alpha_z=\pr^{\alpha_1}_{z_1}\cdots\pr^{\alpha_n}_{z_n}$, where $\pr_{z_j}=
\frac{\pr}{\pr z_j}=\frac{1}{2}(\frac{\pr}{\pr x_{2j-1}}-i\frac{\pr}{\pr x_{2j}})$ for all $j=1,\ldots,n$; similarly, we write 
$\frac{\pr^{\abs{\alpha}}}{\pr\ol z^\alpha}=\pr^\alpha_{\ol z}=\pr^{\alpha_1}_{\ol z_1}\cdots\pr^{\alpha_n}_{\ol z_n}$, where $\pr_{\ol z_j}=
\frac{\pr}{\pr\ol z_j}=\frac{1}{2}(\frac{\pr}{\pr x_{2j-1}}+i\frac{\pr}{\pr x_{2j}})$ for all $j=1,\ldots,n$.

Let $M$ be a smooth manifold. We denote by $\langle\,\cdot\,,\cdot\,\rangle$ the pointwise duality between $TM$ and $T^*M$.
We extend $\langle\,\cdot\,,\cdot\,\rangle$ bilinearly to $T_{\bC}M\times T_{\bC}^*M$.
Let $E$ be a $C^\infty$ vector bundle over $M$. The fiber of $E$ at $x\in M$ are denoted by $E_x$. Let $Y\subset M$ be an open set. The spaces of
smooth sections of $E$ over $Y$ and distributional sections of $E$ over $Y$ are denoted by $C^\infty(Y, E)$ and $\mathscr D'(Y, E)$, respectively.
Let $\mathscr E'(Y, E)$ be the subspace of $\mathscr D'(Y, E)$ whose elements have compact support in $Y$.
For $m\in\bR$, let $H^m(Y, E)$ denote the Sobolev space
of order $m$ of sections of $E$ over $Y$. Put
\begin{align*}
H^m_{\rm loc\,}(Y, E) & =\big\{u\in\mathscr D'(Y, E) \colon \varphi u\in H^m(Y, E) \text{ for all } \varphi\in C^\infty_0(Y)\big\}\,,\\
      H^m_{\rm comp\,}(Y, E) & =H^m_{\rm loc}(Y, E)\cap\mathscr E'(Y, E)\,.
\end{align*} 

Fix a smooth density of integration on $M$. If
$A\colon C^\infty_0(M,E)\To \mathscr D'(M,F)$
is continuous, we write $A(x, y)$ to denote the distributional kernel of $A$.
The following two statements are equivalent:
\begin{enumerate}
\item[(a)] $A$ is continuous as a mapping from $\mathscr E'(M,E)$ to $C^\infty(M,F)$.
\item[(b)] $A(x,y)\in C^\infty(M\times M,E_y\boxtimes F_x)$.
\end{enumerate}
If $A$ satisfies (a) or (b), we say that $A$ is smoothing. Let
$B\colon C^\infty_0(M,E)\to \mathscr D'(M,F)$ be a continuous operator. 
We write $A\equiv B$ if $A-B$ is a smoothing operator.

Let $H(x,y)\in\mathscr D'(M\times M,E_y\boxtimes F_x)$. We also denote by $h$ the unique continuous operator $H\colon C^\infty_0(M,E)\to\mathscr D'(M,F)$ with distribution kernel $H(x,y)$. We henceforth identify $H$ with $H(x,y)$. 

Recall the H\"ormander symbol spaces:

\begin{defn}\label{d-gue140329}
Let $M\subset\bR^N$ be an open set and let $m\in\bR$. $S^m_{1,0}(M\times\bR^{N_1})$ is the space of all $a\in C^\infty(M\times\bR^{N_1})$ such that for all compact $K\Subset M$ and all $\alpha\in\mathbb N^N_0$, $\beta\in\mathbb N^{N_1}_0$, there is a constant $C>0$ such that 
\[\abs{\pr^\alpha_x\pr^\beta_\theta a(x,\theta)}\leq C(1+\abs{\theta})^{m-\abs{\beta}} \quad\text{for all}\quad (x,\theta)\in K\times\bR^{N_1}.\]
Denote 
\[S^{-\infty}(M\times\bR^{N_1}):=\bigcap_{m\in\bR}S^m_{1,0}(M\times\bR^{N_1}).\]
Let $a_j\in S^{m_j}_{1,0}(M\times\bR^{N_1})$ for $j\in\bN_0$ with $m_j\to-\infty$ as $j\to\infty$. Then there exists $a\in S^{m_0}_{1,0}(M\times\bR^{N_1})$, unique modulo $S^{-\infty}(M\times\bR^{N_1})$, such that $a-\sum\limits^{k-1}_{j=0}a_j\in S^{m_k}_{1,0}(M\times\bR^{N_1})$ for all $k\in\{0,1,2,\dotsc\}$. 

If $a$ and $a_j$ have the properties above, we write $a\sim\sum^{\infty}_{j=0}a_j$ in $S^{m_0}_{1,0}(M\times\bR^{N_1})$. 

Let $S^m_{{\rm cl\,}}(M\times\bR^{N_1})$ be the space of all symbols $a(x,\theta)\in S^m_{1,0}(M\times\bR^{N_1})$ with 
\[\mbox{$a(x,\theta)\sim\sum\limits^\infty_{j=0}a_{m-j}(x,\theta)$ in $S^m_{1,0}(M\times\bR^{N_1})$},\]
with $a_k(x,\theta)\in C^\infty(M\times\bR^{N_1})$ positively homogeneous of degree $k$ in $\theta$; that is, $a_k(x,\lambda\theta)=\lambda^ka_k(x,\theta)$ for all $\lambda\geq1$ and all $\abs{\theta}\geq1$. 

By using partition of unity, we extend the definitions above to the cases when $M$ is a smooth manifold and when 
we replace $M\times\bR^{N_1}$ by $T^*M$.
\end{defn}

Let $\Omega\subset M^3$ be an open coordinate patch. Let
$a(x, \xi)\in S^k_{1,0}(T^*\Omega)$. We define
\[A(x, y)=\frac{1}{(2\pi)^{3}}\int\! e^{i<x-y,\xi>}a(x,\xi)d\xi\]
as an oscillatory integral.  One can show that
\[A\colon C^\infty_0(\Omega)\To C^\infty(\Omega)\]
is continuous and has a unique continuous extension $A\colon\mathscr E'(\Omega)\to\mathscr D'(\Omega)$.

\begin{defn} \label{d:ss-pseudo}
Let $k\in\bR$. A classical pseudodifferential operator of order $k$ on $M$ is a continuous linear map
$A\colon C^\infty(M)\To\mathscr D'(M)$ such that on every open coordinate patch $\Omega$, if we consider $A$ as a continuous operator 
\[ A\colon C^\infty_0(\Omega)\To C^\infty(\Omega) , \]
then the distributional kernel of $A$ is
\[ A(x, y)=\frac{1}{(2\pi)^3}\int\! e^{i<x-y,\xi>}a(x, \xi)d\xi\]
with $a\in S^k_{{\rm cl\,}}(T^*\Omega)$. We call $a(x, \xi)$ the symbol of $A$. We write
$L^k_{{\rm cl\,}}(M)$
to denote the space of classical pseudodifferential operators of order $k$ on $M$.
\end{defn}

\section{The distributional kernel of $\tau$}\label{sec:hkernel}

In this section, we review some results in~\cite{Hsiao2014} about the orthogonal projection $\tau\colon L^2\to L^2\cap\mP$ which are needed in the proof of our main result. 

Let $\langle\,\cdot\,|\,\cdot\,\rangle$ be the Hermitian inner product on $T_{\bC}M$ given by 
\[ \langle Z_1 | Z_2 \rangle=-\frac{1}{2i}\langle\,d\theta\,,\,Z_1\wedge\overline{Z}_2\,\rangle \quad\text{for all $Z_1, Z_2\in T^{1,0}M$} . \]
The Hermitian metric $\langle\,\cdot\,|\,\cdot\,\rangle$ on $T_{\bC}M$ induces a Hermitian metric $\langle\,\cdot\,|\,\cdot\,\rangle$ on $T_{\bC}^*M$. Take $\theta\wedge d\theta$ to be the volume form on $M$, we then get natural inner product on $\Omega^{0,1}(M):=C^\infty(M,T^{*0,1}M)$ induced by $\theta\wedge d\theta$ and $\langle\,\cdot\,|\,\cdot\,\rangle$, where $T^{*0,1}M$ denotes the bundle of $(0,1)$ forms of $M$.  We denote this inner product by $(\,\cdot\,,\cdot\,)$ and denote the corresponding norm by $\norm{\cdot}$. Let $L^2_{(0,1)}(M)$ denote the completion of $\Omega^{0,1}(M)$ with respect to $(\,\cdot\,,\cdot\,)$. Let $\ddbar_b\colon C^\infty(M)\To\Omega^{0,1}(M)$ be the tangential Cauchy-Riemann operator. We extend $\ddbar_b$ to $L^2$ by $\ddbar_b\colon{\rm Dom\,}\ddbar_b\to L^2_{(0,1)}(M)$, where
\[ {\rm Dom\,}\ddbar_b:=\set{u\in L^2(M)\colon \, \ddbar_bu\in L^2_{(0,1)}(M)} . \]
Let $\ol{\pr}^*_b\colon {\rm Dom\,}\ol{\pr}^*_b\To L^2(M)$ be the $L^2$ adjoint of $\ddbar_b$. The Kohn Laplacian is given by
\begin{equation}\label{e-Box_b}
\begin{split}
&\Box_b:=\ddbar^{*}_b\ddbar_b\colon {\rm Dom\,}\Box_b\To L^2(M),\\
&{\rm Dom\,}\Box_b=\set{u\in L^2(M)\colon u\in{\rm Dom\,}\ddbar_b, \ddbar_bu\in{\rm Dom\,}\ol{\pr}^*_b}.
\end{split}
\end{equation}
Note that $\Box_b$ is self-adjoint.

The orthogonal projection $S\colon L^2(M)\to{\rm ker\,}\ddbar_b={\rm Ker\,}\Box_b$ is the \emph{Szeg\H{o} projection}. From now on, we assume that $M$ is embeddable. The follow facts are shown by the second-named author; see~\cite[Theorem 1.2 and Remark 1.4]{Hsiao2014}.

\begin{thm}\label{t-gue140812}
With the assumptions and notations above, we have 
\begin{equation}\label{e-gue140812a}
\tau=S+\ol S+F,
\end{equation}
where $F$ is a smoothing operator. Moreover, the kernel $\tau(x,y)\in\mathscr D'(M\times M)$ of $\tau$ satisfies 
\begin{equation}\label{e-gue140812aI}
\tau(x, y)\equiv\int^{\infty}_{0}e^{i\varphi(x, y)s}a(x, y, s)ds+\int^{\infty}_{0}e^{-i\ol\varphi(x, y)s}\ol a(x, y, s)ds,
\end{equation}
where
\begin{equation}\label{e-gue140812aIII}\begin{split}
&a(x, y, s)\in S^{1}_{{\rm cl\,}}\left(M\times M\times(0, \infty)\right), \\
&\mbox{$a(x, y, s)\sim\sum\limits^\infty_{j=0}a_j(x, y)s^{1-j}$ in $S^1_{1,0}\left(M\times M\times(0,\infty)\right)$},\\
&a_j(x,y)\in C^\infty(M\times M) \quad\text{for all $j\in\bN_0$},\\
&a_0(x,x)=\frac{1}{2}\pi^{-n} \quad\text{for all $x\in M$},
\end{split}\end{equation}
and
\begin{equation}\label{e-gue140812aII}
\begin{split}
&\varphi\in C^\infty(M\times M),\ \ {\rm Im\,}\varphi(x, y)\geq0,\ \ d_x\varphi|_{x=y}=-\theta(x),\\
&\varphi(x,y)=-\ol\varphi(y,x),\\
&\mbox{$\varphi(x,y)=0$ if and only if $x=y$},\\
&\mbox{$\sigma_{\Box_b}(x,\varphi'_x(x,y))$ vanishes to infinite order on $x=y$}.
\end{split}
\end{equation}
Here $\sigma_{\Box_b}$ denotes the principal symbol of $\Box_b$.
\end{thm}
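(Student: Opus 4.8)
The plan is to deduce Theorem~\ref{t-gue140812} from the Boutet de Monvel--Sj\"ostrand parametrix for the Szeg\H{o} kernel together with a soft operator-theoretic argument that realizes $\tau$ as $S+\ol S$ modulo a smoothing operator. First I would collect the analytic inputs. Since $(M^3,T^{1,0}M)$ is compact, strictly pseudoconvex and \emph{embeddable}, $\ddbar_b$, and hence $\Box_b=\ddbar^*_b\ddbar_b$, have closed range (in dimension three this is exactly where embeddability is used), so the Szeg\H{o} projection $S\colon L^2(M)\to\ker\ddbar_b=\ker\Box_b$ is well defined. The characteristic set of $\Box_b$ is the disjoint union of two half-line subbundles $\Sigma^+\sqcup\Sigma^-$ of $T^*M\setminus 0$ spanned by $\pm\theta$, and by the classical theorem of Boutet de Monvel and Sj\"ostrand~\cite{BoutetSjostrand1976} the kernel of $S$ is a complex Fourier integral distribution associated to $\Sigma^+$ of the form $S(x,y)\equiv\int_0^\infty e^{i\varphi(x,y)s}a(x,y,s)\,ds$, where $a$ and $\varphi$ have precisely the properties listed in~\eqref{e-gue140812aIII} and~\eqref{e-gue140812aII}: in particular $\mathrm{Im}\,\varphi\geq0$, $\varphi$ vanishes exactly on the diagonal, $d_x\varphi|_{x=y}=-\theta$, $\varphi(x,y)=-\ol\varphi(y,x)$, $\sigma_{\Box_b}(x,\varphi'_x(x,y))$ vanishes to infinite order on $x=y$, and $a_0(x,x)=\tfrac12\pi^{-n}$ for the normalization in which $\theta\wedge d\theta$ is the volume form. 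The orthogonal projection $\ol S$ onto the space $\overline{\ker\ddbar_b}$ of conjugate CR functions is associated to $\Sigma^-$, with kernel $\ol S(x,y)=\overline{S(x,y)}=\int_0^\infty e^{-i\ol\varphi(x,y)s}\,\ol a(x,y,s)\,ds$. The key microlocal point is that $\mathrm{WF}'(S)$ lies over $\Sigma^+\times\Sigma^+$ while $\mathrm{WF}'(\ol S)$ lies over $\Sigma^-\times\Sigma^-$.

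Next I would pin down the target space $\hat\mP$. On a three-dimensional CR manifold a real function is CR pluriharmonic precisely when it is \emph{locally} the real part of a CR function, so the complexification of $\hat\mP$ contains both the Hardy space $\cH_b:=L^2(M)\cap\ker\ddbar_b$ and its conjugate $\overline{\cH_b}$. Conversely, using the closed-range property of $\ddbar_b$ to solve the relevant equations and patch local primitives, I would show that $\hat\mP$ coincides with the $L^2$-closure of $\cH_b+\overline{\cH_b}$ up to a finite-dimensional subspace — the obstruction to globalizing local primitives, governed by a finite-dimensional cohomology group of the embeddable three-manifold. Consequently $\tau$ differs from the orthogonal projection onto $\overline{\cH_b+\overline{\cH_b}}$ by a finite-rank, hence smoothing, operator.

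Finally I would compute that projection modulo smoothing. Because $\mathrm{WF}'(S)$ and $\mathrm{WF}'(\ol S)$ are carried by the disjoint cones $\Sigma^+$ and $\Sigma^-$, the wavefront-set composition calculus shows that $S\ol S$ and $\ol S\,S$ are smoothing. Hence $A:=S+\ol S$ is self-adjoint with $A^2-A=S\ol S+\ol S\,S$ smoothing, and an elementary functional-calculus argument — writing $A-\Pi_A=g(A)(A^2-A)$ for a suitable bounded Borel function $g$ and the spectral projection $\Pi_A$ of $A$ associated to $[\tfrac12,\infty)$ — shows that $A$ agrees modulo a smoothing operator with the honest orthogonal projection $\Pi_A$, whose range is, up to a finite-dimensional space, the closed span $\overline{\cH_b+\overline{\cH_b}}$. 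Combining with the previous step gives $\tau=S+\ol S+F$ with $F$ smoothing, which is~\eqref{e-gue140812a}; substituting the Boutet de Monvel--Sj\"ostrand expansions for $S$ and for $\ol S(x,y)=\overline{S(x,y)}$ then yields~\eqref{e-gue140812aI} together with the symbol and phase conditions~\eqref{e-gue140812aIII}--\eqref{e-gue140812aII}.

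I expect the main obstacle to lie in the middle step: identifying $\hat\mP$ precisely enough — in particular verifying that it differs from $\overline{\cH_b+\overline{\cH_b}}$ only by a finite-dimensional space — which genuinely requires the closed-range theorem for $\ddbar_b$ on an embeddable three-manifold and a careful transition from the local to the global description of CR pluriharmonic functions. The first step rests on the (deep but classical) Boutet de Monvel--Sj\"ostrand parametrix, which I would take as a black box, and the last step is essentially formal once one works consistently modulo smoothing operators.
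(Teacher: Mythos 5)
First, note that the paper does not prove Theorem~\ref{t-gue140812} at all: it is imported verbatim from \cite[Theorem~1.2 and Remark~1.4]{Hsiao2014}, so the comparison is really with the proof in that reference. Your architecture --- Boutet de Monvel--Sj\"ostrand parametrix for $S$, identification of $\hat\mP$ (complexified) with $\overline{\cH_b+\overline{\cH_b}}$ up to a finite-dimensional space, and then an operator-theoretic argument showing that the orthogonal projection onto this sum is $S+\ol S$ modulo smoothing --- is consistent with the strategy of the cited proof, and the first step (including $\ol S(x,y)=\overline{S(x,y)}$ and the listed properties of $\varphi$ and $a$) is a legitimate black box. But two places in your sketch carry essentially all of the mathematical content and are not yet proofs.

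The middle step is not merely ``careful'': the statement that the $L^2$-closure of $\mP$ differs from $\overline{\mathrm{Ran}\,S+\mathrm{Ran}\,\ol S}$ by a finite-dimensional subspace of $C^\infty(M)$ \emph{is} the main theorem of \cite{Hsiao2014}. Your \v{C}ech/$H^1(M,\bR)$ patching argument is the right picture for \emph{smooth} CR pluriharmonic functions (using that a real-valued CR function on a connected set is constant), but you must then pass to $L^2$-closures, and for that you need the sum $\cH_b+\overline{\cH_b}$ to be closed and the decomposition $u=f+\ol g$ to be bounded; both of these hinge on $S\ol S$ being compact and on $\mathrm{Ran}\,S\cap\mathrm{Ran}\,\ol S$ being the constants, and neither is addressed. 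The same omission undermines the ``essentially formal'' last step. Since $A=S+\ol S$ vanishes on $V^\perp$ for $V:=\overline{\cH_b+\overline{\cH_b}}$ and $\sigma_{\mathrm{ess}}(A)\subset\{0,1\}$, the spectral projection $\Pi_A=1_{[1/2,\infty)}(A)$ differs from the projection onto $V$ by $1_{(0,1/2)}(A)$, which is a priori of \emph{infinite} rank: to exclude an infinite cluster of eigenvalues of $A|_V$ in $(0,1/2)$ you must show $A|_V=I_V+(\text{compact})$, which again requires the closedness of the sum. Separately, $A-\Pi_A=g(A)(A^2-A)$ is immediately only \emph{compact}; to conclude it is \emph{smoothing} you should use that $g(A)$ commutes with the smoothing operator $A^2-A$, so that $A-\Pi_A$ maps $H^{-N}\to L^2$ and $L^2\to H^N$ for every $N$, and then interpolate. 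Finally, replacing the projection onto $V$ by $\tau$ itself requires knowing that the finite-dimensional discrepancy is spanned by smooth functions, which must come out of the middle step. With these points supplied the argument closes, but as written the proposal defers precisely the parts of \cite{Hsiao2014} that make the theorem true.
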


We need the following fact about the Szeg\H{o} kernel (cf.\ \cite{BoutetSjostrand1976,Hsiao2010}).

\begin{thm}\label{t-gue140812I}
With the assumptions and notations above, the distributional kernel of $S$ satisfies 
\[S(x, y)\equiv\int^{\infty}_{0}e^{i\varphi(x, y)s}a(x, y,s)ds\]
where $\varphi(x,y)\in C^\infty(M\times M)$ and $a(x, y, s)\in S^{1}_{{\rm cl\,}}\left(M\times M\times(0,\infty)\right)$ are as in Theorem~\ref{t-gue140812}. 
\end{thm}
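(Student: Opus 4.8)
The statement is, in essence, the theorem of Boutet de Monvel and Sj\"ostrand \cite{BoutetSjostrand1976} on the microlocal structure of the Szeg\H{o} kernel on an embeddable strictly pseudoconvex CR manifold, refined so that the phase $\varphi$ and amplitude $a$ are the very ones appearing in Theorem~\ref{t-gue140812}. The plan is therefore not to redo the parametrix construction for $\Box_b$ from scratch, but to read the formula for $S$ off from the description of $\tau$ in Theorem~\ref{t-gue140812} by separating the Szeg\H{o} projection from its conjugate via a wavefront-set argument.

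First I recall the relevant geometry. Since $M$ is embeddable and strictly pseudoconvex of dimension three, $\Box_b$ has closed range, $S$ is the orthogonal projection onto $\Ker\Box_b=\Ker\ddbar_b$, and the characteristic variety of $\Box_b$ is the disjoint union $\Sigma^+\sqcup\Sigma^-$ of the two rays $\Sigma^\pm=\set{(x,\lambda\theta(x))\colon \pm\lambda>0}\subset T^*M\setminus0$. The Boutet de Monvel--Sj\"ostrand parametrix exhibits $S$ as a complex Fourier integral operator whose twisted wavefront set $\mathrm{WF}'(S)$ is the positive diagonal $\set{(x,\lambda\theta(x);x,\lambda\theta(x))\colon \lambda>0}$ and, dually, $\mathrm{WF}'(\ol S)$ is the negative diagonal, while $F$ in \eqref{e-gue140812a} is smoothing. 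Thus $\tau=S+\ol S+F$ is, modulo smoothing operators, a splitting into two pieces whose wavefront sets lie on the disjoint cones $\Sigma^+$ and $\Sigma^-$.

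Next I compare this with the oscillatory-integral description \eqref{e-gue140812aI} of $\tau$. The properties $\Imaginary\varphi\geq0$, $\varphi(x,y)=0\iff x=y$, and $d_x\varphi|_{x=y}=-\theta(x)$ (hence $d_y\varphi|_{x=y}=\theta(y)$) imply, by complex stationary phase in the $s$-variable, that $S':=\int_0^\infty e^{i\varphi(x,y)s}a(x,y,s)\,ds$ is smooth off the diagonal and has $\mathrm{WF}'(S')$ contained in the positive diagonal, and symmetrically that $\ol S':=\int_0^\infty e^{-i\ol\varphi(x,y)s}\ol a(x,y,s)\,ds$ has $\mathrm{WF}'(\ol S')$ contained in the negative diagonal. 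Since \eqref{e-gue140812aI} says $\tau\equiv S'+\ol S'$, subtracting $\tau\equiv S+\ol S$ yields a distribution $S-S'=\ol S'-\ol S$ that is smooth off the diagonal, and decomposing it according to the two disjoint closed cones forces $S\equiv S'$; this is the claimed formula. A minor consistency check is that $a$ is a classical symbol of order $1$ with leading term $a_0(x,x)=\tfrac12\pi^{-n}\in\bR$, so that $\ol a$ is genuinely the amplitude of $\ol S$ and the diagonal leading terms of $S$ and $\ol S$ add up to that of $\tau$, in agreement with the normalization of the Szeg\H{o} kernel in \cite{Hsiao2010}.

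The one point that genuinely requires care is the assertion that a \emph{single} phase function $\varphi$ serves simultaneously for $S$ and for the $\tau$-expansion: the bare Boutet de Monvel--Sj\"ostrand theorem determines $\varphi$ only up to the usual equivalence of complex phase functions parametrizing the positive Lagrangian attached to $\Sigma^+$, so a priori the phase in \eqref{e-gue140812aI} and ``the'' Szeg\H{o} phase could differ by such an equivalence. This is exactly why the statement is phrased as quoting $\varphi$ and $a$ from Theorem~\ref{t-gue140812}: the proof of that theorem in \cite{Hsiao2014} produces $\varphi$ and $a$ precisely as the phase and symbol of a parametrix for $S$ itself before assembling $\tau=S+\ol S+F$, so the two phases are literally the same object and no ambiguity arises. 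Granting Theorem~\ref{t-gue140812}, then, the only remaining work is the wavefront-set bookkeeping described above, which I expect to be routine.
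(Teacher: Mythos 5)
The paper offers no proof of this statement at all: it is quoted as a known fact with the citation ``(cf.\ \cite{BoutetSjostrand1976,Hsiao2010})'', the point being that the phase $\varphi$ and amplitude $a$ in Theorem~\ref{t-gue140812} are by construction those of the Boutet de Monvel--Sj\"ostrand parametrix for $S$. Your derivation is therefore a genuinely different (and more self-contained) route: you take Theorem~\ref{t-gue140812} as given and recover the formula for $S$ alone by a wavefront-set separation. The argument is correct: writing $\tau\equiv S'+\ol S'$ for the two oscillatory integrals and subtracting $\tau=S+\ol S+F$ gives $S-S'\equiv\ol S'-\ol S$; since $\mathrm{WF}'(S)$ and $\mathrm{WF}'(S')$ both lie over $\mathrm{diag}(\Sigma^-\times\Sigma^-)$ (the former by \cite{Hsiao2010}, the latter by stationary phase using $d_x\varphi|_{x=y}=-\theta(x)$, $\Imaginary\varphi\geq0$, and $\varphi=0$ only on the diagonal), while the right-hand side lives over the disjoint cone $\mathrm{diag}(\Sigma^+\times\Sigma^+)$, the common distribution has empty wavefront set and is smooth. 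You are also right to flag, and correctly resolve, the only delicate point --- that a single phase serves for both $S$ and the $\tau$-expansion --- by noting that in \cite{Hsiao2014} the expansion of $\tau$ is assembled \emph{from} the Szeg\H{o} parametrix, so the phases coincide by construction. The one caveat worth stating is that your argument inputs $\mathrm{WF}'(S)=\mathrm{diag}(\Sigma^-\times\Sigma^-)$ as an external fact (it is quoted later in Section~\ref{sec:hsymbol} of the paper), and that the logical order in the source literature is the reverse of yours; neither affects correctness here, since both ingredients are independently available.
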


\section{The principal symbol of $\tau\Delta_b$ on $\mathcal{P}$}\label{sec:hsymbol}

It is well-known that $\Delta_b$ is a subelliptic operator.  However, if we restrict $\Delta_b$ to $\mP$, it is equivalent to an elliptic pseudodifferential operator.

\begin{thm}\label{t-gue140813}
There is a classical elliptic pseudodifferential operator $E_1\in L^1_{{\rm cl\,}}(M)$ with real-valued principal symbol such that 
\[\tau\Delta_b\tau=\tau E_1\tau\ \ \mbox{on $\mathscr D'(M)$}.\]
In particular, $\tau\Delta_b=\tau E_1$ on $\mathcal{P}$. 
\end{thm}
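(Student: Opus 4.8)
The plan is to use the explicit description of the kernel of $\tau$ from Theorem~\ref{t-gue140812} to show that, modulo smoothing operators, $\tau\Delta_b\tau$ acts on the range of $\tau$ as a classical pseudodifferential operator. First I would recall from Theorem~\ref{t-gue140812} that $\tau = S + \ol S + F$ with $F$ smoothing, so that $\tau\Delta_b\tau \equiv S\Delta_b S + S\Delta_b\ol S + \ol S\Delta_b S + \ol S\Delta_b\ol S$. The key computational input is that on the range of the Szeg\H{o} projection $S$ the operator $\Delta_b$ behaves well: since $\Delta_b = \ddbar_b^*\ddbar_b + \ddbar_b\ddbar_b^* - iT$ (up to lower order and a multiple of $T$, in the appropriate convention in dimension three) and $\ddbar_b S = 0$, one finds $\Delta_b S \equiv (\text{something built from }\ddbar_b^*\ddbar_b\text{ and }T)S$. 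The essential point is that on holomorphic functions $\Delta_b$ is, modulo smoothing, a first-order operator in the "good" direction; concretely, one uses the stationary phase/Fourier integral representation $S(x,y)\equiv\int_0^\infty e^{i\varphi(x,y)s}a(x,y,s)\,ds$ and applies $\Delta_b$ in the $x$-variable, noting that $d_x\varphi|_{x=y} = -\theta(x)$ is non-characteristic, so that $\Delta_b$ hitting the phase produces a factor homogeneous of degree one in $s$ rather than degree two (the degree-two contributions $\langle d_x\varphi, d_x\varphi\rangle_{\Delta_b}$ involve the symbol of $\Delta_b$ evaluated on $\varphi'_x$, which by the last line of~\eqref{e-gue140812aII} vanishes to infinite order on the diagonal). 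This is exactly the mechanism that converts the subelliptic second-order $\Delta_b$ into an elliptic first-order object after compression by $S$.

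Next I would assemble the operator $E_1$. Having shown $S\Delta_b S \equiv B_+ S$ for $B_+$ a classical pseudodifferential operator of order $1$ (and similarly $\ol S\Delta_b\ol S \equiv B_- \ol S$ by conjugation), and having checked that the cross terms $S\Delta_b\ol S$ and $\ol S\Delta_b S$ are smoothing (because $\varphi$ and $-\ol\varphi$ have no common stationary points off the diagonal, as $\varphi(x,y)=0$ iff $x=y$ and the phases $\varphi s$, $-\ol\varphi s'$ cannot combine to a non-degenerate stationary phase except near $x=y$, where one computes the contribution directly), I would set $E_1 := \tau\Delta_b\tau$ composed appropriately, or more precisely I would produce a single globally defined $A\in L^1_{\rm cl}(M)$ with $\tau A\tau \equiv \tau\Delta_b\tau$. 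To get a genuine (not just mod-smoothing) identity $\tau\Delta_b\tau = \tau E_1\tau$, one absorbs the smoothing remainder: since $\tau$ is a projection, any smoothing operator $R$ with $\tau R\tau = R$ can be written as $\tau(\text{zeroth-order smoothing symbol})\tau$, and adding such a term to $E_1$ yields the exact equality. The reality of the principal symbol follows from self-adjointness of $\Delta_b$ together with self-adjointness of $\tau$: $(\tau\Delta_b\tau)^* = \tau\Delta_b\tau$, so the principal symbol of $E_1$ (restricted to the relevant symplectic cone) is real.

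Finally, ellipticity: I would compute the principal symbol of $E_1$ explicitly from the leading term of the Fourier integral computation above. The relevant symbol lives on the characteristic cone of $\Box_b$ (the half-line bundle $\Sigma\subset T^*M$ where $S$ is microlocally supported, cf.~\cite{BoutetSjostrand1976}), and one checks it is a nonzero multiple of the fiber coordinate $s>0$ there; combined with the conjugate contribution from $\ol S$ on $-\Sigma$, this gives an operator whose symbol is bounded below on all of $T^*M\setminus 0$ after the compression, hence elliptic in the sense needed for the subsequent sections (namely, elliptic as an operator on the range of $\tau$). The last sentence of the theorem, $\tau\Delta_b = \tau E_1$ on $\mP$, is then immediate by right-multiplying the operator identity by the identity on $\mP = \operatorname{ran}\tau$ and using $\tau|_{\mP} = \id$.

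\medskip

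I expect the main obstacle to be the careful stationary-phase bookkeeping showing that applying the second-order operator $\Delta_b$ to the Fourier integral kernel of $S$ lowers the order by one rather than raising it by one --- i.e., genuinely exploiting that $\sigma_{\Box_b}(x,\varphi'_x(x,y))$ vanishes to infinite order on the diagonal so that the would-be order-two symbol contribution is smoothing. Controlling the cross terms $S\Delta_b\ol S$ and handling the passage from a mod-smoothing statement to the exact operator identity are more routine, but the first-order-reduction step is where the real content lies and where one must be most careful with the phase function $\varphi$ supplied by Theorem~\ref{t-gue140812}.
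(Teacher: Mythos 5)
Your overall skeleton matches the paper's: decompose $\tau=S+\ol S+F$, show the diagonal terms $S\Delta_bS$ and $\ol S\Delta_b\ol S$ are first-order pseudodifferential operators sandwiched by the projections, dismiss the cross terms via the incompatibility of the phases $\varphi$ and $-\ol\varphi$ (this is exactly Lemma~\ref{l-gue140813I}), and absorb the smoothing remainder into $E_1$ using $\tau^2=\tau$. Where you diverge is the key second-order-to-first-order reduction, and there your argument has a real gap. You propose to apply $\Delta_b$ to the Boutet de Monvel--Sj\"ostrand representation of $S$ and argue by stationary phase that the would-be order-two contribution dies because $\sigma_{\Box_b}(x,\varphi'_x)$ vanishes to infinite order on the diagonal. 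That identifies the right mechanism, but it leaves you with a Fourier integral operator whose amplitude is one order higher than that of $S$, and you still have to recognize the result as $B_+S$ (or $SB_+S$) for an honest classical $B_+\in L^1_{\rm cl}(M)$ --- that is a Toeplitz structure theorem which you invoke implicitly but do not supply. The paper sidesteps this entirely with two exact operator identities: $\Box_bS=0=S\Box_b$ together with Lee's commutator formula $\ol\Box_b=\Box_b+2iT+L$ (Lemma~\ref{l-gue140813}) give $S\Delta_bS=S(2iT+L)S$ on the nose, with $2iT+L$ already a first-order \emph{differential} operator; the only remaining work is to dispose of $SLS$ using $\ddbar_bS=0$ and an adjoint trick. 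No stationary phase is needed.

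The second issue is global ellipticity. The theorem asserts that $E_1$ is an elliptic classical pseudodifferential operator on $M$, and the later sections genuinely use this (the parametrix $G_t$ of Theorem~\ref{t-gue140816} requires $E_2=(2E_1)^2+\hat E_1$ to be elliptic on all of $T^*M\setminus 0$). The compressions only determine the symbol of $E_1$ microlocally near $\Sigma^-$ and $\Sigma^+$, where it must equal $\sigma_{2iT}$ and $-\sigma_{2iT}$ respectively; these two pieces do not "combine" into anything bounded below away from $\Sigma^\pm$, and "elliptic as an operator on the range of $\tau$" is weaker than what is claimed and used. One must make an explicit elliptic choice of the symbol off $\Sigma^\pm$; the paper does this in \eqref{e-gue140813aII} by interpolating with $\sqrt{\sigma_{\Box_b}}$ via homogeneous cutoffs $\chi_0,\chi_1$ and then checking $q(x,\xi)\geq c\abs{\xi}$. (This also gives the reality of the principal symbol for free, rather than via the self-adjointness argument, which only controls the symbol on $\Sigma^\pm$.) With these two points repaired your argument would go through, but as written both the identification of the compressed operator as a genuine $\Psi$DO and the ellipticity of $E_1$ are asserted rather than established.
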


The proof of Theorem~\ref{t-gue140813} requires many ingredients.  First, we have the following immediate consequence of the commutator formulae proven by Lee~\cite{Lee1988}.

\begin{lem}\label{l-gue140813}
It holds that $\ol\Box_b=\Box_b+2iT+L$ for some $L\in C^\infty(M,T^{1,0}M\oplus T^{0,1}M)$. 
\end{lem}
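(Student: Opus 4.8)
The plan is to compute both sides acting on an arbitrary $f\in C^\infty(M)$ by expressing them through an admissible coframe and the pseudohermitian connection $\nabla$. Fix an admissible coframe $\theta^1$ on an open set, with dual frame $\{Z_1,Z_{\bar1},T\}$. Unwinding the definition $\Box_b=\ddbar_b^*\ddbar_b$ with the Hermitian metric of Section~\ref{sec:hkernel}, one writes $\Box_b f$ as a constant multiple of $-\nabla^{\bar1}\nabla_{\bar1}f$ plus explicit first-order connection terms; taking complex conjugates — using that $\overline{\Box}_b$ is the Kohn Laplacian of the conjugate CR structure, equivalently $\overline{\Box}_b f=\overline{\Box_b(\overline f)}$ — gives the corresponding expression for $\overline{\Box}_b f$ with $\nabla_{\bar1}$ replaced by $\nabla_1$ throughout.

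Subtracting, the (symmetric) second-order parts cancel, so $\overline{\Box}_b-\Box_b$ is a first-order operator; moreover both $\Box_b$ and $\overline{\Box}_b$ annihilate constants, so the difference has no zeroth-order term and is therefore a smooth complex vector field, i.e.\ a section of $T^{1,0}M\oplus T^{0,1}M$. To identify its $T$-component one invokes Lee's commutation formulas~\cite{Lee1988}: the failure of $\nabla_1$ and $\nabla_{\bar1}$ to commute on a function is governed by the Reeb derivative $T$ together with the torsion of the Tanaka--Webster connection, and assembling these contributions — in the normalization fixed by $d\theta=ih_{1\bar1}\theta^1\wedge\theta^{\bar1}$ in Section~\ref{sec:bg} and the Hermitian metric of Section~\ref{sec:hkernel} — produces exactly $2iT$, while the remaining connection and torsion terms are horizontal and make up the section $L$. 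This gives $\overline{\Box}_b=\Box_b+2iT+L$.

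The computation is routine once Lee's identities are in hand; the only place care is needed is keeping track of the several first-order correction terms (those entering $\ol\pr_b^*$, those from $[\nabla_1,\nabla_{\bar1}]$ acting on scalars, and those from $[Z_1,Z_{\bar1}]$) and checking that the $T$-parts combine to the stated coefficient while the rest is genuinely horizontal. I do not expect a serious obstacle here: this lemma is the easy input to Theorem~\ref{t-gue140813}, its role being to exhibit $\overline{\Box}_b$ as $\Box_b$ plus an operator which is first order and whose top-order part is a constant multiple of $T$ — precisely the structural fact that will drive the ellipticity of $\tau\Delta_b\tau$ on $\mP$.
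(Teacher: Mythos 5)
Your argument is exactly the one the paper intends: the lemma is stated there as an immediate consequence of Lee's commutator formulas, and your computation --- writing $\Box_b f$ and $\ol\Box_b f$ via the pseudohermitian connection, observing that the difference reduces to the commutator $[\nabla_1,\nabla_{\bar 1}]$ acting on functions (so the symmetrized second-order parts cancel), identifying the Reeb term $2iT$ from Lee's identities under the normalization of Sections~\ref{sec:bg} and~\ref{sec:hkernel}, and noting that the remainder is a genuine vector field because both operators annihilate constants --- is precisely that consequence spelled out. No gaps; this matches the paper's approach.
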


We need the following result given in~\cite[Lemma 5.7]{HsiaoMarinescu2014}

\begin{lem}\label{l-gue140813I}
Let $A, B\colon C^\infty_0(M)\To\mathscr D'(M)$ be continuous operators such that the kernels of $A$ and $B$ satisfy 
\[\begin{split}
&A(x,y)=\int^\infty_0e^{i\varphi(x,y)s}\alpha(x,y,s)ds,\ \ \alpha(x,y,s)\in S^m_{{\rm cl\,}}(M\times M\times\ol\bR_+),\\
&B(x,y)=\int^\infty_0e^{-i\ol\varphi(x,y)s}\beta(x,y,s)ds,\ \ \beta(x,y,s)\in S^k_{{\rm cl\,}}(M\times M\times\ol\bR_+)
\end{split}\]
for some $m,k\in\bZ$, where $\varphi(x,y)\in C^\infty(M\times M)$ is as in Theorem~\ref{t-gue140812}. Then, 
\[A\circ B\equiv0,\ \ B\circ A\equiv0.\]
\end{lem}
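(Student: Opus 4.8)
The plan is to reduce the composition $A\circ B$ to an oscillatory integral and show that its phase function has no stationary points, which forces the kernel to be smooth. Writing the kernels as given,
\[
(A\circ B)(x,y) = \int_M \int_0^\infty\int_0^\infty e^{i\varphi(x,w)s - i\ol\varphi(w,y)t}\,\alpha(x,w,s)\,\beta(w,y,t)\,ds\,dt\,dm(w),
\]
where $dm$ is the fixed smooth density on $M$. The strategy is to perform stationary phase (or, more elementarily, repeated integration by parts) in the $w$-variable for fixed $(x,y)$ away from the diagonal, and then control the remaining $s,t$-integrals. The key geometric input is the property $d_x\varphi|_{x=y} = -\theta(x)$ from~\eqref{e-gue140812aII}, together with $\ol\varphi(w,y) = -\varphi(y,w)$, so that the $w$-derivative of the phase is $s\,d_w\varphi(x,w) + t\,d_w\varphi(y,w)$. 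On the diagonal $x=w=y$ both differentials equal $-\theta(w)$, so they add rather than cancel; since $s,t>0$ and $\theta$ is nonvanishing, the $w$-gradient of the phase is bounded below by $c(s+t)$ near the diagonal. Combined with $\Imaginary\varphi\geq 0$, one gets the non-stationary phase estimate: the $w$-integral contributes rapid decay in $s+t$, uniformly, which after integrating the symbol estimates (of polynomial growth $s^m$, $t^k$) in $s$ and $t$ yields a smooth kernel. Hence $A\circ B\equiv 0$.

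More precisely, I would fix a smooth cutoff localizing $w$ to a small neighborhood of the diagonal values of $x$ and $y$; away from that neighborhood the kernels $A$ and $B$ are already smoothing in the relevant variable (since $\varphi(x,w)=0$ iff $x=w$ and $\Imaginary\varphi>0$ off-diagonal gives exponential decay of the $s$-integral), so that contribution is immediately smooth. On the localized piece, introduce the change of variables $s = \sigma u$, $t = (1-\sigma)u$ with $u\in(0,\infty)$, $\sigma\in(0,1)$, so the phase becomes $u\,\psi_\sigma(x,y,w)$ with $\psi_\sigma(x,y,w) = \sigma\varphi(x,w) - (1-\sigma)\ol\varphi(w,y)$, and the symbol is homogeneous of degree $m+k+1$ in $u$ times a function of $\sigma$. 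The calculation above shows $|d_w\psi_\sigma| \geq c > 0$ uniformly in $\sigma$ near the diagonal, and $\Imaginary\psi_\sigma\geq 0$. Applying the standard lemma on oscillatory integrals with complex phase of positive imaginary part and nonvanishing real gradient (integration by parts $N$ times in $w$ produces a factor $u^{-N}$, and $N>m+k+1+3$ makes the $u$-integral absolutely convergent with all $x,y$-derivatives controlled), we conclude the kernel of $A\circ B$ is $C^\infty$ on $M\times M$. The argument for $B\circ A$ is identical, with the roles of the two phases interchanged; there the $w$-gradient of the phase is $t\,d_w\varphi(x,w)\big|_{\text{via }\ol\varphi}$ plus $s\,d_w\varphi(w,y)$, and again both terms point along $-\theta$ on the diagonal and add.

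The main obstacle is making the "non-stationary phase with nonnegative imaginary part, uniformly in an auxiliary parameter" estimate precise and uniform: one must track that the lower bound on $|d_w\psi_\sigma|$ does not degenerate as $\sigma\to 0$ or $\sigma\to 1$ (where one factor of the phase drops out, but then the surviving factor is itself elliptic in $w$ off the diagonal, so the bound survives), and that the derivatives falling on the amplitude during integration by parts stay within the symbol classes $S^m_{\mathrm{cl}}$, $S^k_{\mathrm{cl}}$ with only polynomial loss in $u$. This is exactly the content of~\cite[Lemma~5.7]{HsiaoMarinescu2014}, so in the write-up I would invoke that lemma directly rather than reproving it, checking only that the phase $\varphi$ here satisfies its hypotheses — namely~\eqref{e-gue140812aII}, which it does by Theorem~\ref{t-gue140812}.
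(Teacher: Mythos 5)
The paper offers no proof of this lemma at all---it is quoted verbatim from \cite[Lemma~5.7]{HsiaoMarinescu2014}---which is precisely what you propose to do in the end, and your sketch of the underlying mechanism (the $w$-gradients of the two phases point in the same direction along $\theta$ on the diagonal, so they add and give $|d_w\Phi|\gtrsim s+t$, forcing the composed kernel to be smooth) is the correct reason the cited lemma holds. One small sign slip: at $x=w=y$ the $w$-differentials of $\varphi(x,w)$ and $\varphi(y,w)$ equal $+\theta(w)$ rather than $-\theta(w)$, since $d_x\varphi|_{x=y}=-\theta$ refers to the first slot and $\varphi(x,x)=0$ forces the second-slot derivative to be $+\theta$; this does not affect the lower bound or the conclusion.
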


To proceed, set 
\begin{equation}\label{e-gue140813aI}
\Sigma^-=\set{(x,\lambda\theta(x))\in T^\ast M;\, \lambda<0},\ \ \Sigma^+=\set{(x,\lambda\theta(x))\in T^*M;\, \lambda>0}.
\end{equation}
Let $\sigma_{\Box_b}(x,\xi)$ and $\sigma_{2iT}(x,\xi)$ be the principal symbols of $\Box_b$ and $2iT$, respectively. It is easy to see that $\sigma_{\Box_b}(x,\xi)=0$ for all $(x,\xi)\in\Sigma^-\cup\Sigma^+$; that $\sigma_{2iT}(x,\xi)>0$ for all $(x,\xi)\in\Sigma^-$; and that $\sigma_{2iT}(x,\xi)<0$ for all $(x,\xi)\in\Sigma^+$. For $(x,\xi)\in T^*M$, we write $\abs{\xi}$ to denote the point norm of the cotangent vector $\xi\in T^*_xM$. Take $\chi_0, \chi_1\in C^\infty(T^*M,[0,1])$ such that
\begin{enumerate}
\item $\chi_0=1$ in a small neighbourhood of $\Sigma^-\cap\set{(x,\xi)\in T^*M;\, \abs{\xi}\geq1}$,
\item $\chi_1=1$ in a small neighbourhood of $\Sigma^+\cap\set{(x,\xi)\in T^*M;\, \abs{\xi}\geq1}$,
\item $\supp\chi_0\cap\supp\chi_1=\emptyset$,
\item $\sigma_{2iT}(x,\xi)>0$ for all $(x,\xi)\in\supp\chi_0$,
\item $\sigma_{2iT}(x,\xi)<0$ for all $(x,\xi)\in\supp\chi_1$, and
\item $\chi_0$, $\chi_1$ are positively homogeneous of degree zero in the sense that 
\[\chi_0(x,\lambda\xi)=\chi_0(x,\xi),\ \ \chi_1(x,\lambda\xi)=\chi_1(x,\xi)\ \ \text{for all $\lambda\geq 1$ and $\abs{\xi}\geq1$}.\]
\end{enumerate}
Define
\begin{equation}\label{e-gue140813aII}
\begin{split}
q(x,\xi)=&(1-\chi_0(x,\xi)-\chi_1(x,\xi))\sqrt{\sigma_{\Box_b}(x,\xi)}\\
&\quad+\chi_0(x,\xi)\sigma_{2iT}(x,\xi)-\chi_1(x,\xi)\sigma_{2iT}(x,\xi).
\end{split}
\end{equation}
Note that $\sigma_{\Box_b}(x,\xi)>0$ for all $(x,\xi)\notin\Sigma^-\cup\Sigma^+$. From this observation, it is easy to see that $q(x,\xi)\geq c\abs{\xi}$ for all $(x,\xi)\in T^*M$ with $\abs{\xi}\geq1$, where $c>0$ is a constant. Let $\Td E_1\in L^1_{{\rm cl\,}}(M)$ with symbol $q(x,\xi)\in C^\infty(T^*M)$. Then $\Td E_1$ is a classical elliptic pseudodifferential operator. It is known that (see~\cite{Hsiao2010}) ${\rm WF'\,}(S)={\rm diag\,}(\Sigma^-\times\Sigma^-)$ and ${\rm WF'\,}(\ol S)={\rm diag\,}(\Sigma^+\times\Sigma^+)$, where 
\[{\rm WF'\,}(S)=\set{(x,\xi,y,\eta)\in T^*M\times T^*M\colon\, (x,\xi,y,-\eta)\in{\rm WF\,}(S)} \]
and ${\rm WF\,}(S)$ denotes the wave front set of $S$ in the sense of H\"ormander~\cite[Chapter 8]{Hormander2003}. 
Recall that $S$ denotes the Szeg\H{o} projection. From this observation and \eqref{e-gue140813aII}, it is not difficult to see that 
\begin{equation}\label{e-gue140813aIII}
S\Td E_1\equiv S(2iT),\ \ \Td E_1S\equiv(2iT)S,\ \ \ol S\Td E_1\equiv\ol S(-2iT),\ \ \Td E_1\ol S\equiv(-2iT)\ol S.
\end{equation}
Alternatively, \eqref{e-gue140813aIII} can be checked directly from the fact that $d_x\varphi|_{x=y}=-\theta(x)$. Now, we can prove the following theorem.

\begin{thm}\label{t-gue140814}
With the notations above, there is an $\Td E_0\in L^0_{{\rm cl\,}}(M)$ such that 
\[S\Delta_bS\equiv S(\Td E_1+\Td E_0)S \quad\text{and}\quad \ol S\Delta_b\ol S\equiv\ol S(\Td E_1+\Td E_0)\ol S.\]
\end{thm}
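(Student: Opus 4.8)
The plan is to show that, after composing with the Szeg\H{o} projections, the subelliptic second-order operator $\Delta_b$ drops to first order, and then to replace the resulting first-order operator by $\Td E_1$ modulo an operator in $L^0_{{\rm cl\,}}(M)$. I would treat $S\Delta_b S$ in detail and obtain $\ol S\Delta_b\ol S$ by the conjugate argument. The first step is the \emph{reduction to first order}: since $S$ is the orthogonal projection onto $\ker\ddbar_b=\ker\Box_b$, we have $\Box_b S=0$ and $S\Box_b=0$; combining this with the formula expressing $\Delta_b$ through the two Kohn Laplacians and with Lemma~\ref{l-gue140813}, the $\Box_b$-part of $\Delta_b$ is annihilated by $S$ while $\ol\Box_b=\Box_b+2iT+L$ turns the remaining part into the first-order operator $2iT+L$. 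Thus $\Delta_b S=(2iT+L)S$ modulo an operator mapping into $C^\infty$, and hence
\[ S\Delta_b S \equiv S(2iT)S + SLS, \]
for a fixed $L\in C^\infty(M,T^{1,0}M\oplus T^{0,1}M)$. (Equivalently, this can be seen from the structure equations directly: if $u$ is a CR function then $\nabla_{\bar 1}u=0$, so $\Delta_b u=\nabla^1\nabla_1u$, and the commutator formulas for $\nabla_1,\nabla_{\bar 1}$ on scalars produce the Reeb term $2iT$ and the horizontal term $L$.)

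Next, by the first identity in~\eqref{e-gue140813aIII}, $S(2iT)S\equiv S\Td E_1 S$, so it remains to see that $SLS$ (together with the order-zero remainder from the previous step) has the form $S\Td E_0'S$ for some $\Td E_0'\in L^0_{{\rm cl\,}}(M)$. Here I would use \emph{both} defining properties of the Szeg\H{o} projection. From $\ddbar_b S=0$, together with the fact that $\Omega^{0,1}(M)$ is a line bundle spanned by $\theta^{\bar 1}$, one gets $Z_{\bar 1}S=0$. Since $\mathrm{Range}(\ddbar^*_b)\perp\ker\ddbar_b$, one also gets $S\ddbar^*_b=0$; because $\ddbar^*_b(f\theta^{\bar 1})=-h^{1\bar 1}Z_1f+(\text{function})\,f$ with $h^{1\bar 1}$ nowhere zero, this forces $SZ_1=S\circ(\text{multiplication by a smooth function})$. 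Writing $L=aZ_1+bZ_{\bar 1}$ and pushing the derivatives through the flanking $S$'s using these two relations, one finds $SLS=S\circ(\text{multiplication by a smooth function})\circ S$; since multiplication operators lie in $L^0_{{\rm cl\,}}(M)$, this completes the identification $S\Delta_b S\equiv S(\Td E_1+\Td E_0')S$.

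The same argument with $S$ replaced by $\ol S$, $Z_1$ by $Z_{\bar 1}$, and the first identity of~\eqref{e-gue140813aIII} replaced by $\ol S(-2iT)\ol S\equiv\ol S\Td E_1\ol S$ gives $\ol S\Delta_b\ol S\equiv\ol S(\Td E_1+\Td E_0'')\ol S$ for some $\Td E_0''\in L^0_{{\rm cl\,}}(M)$ (the sign reversal of $\sigma_{2iT}$ over $\Sigma^+$ is compensated by the corresponding reversal coming from anti-CR functions). Finally, since ${\rm WF'\,}(S)$ and ${\rm WF'\,}(\ol S)$ are supported over the disjoint cones $\Sigma^-$ and $\Sigma^+$, I would choose $\Td E_0\in L^0_{{\rm cl\,}}(M)$ whose symbol agrees with that of $\Td E_0'$ near $\Sigma^-$ and with that of $\Td E_0''$ near $\Sigma^+$; then $S\Td E_0 S\equiv S\Td E_0'S$ and $\ol S\Td E_0\ol S\equiv\ol S\Td E_0''\ol S$, which yields the theorem. \textbf{The main obstacle} is the middle step: a first-order differential operator flanked by $S$'s need not a priori have order less than one, and the drop in order rests on the exact operator identities $\ddbar_b S=0$ and $S\ddbar^*_b=0$ for the Szeg\H{o} projection rather than on the mere location of its wave front set.
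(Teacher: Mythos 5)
Your argument is correct and follows essentially the same route as the paper: reduce $S\Delta_bS$ to $S(2iT+L)S$ via $\Box_bS=0$ and Lemma~\ref{l-gue140813}, replace $2iT$ by $\Td E_1$ using~\eqref{e-gue140813aIII}, show the flanked first-order remainder is really $S\circ(\text{mult})\circ S$, and patch the two zeroth-order symbols together with $\chi_0,\chi_1$ exactly as in the paper's definition $\Td E_0=\chi_0g_0+\chi_1g_1$. The only (cosmetic) difference is in the middle step: the paper disposes of $SUS$ by taking adjoints, $(SUS)^*=SU^*S=S(-\ol U+r)S=SrS$, whereas you push $Z_1$ through $S$ directly via $S\ddbar_b^*=0$; both rest on the same two identities $\ddbar_bS=0$ and $S\ddbar_b^*=0$ and yield the same conclusion.
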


\begin{proof}
From Lemma~\ref{l-gue140813}, \eqref{e-gue140813aIII}, and the observation that $\Box_bS=0$, we have 
\begin{equation}\label{e-gue140814}
S\Delta_bS=S(2iT+L)S=S\Td E_1S+SLS+F_0,
\end{equation}
where $F_0\equiv0$. We write $L=U+\ol V$ for $U, V\in C^\infty(M,T^{1,0}M)$. Since $\ddbar_bS=0$, we have 
\begin{equation}\label{e-gue140814I}
S\ol VS=0.
\end{equation}
Now, 
\[(SUS)^*=SU^*S=S(-\ol U+r)S=SrS,\]
where $(SUS)^*$ and $U^*$ are the adjoints of $SUS$ and $S$ respectively and $r\in C^\infty(M)$. Hence, 
\begin{equation}\label{e-gue140814II}
SUS=S\ol rS.
\end{equation}
From \eqref{e-gue140814}, \eqref{e-gue140814I} and \eqref{e-gue140814II}, we conclude that 
\begin{equation}\label{e-gue140814III}
S\Delta_bS=S(\Td E_1+g_0)S+F_0,
\end{equation}
where $g_0\in C^\infty(M)$, $F_0\equiv0$. Similarly, 
\begin{equation}\label{e-gue140814IV}
\ol S\Delta_b\ol S=\ol S(\Td E_1+g_1)\ol S+F_1,
\end{equation}
where $g_1\in C^\infty(M)$ and $F_1\equiv0$. Put 
\[\Td E_0=\chi_0(x,\xi)g_0+\chi_1(x,\xi)g_1,\]
where $\chi_0, \chi_1$ are as in \eqref{e-gue140813aII}. As in the discussion before \eqref{e-gue140813aIII}, we have 
\begin{equation}\label{e-gue140814V}
Sg_0S\equiv S\Td E_0S,\ \ \ol Sg_1\ol S\equiv\ol S\Td E_0\ol S.
\end{equation}
The desired conclusion follows from~\eqref{e-gue140814III}, \eqref{e-gue140814IV} and~\eqref{e-gue140814V}.
\end{proof}

\begin{proof}[Proof of Theorem~\ref{t-gue140813}]
From Theorem~\ref{t-gue140814} and \eqref{e-gue140812a}, we have 
\begin{equation}\label{e-gue140814VI}
\begin{split}
% &\tau\Delta_b\tau\\
\tau\Delta_b\tau&=(S+\ol S)\Delta_b(S+\ol S)+G_0\\
&=S\Delta_bS+\ol S\Delta_b\ol S+S\Delta_b\ol S+\ol S\Delta_bS+G_0\\
&=S(\Td E_1+\Td E_0)S+\ol S(\Td E_1+\Td E_0)\ol S+S\Delta_b\ol S+\ol S\Delta_bS+G_1\\
&=(S+\ol S)(\Td E_1+\Td E_0)(S+\ol S)-S(\Td E_1+\Td E_0)\ol S-\ol S(\Td E_1+\Td E_0)S\\
&\quad+S\Delta_b\ol S+\ol S\Delta_bS+G_1\\
&=\tau(\Td E_1+\Td E_0)\tau-S(\Td E_1+\Td E_0)\ol S-\ol S(\Td E_1+\Td E_0)S+S\Delta_b\ol S+\ol S\Delta_bS+G_2,
\end{split}
\end{equation}
where $G_0, G_1, G_2$ are smoothing operators. In view of Lemma~\ref{l-gue140813I} and Theorem~\ref{t-gue140812I}, we see that $S(\Td E_1+\Td E_0)\ol S$, $\ol S(\Td E_1+\Td E_0)S$, $S\Delta_b\ol S$ and
$\ol S\Delta_bS$ are smoothing. From this and \eqref{e-gue140814VI}, we get 
\[\tau\Delta_b\tau=\tau(\Td E_1+\Td E_0)\tau+G,\]
where $G$ is smoothing. Hence,
\begin{equation}\label{e-gue140814VII}
\tau\Delta_b\tau=\tau^2\Delta_b\tau^2=\tau^2(\Td E_1+\Td E_0)\tau^2+\tau G\tau=\tau(\Td E_1+\Td E_0+G)\tau.
\end{equation}
Put $E_1=\Td E_1+\Td E_0+G\in L^1_{{\rm cl\,}}(M)$. From \eqref{e-gue140814VII}, we get $\tau\Delta_b\tau=\tau E_1\tau$. The theorem follows.
\end{proof}

\section{The Green's function of square root of $\oP^\prime_4$}\label{sec:hsqrt}

In this section, we will prove Theorem~\ref{t-gue140827}. First, we can repeat the proof of Theorem~\ref{t-gue140813} with minor change and get the following result.

\begin{thm}\label{t-gue140814I}
We have 
\[ \oP_4^\prime = \tau\bigr((2E_1)^2+\hat E_1\bigr)\ \ \mbox{on $\mathcal{P}$},\]
where $E_1\in L^1_{{\rm cl\,}}(M)$ is as in Theorem~\ref{t-gue140813} and $\hat E_1\in L^1_{{\rm cl\,}}(M)$.
\end{thm}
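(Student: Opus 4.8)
The plan is to mimic the proof of Theorem~\ref{t-gue140813}, but starting from the local formula~\eqref{e-gue140811IV} for $P_4^\prime$ rather than from $\Delta_b$ itself. Recall that $\tau\Delta_b\tau=\tau E_1\tau$ with $E_1\in L^1_{{\rm cl\,}}(M)$ elliptic; iterating this on $\mP$ gives $\tau\Delta_b^2\tau=\tau\Delta_b\tau\cdot\tau\Delta_b\tau=\tau E_1\tau E_1\tau=\tau E_1^2\tau$ after absorbing the smoothing errors using $\tau^2=\tau$ modulo smoothing. Thus the leading term $4\Delta_b^2$ of $P_4^\prime$ contributes $\tau(4E_1^2)\tau=\tau(2E_1)^2\tau$ on $\mP$. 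What remains is to show that each of the lower-order terms $R\Delta_b f$, $\Delta_b R\,f$, $(L_1L_2+\ol L_1\ol L_2)f$, $(L_3+\ol L_3)f$, and $rf$ contributes, after pre- and post-composing with $\tau$, an operator of the form $\tau(\text{something in }L^1_{{\rm cl\,}}(M))\tau$ on $\mP$, since then everything can be collected into $\tau\bigl((2E_1)^2+\hat E_1\bigr)\tau$ with $\hat E_1\in L^1_{{\rm cl\,}}(M)$.

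First I would handle the second-order pieces. For $R\Delta_b f$, write $\tau(R\Delta_b)\tau=\tau(R)\tau\Delta_b\tau$ modulo smoothing — using that multiplication by the smooth function $R$ commutes with $\tau$ up to a smoothing operator, which follows from Theorem~\ref{t-gue140812} exactly as multiplication by $\chi_0 g_0+\chi_1 g_1$ was handled in the proof of Theorem~\ref{t-gue140814} — and then replace $\tau\Delta_b\tau$ by $\tau E_1\tau$, obtaining $\tau(R\cdot E_1)\tau$ with $R\cdot E_1\in L^1_{{\rm cl\,}}(M)$. The terms $L_1 L_2 f$, $\ol L_1\ol L_2 f$ and $\Delta_b R\,f$ are each a composition of at most two first-order vector fields (or a smooth function), hence honest second-order differential operators; I would write each as a polynomial in $\Delta_b$, $T$, and smooth functions using the structure equations, then use Lemma~\ref{l-gue140813} together with $\Box_b S=0$, $\ddbar_b S=0$ and their conjugates — as in~\eqref{e-gue140814I}, \eqref{e-gue140814II} — to reduce the $S$-sandwiched and $\ol S$-sandwiched parts to multiplication by smooth functions plus $\Td E_1$-terms, exactly as in Theorem~\ref{t-gue140814}. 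The first-order terms $L_3 f$, $\ol L_3 f$ and the zeroth-order term $rf$ are even easier: sandwiched by $S$ or $\ol S$ they become multiplication operators of order $\leq 1$ by the same adjoint trick, and the cross terms $S(\cdots)\ol S$, $\ol S(\cdots)S$ are smoothing by Lemma~\ref{l-gue140813I} and Theorem~\ref{t-gue140812I}. Assembling, one gets $\tau P_4^\prime\tau=\tau\bigl((2E_1)^2+\hat E_1\bigr)\tau$ on $\mathscr D'(M)$ with $\hat E_1\in L^1_{{\rm cl\,}}(M)$, and restricting to $\mP$ (where $\tau$ acts as the identity on the right) gives the claimed identity on $\mathcal{P}$.

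The main obstacle I expect is bookkeeping the order-one errors carefully. When one writes $\tau\Delta_b^2\tau=\tau E_1^2\tau$, the cross terms $SE_1\ol S$ and $\ol S E_1 S$ must be shown smoothing — this needs that $E_1=\Td E_1+\Td E_0+G$ with $\Td E_1$ built from the symbol $q(x,\xi)$ that was arranged (via~\eqref{e-gue140813aIII}) to interact correctly with $S$ and $\ol S$, so the wave-front calculus of Lemma~\ref{l-gue140813I} applies; one must check that this persists through the composition $E_1\circ E_1$ rather than just for $E_1$ alone. Concretely, $SE_1^2\ol S=SE_1(S+\ol S+F)E_1\ol S+(\text{smoothing})=SE_1 S E_1\ol S+SE_1\ol S E_1\ol S+(\text{smoothing})$, and each summand is smoothing because $SE_1\ol S\equiv 0$ and $\ol S E_1\ol S\equiv \ol S(-2iT)\ol S$ followed by $\cdots\ol S$ again reduces as in~\eqref{e-gue140814}. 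A second, minor subtlety: the operator $\Delta_b R\,f$ contains a genuine $\Delta_b R$ coefficient that is only smooth (not constant), so one cannot pull it past $\tau$ for free, but the same commutator-is-smoothing fact used throughout takes care of it. Once these reductions are in place the conclusion is immediate, and the ellipticity of $(2E_1)^2+\hat E_1$ — needed later — follows from that of $E_1$ since $\hat E_1$ is lower order.
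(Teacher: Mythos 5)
The paper's own ``proof'' of this theorem is the single sentence that one repeats the proof of Theorem~\ref{t-gue140813} with minor changes, and your proposal is the right elaboration of that instruction: start from the local formula~\eqref{e-gue140811IV}, reduce the leading term $4\Delta_b^2$ to $(2E_1)^2$ modulo order $\leq 1$, and grind the lower-order terms down to $\tau(\text{something in }L^1_{{\rm cl\,}}(M))\tau$ using the adjoint tricks of Theorem~\ref{t-gue140814} for the $S$- and $\ol S$-diagonal pieces and Lemma~\ref{l-gue140813I} for the cross terms, finally absorbing all smoothing errors via $\tau^2=\tau$ as in \eqref{e-gue140814VII}.

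There is, however, a genuine gap in your treatment of the leading term. The identity $\tau\Delta_b^2\tau=\tau\Delta_b\tau\cdot\tau\Delta_b\tau+(\text{smoothing})$ is not justified: the error is $\tau\Delta_b(I-\tau)\Delta_b\tau$, whose $S$-diagonal part equals $S(2iT+L)(I-S)(2iT+L)S$ (using $S\Box_b=\Box_bS=0$ and Lemma~\ref{l-gue140813}), and this is a Toeplitz-type operator of order $1$, not a smoothing operator. An order-$1$ error is of course harmless here, since it is exactly what gets absorbed into $\hat E_1$; but establishing even that requires an argument (e.g.\ $(I-S)=N\Box_b$ plus commutators, as in Lemma~\ref{l-gue141227I}), and at that point it is cleaner to drop the factorization altogether and compute directly: $\Delta_b^2S=\Delta_b\ol\Box_bS=\Delta_b(2iT+L)S=(2iT+L)^2S+[\Delta_b,2iT+L]S$, so $S\Delta_b^2S\equiv S\Td E_1^2S+SD_2S$ with $D_2$ an honest second-order differential operator, which the adjoint tricks of Theorem~\ref{t-gue140814} reduce to $S(\text{order}\leq1)S$. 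Relatedly, two of your auxiliary claims are false as stated, although the conclusions they serve are true and reachable directly: (i) $SE_1^2\ol S$ does not equal $SE_1\tau E_1\ol S$ modulo smoothing, because $E_1(I-\tau)E_1$ is not smoothing --- but since $E_1^2\in L^2_{{\rm cl\,}}(M)$, the vanishing $SE_1^2\ol S\equiv0$ follows from Lemma~\ref{l-gue140813I} with no insertion of $\tau$; (ii) $[\tau,R]$ is not smoothing for a general smooth function $R$ (it is an FIO with the same phase, merely of one order lower) --- but $SR\Delta_bS=SR(2iT+L)S\equiv S(R\Td E_1+\cdots)S$ directly from \eqref{e-gue140813aIII}, so you never need to commute $R$ past $\tau$. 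With these repairs the argument closes and matches what the authors intend.
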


In particular, $\oP_4^\prime$ is an elliptic pseudodifferential operator  
on $\mathcal{P}$. Standard arguments for elliptic operators imply that the spectrum ${\rm Spec\,}\oP_4^\prime$ of $\oP_4^\prime$ is a discrete subset of $\left(-\infty,\infty\right)$ such that every $\lambda\in{\rm Spec\,}\oP_4^\prime$ is an eigenvalue of $\oP_4^\prime$ and the eigenspace 
\[\mathcal{E}_\lambda(\oP_4^\prime):=\set{u\in{\rm Dom\,}\oP_4^\prime \colon \oP_4^\prime u=\lambda u} \]
is a finite dimensional subspace of $\mathcal{P}$.

Let 
\[\pi\colon\hat{\mathcal{P}}\to{\rm Ker\,}\oP_4^\prime\]
be the orthogonal projection. Let $\set{g_1,g_2,\dotsc,g_d}\subset\mP$ be an orthonormal frame for ${\rm Ker\,}\oP_4^\prime$, where $d\in\mathbb N_0$. Then
\begin{equation}\label{e-gue140815}
\pi(x,y)=\sum^d_{j=1}g_j(x)\ol g_j(y)\in C^\infty(M\times M).
\end{equation}
From \eqref{e-gue140815}, we can extend $\pi$ to $\mathscr D'(M)$ as a smoothing operator on $M$. 

Assume that $\oP_4^\prime$ is nonnegative. Then ${\rm Spec\,}\oP_4^\prime\subset\left[0,\infty\right)$ and $\oP_4^\prime$ has a well-defined square root 
\[ \bigl(\oP_4^\prime\bigr)^{\frac{1}{2}} \colon {\rm Dom\,}\bigl(\oP_4^\prime\bigr)^{\frac{1}{2}} \subset \hat\mP \to \hat\mP . \]
Note that ${\rm Dom\,}\bigl(\oP_4^\prime\bigr)^{\frac{1}{2}}={\rm Dom\,}\oP_4^\prime$. We write 
\[\bigl(\oP_4^\prime\bigr)^{-\frac{1}{2}}\colon\hat\mP\To{\rm Dom\,}\bigl(\oP_4^\prime\bigr)^{\frac{1}{2}}\]
to denote the Green's function of $\bigl(\oP_4^\prime\bigr)^{\frac{1}{2}}$. That is, 
\begin{equation}\label{e-gue141204}
\begin{split}
&\bigl(\oP_4^\prime\bigr)^{\frac{1}{2}}\circ\bigl(\oP_4^\prime\bigr)^{-\frac{1}{2}}+\pi=I\ \ \mbox{on $\hat\mP$},\\
&\bigl(\oP_4^\prime\bigr)^{-\frac{1}{2}}\circ\bigl(\oP_4^\prime\bigr)^{\frac{1}{2}}+\pi=I\ \ \mbox{on ${\rm Dom\,}\bigl(\oP_4^\prime\bigr)^{\frac{1}{2}}$}.
\end{split}
\end{equation}

For every $t>0$, the operator 
\[ \oP_4^\prime+t+\pi \colon {\rm Dom\,}\oP_4^\prime\to\hat{\mathcal{P}}\]
has a continuous inverse 
\[(\oP_4^\prime+t+\pi)^{-1}\colon\hat{\mathcal{P}}\to\hat{\mathcal{P}}\]
and the operator $(\oP_4^\prime+t+\pi)^{-1}$ depends continuously on $t$. Let $\lambda_1>0$ be the first non-zero eigenvalue of $\oP_4^\prime$. Then
\begin{equation}\label{e-gue140816f}
\begin{split}
\norm{(\oP_4^\prime+t+\pi)^{-1}u} & \leq \frac{1}{\lambda_1+t}\norm{(I-\pi)u}+\frac{1}{1+t}\norm{\pi u} \\
& \leq \frac{1}{\min\{\lambda_1,1\}+t}\norm{u}
\end{split}
\end{equation}
for all $u\in\hat\mP$.  $\bigl(\oP_4^\prime\bigr)^{-1/2}$ can be understood as follows.

\begin{lem}\label{l-gue140815}
On $\hat{\mathcal{P}}\cap({\rm Ker\,}\oP^\prime_4)^\perp$, we have
\[ \bigl(\oP_4^\prime\bigr)^{-\frac{1}{2}} = c\int^\infty_0t^{-\frac{1}{2}}(\oP_4^\prime+t+\pi)^{-1}dt ,\]
where $c^{-1}=\int^\infty_0t^{-\frac{1}{2}}(1+t)^{-1}dt$.
\end{lem}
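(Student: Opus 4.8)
The plan is to reduce everything to the elementary scalar identity
\[ \lambda^{-1/2} = c\int_0^\infty t^{-1/2}(\lambda+t)^{-1}\,dt, \qquad \lambda>0, \]
which follows from the substitution $t=\lambda s$ together with the definition $c^{-1}=\int_0^\infty t^{-1/2}(1+t)^{-1}\,dt$ (a Beta integral, finite and equal to $\pi$), and then to feed this into the spectral decomposition of $\oP_4^\prime$.

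First I would record that, by Theorem~\ref{t-gue140814I} and the standard elliptic theory already invoked before the statement, $\oP_4^\prime$ is a nonnegative self-adjoint operator on $\hat{\mathcal{P}}$ with discrete spectrum and finite-dimensional eigenspaces; thus $\hat{\mathcal{P}}=\bigoplus_{j}\mathcal{E}_{\lambda_j}(\oP_4^\prime)$ with $0\leq\lambda_0<\lambda_1<\cdots\to\infty$, and I let $P_j$ denote the orthogonal projection onto $\mathcal{E}_{\lambda_j}(\oP_4^\prime)$. On the subspace $\hat{\mathcal{P}}\cap({\rm Ker\,}\oP_4^\prime)^\perp=\bigoplus_{\lambda_j>0}\mathcal{E}_{\lambda_j}(\oP_4^\prime)$ the projection $\pi$ vanishes, and since $\oP_4^\prime$, $\pi$, and hence $\oP_4^\prime+t+\pi$ all preserve this subspace, one has $(\oP_4^\prime+t+\pi)^{-1}=(\oP_4^\prime+t)^{-1}$ there, with $(\oP_4^\prime+t)^{-1}P_j=(\lambda_j+t)^{-1}P_j$ for every $j$ with $\lambda_j>0$.

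Next I would check convergence of the operator-valued integral. For $u\in\hat{\mathcal{P}}\cap({\rm Ker\,}\oP_4^\prime)^\perp$, the map $t\mapsto t^{-1/2}(\oP_4^\prime+t+\pi)^{-1}u$ is continuous on $(0,\infty)$ (by the continuous dependence of the resolvent on $t$ noted before the lemma) and, by~\eqref{e-gue140816f}, is bounded in norm by $t^{-1/2}(\min\{\lambda_1,1\}+t)^{-1}\norm{u}$, which is integrable on $(0,\infty)$ — like $t^{-1/2}$ near $0$ and like $t^{-3/2}$ near $\infty$. Hence the Bochner integral $c\int_0^\infty t^{-1/2}(\oP_4^\prime+t+\pi)^{-1}u\,dt$ is well defined. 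Then, writing $u=\sum_{\lambda_j>0}P_ju$ and using $(\oP_4^\prime+t)^{-1}u=\sum_{\lambda_j>0}(\lambda_j+t)^{-1}P_ju$, I would interchange the sum and the integral — legitimate because $\sum_{\lambda_j>0}\norm{P_ju}^2=\norm{u}^2<\infty$, each $\int_0^\infty t^{-1/2}(\lambda_j+t)^{-1}\,dt=c^{-1}\lambda_j^{-1/2}\leq c^{-1}\lambda_1^{-1/2}$ is uniformly bounded, and the partial sums are dominated in $L^1\bigl((0,\infty);\hat{\mathcal{P}}\bigr)$ by $t^{-1/2}(\lambda_1+t)^{-1}\norm{u}$, so dominated convergence for Bochner integrals applies — to obtain
\[ c\int_0^\infty t^{-1/2}(\oP_4^\prime+t+\pi)^{-1}u\,dt = \sum_{\lambda_j>0}\left(c\int_0^\infty t^{-1/2}(\lambda_j+t)^{-1}\,dt\right)P_ju = \sum_{\lambda_j>0}\lambda_j^{-1/2}P_ju = \bigl(\oP_4^\prime\bigr)^{-1/2}u, \]
the last equality being the definition of $\bigl(\oP_4^\prime\bigr)^{-1/2}$ on $({\rm Ker\,}\oP_4^\prime)^\perp$ via the functional calculus (compatible with~\eqref{e-gue141204}). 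Since $u$ was arbitrary, the lemma follows. I do not expect any serious obstacle: the only points requiring care are the observation that $\pi$ drops out on the orthogonal complement of ${\rm Ker\,}\oP_4^\prime$ and the routine dominated-convergence justification for exchanging the spectral sum with the $t$-integral.
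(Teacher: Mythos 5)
Your proposal is correct and follows essentially the same route as the paper: verify the identity on each positive eigenspace via the scalar formula $\lambda^{-1/2}=c\int_0^\infty t^{-1/2}(\lambda+t)^{-1}\,dt$, then pass to general $u\in\hat{\mathcal{P}}\cap({\rm Ker\,}\oP_4^\prime)^\perp$ using the uniform resolvent bound~\eqref{e-gue140816f} to justify the limit (the paper phrases this as convergence of the finite spectral truncations $u_N$, you phrase it as dominated convergence for the Bochner integral, but these are the same argument). No gaps.
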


\begin{proof}
Fix a positive eigenvalue $\lambda\in{\rm Spec\,}\oP_4^\prime$. Let $u\in\mathcal{E}_\lambda(\oP_4^\prime)$. Then, 
\begin{equation}\label{e-gue140815I}
\bigl(\oP_4^\prime\bigr)^{-\frac{1}{2}}u=\frac{1}{\sqrt{\lambda}}u.
\end{equation}
We compute that 
\begin{equation}\label{e-gue140815II}
\Bigr(c\int^\infty_0t^{-\frac{1}{2}}({\rm P'_0\,}+t+\pi)^{-1}dt\Bigr)u = cu\int^\infty_0t^{-\frac{1}{2}}\frac{1}{\lambda+t}dt = \frac{1}{\sqrt{\lambda}}u.
\end{equation}
Hence the conclusion is true on $\mE_\lambda(\oP_4^\prime)$ for all $\lambda\in{\rm Spec\,}\oP_4^\prime$.

Let $u\in\hat{\mathcal{P}}\cap({\rm Ker\,}\oP^\prime_4)^\perp$. For each $N\in\bN$, let $u_N$ be the orthogonal projection of $u$ onto $\bigoplus_{\lambda\leq N}\mE_\lambda(\oP_4^\prime)$.  It follows that $u_N\to u$ and that $\bigl(\oP_4^\prime\bigr)^{-\frac{1}{2}}u_N\to\bigl(\oP_4^\prime\bigr)^{-\frac{1}{2}}u$. From \eqref{e-gue140816f}, we have 
\[c\Bigl(\int^\infty_0t^{-\frac{1}{2}}(\oP^\prime_4+t+\pi)^{-1}dt\Bigr)u_N\To c\Bigl(\int^\infty_0t^{-\frac{1}{2}}(P^\prime_4+t+\pi)^{-1}dt\Bigr)u\]
in $\hat\mP$ as $N\to\infty$.  Together these observations yield the result.
\end{proof}

To proceed, we require some additional symbol spaces.

\begin{defn}\label{d-gue140815}
Let $m$ be real number. The class $S^m_{1,0,d}(T^*M,\bR_+)$ consists of all functions $a(x,\xi,t)\in C^\infty(T^*M\times\bR_+)$ such that for arbitrary multi-indices $\alpha, \beta\in\mathbb N^3_0$, and for any compact set $K\subset M$ there exists $C_{\alpha,\beta,K}>0$ such that
$\abs{\pr^\alpha_x\pr^\beta_\xi a(x,\xi,t)}\leq C_{\alpha,\beta,K}(1+\abs{\xi}+\abs{t}^{\frac{1}{d}})^{m-\abs{\beta}}$ for all $(x,\xi)\in T^*K$, $t\in\bR_+$. Denote 
\[S^{-\infty}(T^*M,\bR_+)=\bigcap_{m\in\bR}S^m_{1,0,d}(T^*M,\bR_+).\]

Let $a_j\in S^{m_j}_{1,0,d}(T^*M,\bR_+)$ for $j\in\{0,1,2,\dotsc\}$ with $m_j\to-\infty$ as $j\to\infty$. Then there exists $a\in S^{m_0}_{1,0,d}(T^*M,\bR_+)$, unique modulo $S^{-\infty}(T^*M,\bR_+)$, such that 
$a-\sum\limits^{k-1}_{j=1}a_j\in S^{m_k}_{1,0,d}(T^*M,\bR_+)$ for $k\in\{0,1,2,\dotsc\}$. If $a$ and $a_j$ have the properties above, we write 
\[\mbox{$a\sim\sum\limits^\infty_{j=0}a_j$ in $S^{m_0}_{1,0,d}(T^*M,\bR_+)$}.\]

Let $S^m_{{\rm cl\,},d}(T^*M,\bR_+)$ be the space of all symbols $a(x,\xi,t)\in S^m_{1,0,d}(T^*M,\bR_+)$ with $a(x,\xi,t)\sim\sum\limits^\infty_{j=0}a_{m-j}(x,\xi,t)$ in $S^m_{1,0,d}(T^*M,\bR_+)$, where $a_{m-j}(x,\xi,t)$ is positively homogeneous  of degree $m-j$ in $(\xi,t^{\frac{1}{d}})$; i.e. 
\[a_{m-j}(x,\lambda\xi,\lambda^dt)=\lambda^{m-j}a_{m-j}(x,\xi,t),\ \ \mbox{for $t\in\bR_+$, $\lambda\geq1$, $\abs{\xi}\geq1$}.\]
\end{defn}

Let $a(x,\xi,t)\in S^m_{{\rm cl\,},d}(T^*M,\bR_+)$.  We construct a pseudodifferential operator $P_t$, depending smoothly on $t$, by
\[(P_tu)(x)=\frac{1}{(2\pi)^3}\int e^{i<x-y,\xi>}a(x,\xi,t)u(y)dyd\xi \quad \text{for all $u\in C^\infty(M)$} . \]
We call $a(x,\xi,t)$ the symbol of $P_t$ and $a_m(x,\xi,t)$ the principal symbol of $P_t$. In this case, we will write $P_t\in L^m_{{\rm cl\,},d}(M,\bR_+)$. 

Let $P_t\in L^{-2}_{{\rm cl\,},2}(M,\bR_+)$. Then $P_t\colon H^s(M)\To H^{s+2}(M)$ is continuous for all $s\in\mathbb Z$ and all $t\in\bR_+$. Let $f(t)$ be a strictly positive continuous function. We write
\[P_t=O(f(t))\colon H^{s_1}(M)\To H^{s_2}(M),\ \ s_1, s_2\in\mathbb Z,\]
if $\norm{P_tu}_{s_2}\leq Cf(t)\norm{u}_{s_1}$ for all $u\in H^{s_1}(M)$ and all $t\in\bR_+$, where $\norm{\cdot}_s$ denotes the standard Sobolev norm of order $s$ and $C>0$ is a constant independent of $t$.

We return to our situation. Put 
\begin{equation}\label{e-gue141204a}
E_2=(2E_1)^2+\hat E_1,
\end{equation} 
where $E_1,\hat E_1\in L^1_{{\rm cl\,}}(M)$ are as in Theorem~\ref{t-gue140814I}. Let $e_2(x,\xi)\in S^2_{{\rm cl\,}}(T^*M)$ be the principal symbol of $E_2$. The following is well-known~\cite[Chapter 2]{Shubin2001}.

\begin{thm}\label{t-gue140816}
There exists $G_t\in L^{-2}_{{\rm cl\,},2}(M,\bR_+)$ depending continuously on $t$ in $L^2(M)$ such that 
\begin{align}
&G_t=O(\frac{1}{1+t})\colon H^s(M)\to H^{s}(M)&&\text{for all $s\in\bZ$},\label{e-gue141204aI}\\
&G_t=O(\frac{1}{\sqrt{1+t}})\colon H^s(M)\to H^{s+1}(M)&&\text{for all $s\in\bZ$},\label{e-gue141204aII}\\
&G_t=O(1)\colon H^s(M)\to H^{s+2}(M)&&\text{for all $s\in\bZ$},\label{e-gue140816}\\
&g_0(x,\xi,t)=\frac{1}{e_2(x,\xi)+t}&&\text{for all $\abs{\xi}\geq1$},\label{e-gue140816I}\\
&(E_2+t)G_t=I+F_t&&\text{for all $t>0$},\label{e-gue140816II}
\end{align}
where $g_0(x,\xi,t)$ denotes the principal symbol of $G_t$ and $F_t$ is a smoothing operator on $M$ depending smoothly on $t$ with the property that for all $m\in\bN_0$, there is a constant $C_m>0$ such that for all $t\in\bR_+$, 
\begin{equation}\label{e-gue140816III}
\abs{F_t(x,y)}_{C^m(M\times M)}\leq C_m\frac{1}{1+t}.
\end{equation}

Moreover, in local coordinates $x$, let $g(x,\xi,t)$ denote the full symbol of $G_t$. Then, for every $\alpha, \beta\in\mathbb N^3_0$, there is a constant $C_{\alpha,\beta}>0$, independent of $t$, such that 
\begin{align}
\label{e-gue141227} \abs{\pr^\alpha_x\pr^\beta_\xi g(x,\xi,t)} & \leq C_{\alpha,\beta}\frac{1}{\sqrt{1+t}}(1+\abs{\xi})^{-1-\abs{\beta}} \quad\text{for all $\abs{\xi}\geq1$}, \\
\label{e-gue141227I} \abs{\pr^\alpha_x\pr^\beta_\xi g(x,\xi,t)} & \leq C_{\alpha,\beta}\frac{1}{1+t}(1+\abs{\xi})^{-\abs{\beta}} \quad\text{for all $\abs{\xi}\geq1$} .
\end{align}
\end{thm}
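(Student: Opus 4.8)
The plan is to build $G_t$ as a parameter‑dependent parametrix for $E_2+t$, following the calculus with a parameter in~\cite[Chapter~2]{Shubin2001}; here the parameter $t$ carries anisotropic weight $d=2$, so that $t^{1/2}$ plays the role of a cotangent variable. The starting point is that $E_2=(2E_1)^2+\hat E_1$ is a classical elliptic operator of order $2$ whose principal symbol $e_2(x,\xi)$ equals $4$ times the square of the principal symbol of $E_1$, which by Theorem~\ref{t-gue140813} is real and, by homogeneity, everywhere positive; hence $e_2(x,\xi)\geq c\lvert\xi\rvert^2$ for some $c>0$ and $e_2(x,\xi)+t>0$ for all $(\xi,t)\neq(0,0)$. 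Thus $e_2+t$ is elliptic in the anisotropic sense, and the only point where its inverse misbehaves is $\xi=0$, $t=0$, which will simply be cut off.

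First I would set up the recursive construction of the full symbol $g\sim\sum_{j\geq0}g_{-2-j}$ of $G_t$, with $g_{-2-j}\in S^{-2-j}_{{\rm cl},2}(T^*M,\bR_+)$. Take $g_0$ to be any element of $S^{-2}_{{\rm cl},2}(T^*M,\bR_+)$ with $g_0(x,\xi,t)=(e_2(x,\xi)+t)^{-1}$ for $\lvert\xi\rvert\geq1$; such a choice exists because the right‑hand side is smooth and positively homogeneous of degree $-2$ in $(\xi,t^{1/2})$ there, and for $\lvert\xi\rvert<1$ one only needs $g_0$ to lie in the symbol class. Writing out the composition formula for the symbol of $(E_2+t)\circ G_t$ and collecting the terms homogeneous of degree $-j$ in $(\xi,t^{1/2})$, one solves successively
\[ (e_2+t)\,g_{-2-j} = -\!\!\sum_{k+\ell<j}\!\! c_{k\ell}(x,\xi,t)\quad\text{(terms built from the symbols $e_{2-k}$ of $E_2$ and the $g_{-2-\ell}$),} \]
each of which lies in $S^{-2-j}_{{\rm cl},2}$ since dividing by $e_2+t$ (away from $\lvert\xi\rvert<1$) costs exactly two anisotropic orders. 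After asymptotically summing the $g_{-2-j}$ in $S^{-2}_{{\rm cl},2}(T^*M,\bR_+)$ and letting $G_t$ be the associated operator, the construction forces $(E_2+t)G_t=I+F_t$ with the symbol of $F_t$ in $S^{-\infty}(T^*M,\bR_+)$. Continuity in $t$ (indeed smoothness) of $G_t$ and $F_t$ is automatic, as $t$ enters throughout as a smooth parameter.

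Finally, all the stated estimates reduce to the single master bound $\lvert\partial^\alpha_x\partial^\beta_\xi g(x,\xi,t)\rvert\lesssim(1+\lvert\xi\rvert+t^{1/2})^{-2-\lvert\beta\rvert}$ encoded in $g\in S^{-2}_{{\rm cl},2}$. Using $1+t^{1/2}\asymp\sqrt{1+t}$ and $1+t^{1/2}\leq1+\lvert\xi\rvert+t^{1/2}$ one obtains that $(1+t)^{k/2}g$ lies in $S^{k-2}_{1,0}(T^*M)$ uniformly in $t$ for $k=0,1,2$; the usual Sobolev continuity of pseudodifferential operators then yields \eqref{e-gue141204aI}, \eqref{e-gue141204aII} and \eqref{e-gue140816}, while \eqref{e-gue140816I} holds by construction and \eqref{e-gue141227}, \eqref{e-gue141227I} are immediate rewritings of the master bound. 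For \eqref{e-gue140816III}, the fact that the symbol of $F_t$ lies in $S^{-\infty}(T^*M,\bR_+)$ means $\lvert\partial^\alpha_x\partial^\beta_\xi\sigma(F_t)\rvert\lesssim(1+\lvert\xi\rvert+t^{1/2})^{-N}$ for every $N$; integrating in $\xi$ and differentiating in $x,y$ gives $\lvert F_t(x,y)\rvert_{C^m(M\times M)}\lesssim(1+t^{1/2})^{-N+3+m}\lesssim(1+t)^{-1}$ on choosing $N$ large.

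The construction and recursion are routine; the only real care needed is the bookkeeping — tracking anisotropic homogeneity degrees through the composition formula so that every $g_{-2-j}$ stays in the correct class, and checking that the asymptotic summation produces a remainder that is $S^{-\infty}$ \emph{uniformly in the parameter}, which is exactly what delivers the $t$‑decay of $F_t$. This is precisely the content of the parameter‑dependent calculus in~\cite[Chapter~2]{Shubin2001}, so the proof consists of recording how that machinery specializes to $E_2+t$.
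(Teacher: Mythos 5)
Your construction is correct and is exactly the parameter-dependent parametrix argument the paper itself relies on: Theorem~\ref{t-gue140816} is stated with no proof beyond the citation of~\cite[Chapter~2]{Shubin2001}, and your recursion for $g\sim\sum_j g_{-2-j}$, the master bound $\lvert\pr^\alpha_x\pr^\beta_\xi g\rvert\lesssim(1+\lvert\xi\rvert+t^{1/2})^{-2-\lvert\beta\rvert}$, and the resulting mapping properties and kernel estimate for $F_t$ are the standard specialization of that calculus to $E_2+t$. The one point to tighten is that positivity of $e_2=4q^2$ does not follow from homogeneity and realness alone but from the lower bound $q(x,\xi)\geq c\lvert\xi\rvert$ built into the choice of symbol in~\eqref{e-gue140813aII}.
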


We introduce some notations. Let $\vartheta(x,y)$ denote the Carnot--Carath\'eodory distance on $(M^3,T^{1,0}M,\theta)$.  Let $(z,t)$ be CR normal coordinates defined in a neighborhood of $p\in M$ such that $(z(p),t(p))=(0,0)$. Define $\rho^4(z,t)=\abs{z}^4+t^2$. It is easy to see (cf.\ \cite[Section~3]{HsiaoYung2014}) that for points $x$ sufficiently close to $p$, we have
\[ \vartheta(x,p) \simeq \rho(x). \]
Denote by $B(x,r)$ the non-isotropic ball $\{y \in M \colon \vartheta(x,y) < r\}$ of radius $r$ centered at $x$. 
Let $k\in\mathbb N$. We denote by $\nabla^k_b$ any differential operator of the form $L_1\ldots L_k$, where $L_j\in C^\infty(M,T^{1,0}M\oplus T^{0,1}M)$ satisfy $\langle\,L_j\,|\,L_j\,\rangle\leq1$ for $j=1,\dotsc,k$.

Next, we define a class of (non-isotropic) smoothing operators of order $j$. For our purposes, it suffices to restrict to the case when $0 \leq j < 4$.

Recall that a smooth function $\phi$ on $M$ is said to be a \emph{normalized bump function} on $B(x,r)$ if $\supp\phi\subset B(x,r)$ and
\begin{equation}
\label{eqn:normalized_bump}
\norm{\nabla_b^k \phi}_{L^{\infty}(B(x,r))} \leq C_k r^{-k}
\end{equation} 
for all $k \geq 0$; here $C_k>0$ are absolute constants independent of $r$.  If~\eqref{eqn:normalized_bump} only holds for $0\leq k\leq N$ for some large integer $N$, we say that $\phi$ is a normalized bump function of order $N$ in $B(x,r)$.

Suppose that $A$ is a continuous linear operator $A \colon C^{\infty}(M) \to C^{\infty}(M)$ and its adjoint $A^*$ is also a continuous map  $A^* \colon C^{\infty}(M) \to C^{\infty}(M)$. We say that $A$ is a smoothing operator of order $j$,  $0 \leq j < 4$, if 
\begin{enumerate}
 \item there exists a kernel $A(x,y)$, defined and smooth away from the diagonal in $M\times M$, such that 
 \begin{equation} \label{eq:Tkernelrep}
 Af(x) = \int_{M} A(x,y) f(y)dv_M(y)
 \end{equation}
 for any $f \in C^{\infty}(M)$, and every $x\not\in\supp f$, where $dv_M=\theta\wedge d\theta$;
 \item for all $x\not=y$, the kernel $A(x,y)$ satisfies
 \[ |(\nabla_b)_x^{\alpha_1} (\nabla_b)_y^{\alpha_2} A(x,y)| \lesssim_{\alpha} \vartheta(x,y)^{-4+j-|\alpha|} \quad \text{for all $|\alpha| = |\alpha_1| + |\alpha_2|$}; \]
 \item the operators $A$ and $A^*$ satisfy the following cancellation conditions of order $j$: if $\phi$ is a normalized bump function in $B(x,r)$, then
 \begin{align*}
  \|\nabla_b^{\alpha} A\phi \|_{L^{\infty}(B(x,r))} & \lesssim_{\alpha} r^{j-|\alpha|}, \\
  \|\nabla_b^{\alpha} A^*\phi \|_{L^{\infty}(B(x,r))} & \lesssim_{\alpha} r^{j-|\alpha|}.
 \end{align*}
\end{enumerate}

Since $M$ is embeddable, $\Box_b$ has $L^2$ closed range. Let 
\begin{equation}\label{e-gue141204b}
N\colon L^2(M)\To{\rm Dom\,}\Box_b
\end{equation}
be the partial inverse of $\Box_b$ and let $N(x,y)$ be the distributional kernel of $N$. The following is well-known (see~\cite[Theorem~2.2]{HsiaoYung2014})

\begin{thm} \label{thm2.1}
The Szeg\H{o} projection $S$ and the partial inverse $N$ of $\Box_b$ are smoothing operators of orders $0$ and $2$, respectively. 
\end{thm}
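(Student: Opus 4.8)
The plan is to prove the two assertions separately; in each case one reduces to the oscillatory-integral description of the Szeg\H{o} kernel and to the non-isotropic dilation structure underlying the definition of a smoothing operator of order $j$. The argument follows~\cite{HsiaoYung2014}, whose Theorem~2.2 is precisely the present statement.

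For the Szeg\H{o} projection $S$ I would start from the representation of Theorem~\ref{t-gue140812I}, namely $S(x,y)\equiv\int^\infty_0 e^{i\varphi(x,y)s}a(x,y,s)\,ds$ with $a\in S^{1}_{{\rm cl\,}}(M\times M\times(0,\infty))$, $a\sim\sum_j a_j(x,y)s^{1-j}$, and $\varphi$ as in Theorem~\ref{t-gue140812}. The first step is to upgrade the qualitative properties of $\varphi$ --- $\Imaginary\varphi\geq0$, $\varphi$ vanishing exactly on the diagonal, and $d_x\varphi|_{x=y}=-\theta(x)$ --- to the quantitative bounds $\abs{\varphi(x,y)}+\Imaginary\varphi(x,y)\gtrsim\vartheta(x,y)^2$ and $\abs{\nabla_b^\alpha\varphi(x,y)}\lesssim\vartheta(x,y)^{2-\abs{\alpha}}$ for $\abs{\alpha}\geq1$, valid for $x,y$ in a fixed small neighbourhood; I would obtain these by expressing $\varphi$ in CR normal coordinates and comparing with the Heisenberg phase. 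The second step is the pointwise kernel estimate (condition (2) with $j=0$): differentiate $S(x,y)$ under the integral, expand by the chain rule, and integrate by parts repeatedly in $s$ via $e^{i\varphi s}=(i\varphi)^{-1}\partial_s e^{i\varphi s}$. Each integration by parts trades a factor $\abs{\varphi}^{-1}\lesssim\vartheta^{-2}$ for a derivative $\partial_s$ falling on $a$ (gaining $s^{-1}$) or on $\varphi$ (harmless); since $\int^\infty_0 s^{1-k}e^{-c\vartheta^2 s}\,ds\simeq\vartheta^{2k-4}$, summing over $k$ yields $\abs{(\nabla_b)_x^{\alpha_1}(\nabla_b)_y^{\alpha_2}S(x,y)}\lesssim\vartheta(x,y)^{-4-\abs{\alpha}}$ off the diagonal. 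The third step is the cancellation conditions (condition (3)) for $S$ and $S^*=S$: given a normalized bump function $\phi$ on $B(x,r)$, rescale the $s$-integration by $r^{-2}$ and run the same integration-by-parts estimates on $S\phi$. Together these show $S$ is a smoothing operator of order $0$.

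For the partial inverse $N$ I would argue microlocally. Away from the characteristic ray $\Sigma^-$ the operator $\Box_b$ is elliptic of order $2$, so there $N$ coincides with a classical pseudodifferential operator of order $-2$ and satisfies conditions (1)--(3) with room to spare. Near $\Sigma^-$ I would build a relative parametrix $Q$ for $\Box_b$ modelled on the Folland--Stein relative fundamental solution of the Kohn Laplacian on the Heisenberg group $\bH^1$; because that model kernel is positively homogeneous of degree $-2$ under the non-isotropic dilations --- hence comparable to $\vartheta^{-2}$ --- and satisfies the analogous cancellation estimates, $Q$ is a smoothing operator of order $2$, and after finitely many iterations of the parametrix construction $\Box_b Q=I-S+R$ with $R$ genuinely smoothing. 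Combining this with $N\Box_b=I-S$ and $SN=0$ expresses $N$, modulo genuinely smoothing operators, as $Q-SQ$, and one concludes by the composition calculus for these non-isotropic classes: the cancellation conditions guarantee that orders add under composition, thanks to the homogeneous dimension $4$, so $Q$ and $SQ$ are both smoothing of order $2$. This shows $N$ is a smoothing operator of order $2$.

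The main obstacle, in both parts, is the same: the stationary-phase analysis of the oscillatory integral defining $S$ in the presence of the complex phase $\varphi$ with $\Imaginary\varphi\geq0$ --- keeping careful track of all non-isotropic derivatives $\nabla_b$ and of the cancellation conditions --- and, for $N$, the Heisenberg-calculus parametrix near $\Sigma^-$ together with the composition lemma for the smoothing classes. These are exactly the technical points carried out in~\cite{HsiaoYung2014}, and in practice I would cite~\cite[Theorem~2.2]{HsiaoYung2014}; the outline above is how that theorem is proved.
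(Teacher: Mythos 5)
The paper offers no proof of this statement at all: it is quoted as well-known with a citation to \cite[Theorem~2.2]{HsiaoYung2014}, which is exactly the reference you fall back on. Your sketch of how that cited theorem is established (the Boutet de Monvel--Sj\"ostrand phase estimates $\abs{\varphi}\gtrsim\vartheta^2$ and $s$-integration by parts for $S$, and a Folland--Stein-type parametrix plus the composition calculus for $N$) is a plausible and correctly calibrated reconstruction, so your proposal is consistent with, and more detailed than, what the paper does.
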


We also need to study one-parameter families of smooth operators.

\begin{defn}\label{d-gue141226}
Let $A_t$ be a $t$-dependent smoothing operator of order $j$, $0\leq j<4$, where $t\in\mathbb R_+$. Let $f(t)$ be a positive continuous function of $t\in\mathbb R_+$. We say that $A_t$ is a smoothing operator of order $j$ with size $f(t)$ if for every $m\in\mathbb N_0$ and any normalized bump function $\phi$ in $B(x,r)$, there are constants $C_m,C_{m,r}>0$, independent of $t$, such that for all $t\in\mathbb R_+$,
\begin{align*}
 \lvert(\nabla_b)_x^{\alpha_1} (\nabla_b)_y^{\alpha_2}A_t(x,y)\rvert & \leq C_{m}f(t)\vartheta(x,y)^{-4+j-|\alpha|} \quad \text{for all $|\alpha| = |\alpha_1| + |\alpha_2|\leq m$} , \\
 \lVert\nabla_b^{\alpha}A_t\phi \rVert_{L^{\infty}(B(x,r))} & \leq f(t)C_{m,r}r^{j-|\alpha|} \quad \text{for all $\abs{\alpha}\leq m$} , \\
 \lVert\nabla_b^{\alpha}A_t^*\phi \rVert_{L^{\infty}(B(x,r))} & \leq f(t)C_{m,r} r^{j-|\alpha|} \quad \text{for all $\abs{\alpha}\leq m$} .
\end{align*}
\end{defn}

We also need the following result~\cite[Theorem~2.2 and Theorem~2.3]{HsiaoYung2014}.

\begin{thm} \label{thm2.2}
Let $A_t$ and $B_t$ be $t$-dependent smoothing operators of orders $j_1$ and $j_2$ with sizes $f(t)$ and $g(t)$, respectively, where $j_1, j_2 \geq 0$, $j_1 + j_2 < 4$, and $f(t), g(t)$ are positive continuous functions. Then $A_t\circ B_t$ is a smoothing operator of order $j_1 + j_2$ with size $f(t)g(t)$.
\end{thm}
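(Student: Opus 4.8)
The plan is to verify directly the three defining properties in Definition~\ref{d-gue141226} for the composition $A_t\circ B_t$, with order $j_1+j_2$ and size $f(t)g(t)$, following the composition estimates of Hsiao and Yung~\cite{HsiaoYung2014} and observing that the parameter $t$ enters only through the multiplicative size factors, so that every constant stays uniform in $t$. Formally the Schwartz kernel is
\[ (A_t\circ B_t)(x,y) = \int_M A_t(x,z)\,B_t(z,y)\,dv_M(z), \]
and the first task is to make sense of this integral off the diagonal and to bound it together with its $\nabla_b$-derivatives.

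For the kernel estimate I would fix $x\neq y$, put $r=\vartheta(x,y)$, and split $M$ into $B(x,r/2)$, $B(y,r/2)$, and the complement $\Omega$. On $\Omega$ a dyadic-annulus decomposition in $\vartheta(x,z)$ together with the pointwise bounds $|A_t(x,z)|\lesssim f(t)\vartheta(x,z)^{-4+j_1}$ and $|B_t(z,y)|\lesssim g(t)\vartheta(z,y)^{-4+j_2}$, the doubling of $dv_M$, and the hypothesis $j_1+j_2<4$ give a contribution $\lesssim f(t)g(t)\,r^{-4+j_1+j_2}$. On $B(x,r/2)$ one has $\vartheta(z,y)\simeq r$, so $z\mapsto B_t(z,y)$ is, after division by $g(t)r^{-4+j_2}$ and a smooth cutoff to $B(x,r)$, a normalized bump function on $B(x,r)$; applying the cancellation condition for $A_t$ bounds this piece by $\lesssim f(t)g(t)\,r^{-4+j_1+j_2}$, with the annular leftover $B(x,r)\setminus B(x,r/2)$ absorbed into the $\Omega$-type estimate, and the region $B(y,r/2)$ is handled symmetrically with the roles of $x,y$ and of $A_t,B_t$ replaced by $B_t^\ast,A_t^\ast$. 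Differentiating under the integral puts $(\nabla_b)_x^{\alpha_1}$ on $A_t$ and $(\nabla_b)_y^{\alpha_2}$ on $B_t$, and the same three-region scheme yields $|(\nabla_b)_x^{\alpha_1}(\nabla_b)_y^{\alpha_2}(A_t\circ B_t)(x,y)|\lesssim f(t)g(t)\,\vartheta(x,y)^{-4+j_1+j_2-|\alpha|}$.

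For the cancellation conditions I would take a normalized bump function $\phi$ on $B(x,r)$ and write $(A_t\circ B_t)\phi = A_t(B_t\phi)$, decomposing $B_t\phi$ into its restriction to $B(x,2r)$ plus a tail on $M\setminus B(x,2r)$. By the cancellation condition for $B_t$, the first piece divided by $g(t)r^{j_2}$ is (after a smooth cutoff) a normalized bump function on $B(x,2r)$, so applying $A_t$ and invoking its own cancellation condition gives an $L^\infty(B(x,r))$-bound of the desired form; the tail obeys $|(B_t\phi)(z)|\lesssim g(t)\,r^{4}\,\vartheta(x,z)^{-4+j_2}$ for $z\notin B(x,2r)$ by the kernel estimate for $B_t$, and applying $A_t$ to it, using the kernel estimate for $A_t$ and the convergence $\int_{\vartheta(x,z)\geq 2r}\vartheta(x,z)^{-8+j_1+j_2}\,dv_M(z)\lesssim r^{-4+j_1+j_2}$ — valid precisely because $j_1+j_2<4$ — again gives the right bound, with an extra factor $\vartheta(x,z)^{-|\alpha|}$ when derivatives fall on $A_t$. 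Together these yield $\|\nabla_b^\alpha(A_t\circ B_t)\phi\|_{L^\infty(B(x,r))}\lesssim f(t)g(t)\,r^{j_1+j_2-|\alpha|}$; the adjoint estimate follows by symmetry from $(A_t\circ B_t)^\ast = B_t^\ast\circ A_t^\ast$ and the hypothesis that $A_t^\ast$, $B_t^\ast$ are smoothing of orders $j_1$, $j_2$ with sizes $f(t)$, $g(t)$. This verifies Definition~\ref{d-gue141226} for $A_t\circ B_t$ and proves the theorem.

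I expect the main obstacle to be the bookkeeping in the cancellation step: one must split $B_t\phi$ into a genuine normalized-bump part and a decaying tail, check that the rescaling by $g(t)r^{j_2}$ produces a bona fide normalized bump function, and control the tail integral — and it is exactly the constraint $j_1+j_2<4$ that keeps the relevant integrals over $M$ convergent with the correct power of $r$. By contrast, uniformity in $t$ requires no extra work, since the hypothesized constants are $t$-independent and $f(t)$, $g(t)$ simply factor through every estimate.
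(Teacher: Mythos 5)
Your outline is correct, but note that the paper does not actually prove Theorem~\ref{thm2.2}: the statement is imported verbatim from \cite{HsiaoYung2014} (Theorems~2.2 and~2.3 there, in the spirit of \cite{SteinYung13}), so there is no internal proof to compare against. What you have written is the standard composition argument for non-isotropic smoothing operators on a space of homogeneous dimension $4$, and it is essentially the argument of the cited source: the far region is controlled by pure size estimates and the convolution inequality for the exponents $-4+j_1$ and $-4+j_2$, whose convergence is exactly the hypothesis $j_1+j_2<4$; the two near-diagonal balls are handled by renormalizing the frozen kernel of one factor into a normalized bump function and invoking the cancellation condition of the other factor (or of its adjoint); and the cancellation condition for the composition follows from your bump-plus-tail decomposition of $B_t\phi$. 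The one point deserving more care than your sketch gives it is the identification of the kernel of $A_t\circ B_t$ off the diagonal with $\int_M A_t(x,z)B_t(z,y)\,dv_M(z)$: when $j_1=0$ or $j_2=0$ this integral is not absolutely convergent near $z=x$ or $z=y$, so the near-diagonal contributions must be \emph{defined} through the cancellation conditions (i.e.\ as $A_t$ applied to a bump function) rather than as Lebesgue integrals --- which is what your three-region decomposition does implicitly, but should be said explicitly when verifying property (1) of the definition. Your remark that $t$ enters only through the multiplicative factors $f(t)$, $g(t)$ is correct and is the only content that Definition~\ref{d-gue141226} adds to the $t$-independent statement.
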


Let $P_t\in L^{-2}_{{\rm cl\,},2}(M,\bR_+)$, $Q_t\in L^{-1}_{{\rm cl\,},2}(M,\bR_+)$, and $R_t\in L^0_{{\rm cl\,},2}(M,\bR_+)$. 
Let $p(x,\xi,t)$, $q(x,\xi,t)$ and $r(x,\xi,t)$ be symbols of $P_t$, $Q_t$ and $R_t$ respectively. It is easy to see that for every $\alpha, \beta\in\mathbb N^3_0$, there is a constant $C_{\alpha,\beta}>0$ independent of $t$ such that 
\begin{equation}\label{e-gue141227II}
\begin{split}
&\abs{\pr^\alpha_x\pr^\beta_\xi p(x,\xi,t)}\leq C_{\alpha,\beta}\frac{1}{1+t}(1+\abs{\xi})^{-\abs{\beta}}\quad \text{for all $\abs{\xi}\geq1$},\\
&\abs{\pr^\alpha_x\pr^\beta_\xi q(x,\xi,t)}\leq C_{\alpha,\beta}\frac{1}{\sqrt{1+t}}(1+\abs{\xi})^{-\abs{\beta}} \quad \text{for all $\abs{\xi}\geq1$} ,\\
&\abs{\pr^\alpha_x\pr^\beta_\xi r(x,\xi,t)}\leq C_{\alpha,\beta}(1+\abs{\xi})^{-\abs{\beta}}\quad \text{for all $\abs{\xi}\geq1$}.
\end{split}
\end{equation}

Using the following lemma, we establish an analogue of Theorem~\ref{thm2.1}.

\begin{lem}\label{l-gue150626}
Consider $B(x,r)$, where $x\in M$ and $r>0$ is a small constant. Let $\chi_r\in C^\infty_0((B(x,2r))$ be a normalized bump function on $B(x,2r)$ with $\chi_r\equiv1$ on $B(x,r)$. There is a constant $C>0$ independent of $r$ such that 
\begin{equation}\label{e-gue150626}
\norm{f}_{L^\infty(B(x,r))}\leq Cr\sum^3_{j=0}\norm{\nabla^j_b(\chi_rf)} \quad \text{for all $f\in C^\infty(M)$}.
\end{equation}
\end{lem}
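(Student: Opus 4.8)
The plan is to derive \eqref{e-gue150626} from the critical Folland--Stein Sobolev embedding on the Heisenberg group, carried down to the scale $r$ by a parabolic rescaling. First I would reduce the statement: since $\chi_r\equiv1$ on $B(x,r)$ we have $f=\chi_rf$ there, hence $\norm{f}_{L^\infty(B(x,r))}\leq\norm{\chi_rf}_{L^\infty(M)}$, so it suffices to bound $\norm{u}_{L^\infty(M)}$ for $u:=\chi_rf\in C^\infty_0(B(x,2r))$. In fact I would prove the stronger bound $\norm{u}_{L^\infty(M)}\leq Cr\norm{\nabla^3_bu}$, since $\nabla^3_bu$ already appears on the right-hand side of \eqref{e-gue150626}.

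Next I would work in CR normal coordinates $(z,t)$ centered at $x$ (which exist, with all the estimates below uniform in $x$, since $M$ is compact). In these coordinates $\theta\wedge d\theta$ is comparable to Lebesgue measure, $B(x,2r)\subset\set{\rho<C_0r}$ for an absolute constant $C_0$, and --- the crucial point --- under the parabolic dilations $\delta_r(z,t)=(rz,r^2t)$ the horizontal vector fields pull back to vector fields that converge in $C^\infty$ on compact sets, uniformly for $r$ small, to the left-invariant horizontal fields of $\bH^1$ (cf.\ \cite[Section~3]{HsiaoYung2014}). Writing $\tilde u:=u\circ\delta_r$, which is supported in the fixed set $\set{\rho<C_0}$, one factor of $r$ per horizontal derivative together with a factor $r^4$ from the volume form gives
\[ \norm{\nabla^j_b\tilde u}_{L^2}\simeq r^{j-2}\norm{\nabla^j_bu}_{L^2},\qquad \norm{\tilde u}_{L^\infty}=\norm{u}_{L^\infty}, \]
with implied constants independent of $r$.

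I would then invoke the unit-scale theory. Since the homogeneous dimension of $\bH^1$ is $Q=4$ and $3>Q/2$, the Folland--Stein embedding gives $\norm{\tilde u}_{L^\infty}\leq C\sum^3_{j=0}\norm{\nabla^j_b\tilde u}_{L^2}$; and since $\tilde u$ is compactly supported in a fixed bounded open set, positivity of the first Dirichlet eigenvalue of the hypoelliptic sub-Laplacian there yields the Poincar\'e estimates $\norm{\nabla^j_b\tilde u}_{L^2}\leq C\norm{\nabla^{j+1}_b\tilde u}_{L^2}$ for $0\leq j\leq2$, whence $\norm{\tilde u}_{L^\infty}\leq C\norm{\nabla^3_b\tilde u}_{L^2}$. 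Undoing the rescaling, $\norm{u}_{L^\infty}\leq Cr\norm{\nabla^3_bu}_{L^2}\leq Cr\sum^3_{j=0}\norm{\nabla^j_bu}_{L^2}$, which is \eqref{e-gue150626}.

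The main obstacle is the uniformity in $r$ (and in $x$) of the rescaling step: one needs the rescaled pseudohermitian structure to converge to the flat Heisenberg structure well enough that the comparison of volume forms, the comparison of horizontal-vector-field norms, and the Sobolev and Poincar\'e constants on $\set{\rho<C_0}$ can all be taken independent of $r$. This is exactly what CR normal coordinates provide; once it is in place, the remaining ingredients are the standard sub-elliptic Sobolev and Poincar\'e inequalities for the Heisenberg model.
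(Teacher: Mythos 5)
Your route is genuinely different from the paper's. The paper argues globally and operator-theoretically: writing $B=(\Delta_b+I)^{-2}$, it uses $\chi_rf=B(\Delta_b+I)^2\chi_rf=\sum_{j=0}^4B\nabla_b^j(\chi_rf)$, groups the top term as $(B\nabla_b)\nabla_b^3(\chi_rf)$, and then applies Cauchy--Schwarz pointwise at $x_0\in B(x,r)$ together with the kernel bounds $\abs{B(x_0,y)},\abs{(B\nabla_b)(x_0,y)}\lesssim\vartheta(x_0,y)^{-1}$ from~\cite{HsiaoYung2014}; since the integrand is supported in a ball of radius $\sim r$ and $\int_{B(x_0,Cr)}\vartheta(x_0,y)^{-2}\,dv\lesssim r^2$, the factor $r$ falls out of the square root. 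This has the advantage that all the uniformity is already packaged in the fixed kernel estimates for $B$ and $B\nabla_b$, so no rescaling or model comparison is needed. Your approach instead localizes to the Heisenberg model by parabolic dilation and invokes the unit-scale Folland--Stein embedding ($3>Q/2$ with $Q=4$) plus Poincar\'e; the flat-model steps and the scaling arithmetic $\norm{\nabla_b^j\tilde u}=r^{j-2}\norm{\nabla_b^ju}$ are all correct, and this would in fact yield the stronger conclusion $\norm{u}_{L^\infty}\leq Cr\norm{\nabla_b^3u}$.

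The gap is precisely at the step you flag and then wave through: transferring the unit-scale Sobolev and Poincar\'e constants from the flat fields to the rescaled fields. The difficulty is the anisotropy. In CR normal coordinates the rescaled horizontal field differs from the flat one by $o(1)$ times first-order operators that include $\partial_t$, and $\partial_t$ has \emph{non-isotropic order two} (it is a commutator of two horizontal fields). Consequently a naive norm comparison of the form $\norm{\nabla_{b,\mathrm{flat}}^3\tilde u}\leq\norm{\nabla_{b,r}^3\tilde u}+o(1)\cdot(\cdots)$ produces error terms like $o(1)\norm{\partial_t\nabla_b^2\tilde u}\sim o(1)\norm{\nabla_b^4\tilde u}$ --- a fourth derivative, which neither appears on the right-hand side of \eqref{e-gue150626} nor is controlled by $\norm{\nabla_b^3\tilde u}$ for functions oscillating at scales much finer than $r$. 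So ``the comparison of horizontal-vector-field norms can be taken independent of $r$'' is not something CR normal coordinates hand you by a triangle inequality; you would need either the uniform Sobolev/Poincar\'e theory for a $C^\infty$-convergent family of H\"ormander vector fields (doubling plus representation formulas \`a la Nagel--Stein--Wainger, with constants depending only on quantities that are uniform in $r$), or a perturbation carried out at the level of the fundamental solution rather than at the level of derivative norms. Either fix is real work; the paper's kernel argument sidesteps it entirely.
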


\begin{proof}
Consider $\Delta_b+I\colon {\rm Dom\,}(\Delta_b+I)\subset L^2(M)\To L^2(M)$, where ${\rm Dom\,}(\Delta_b+I)=\set{u\in L^2(M)\colon (\Delta_b+I)u\in L^2(M)}$. It is clear that $\Delta_b+I$ is injective, self-adjoint, has $L^2$ closed range and hence is surjective. Let $H\colon L^2(M)\To{\rm Dom\,}(\Delta_b+I)$ be the inverse of $\Delta_b+I$. Put $B:=H^2\colon L^2(M)\To L^2(M)$. We have 
\begin{equation}\label{e-gue150626I}
B(\Delta_b+I)^2=I\ \ \mbox{on $C^\infty(M)$}.
\end{equation}
It is known that (see~\cite[Appendix~A]{HsiaoYung2014}) 
\begin{equation}\label{e-gue150626a}
\begin{split}
&\mbox{$B$ is a smoothing operator of order $4-\varepsilon$ for every $\varepsilon>0$},\\
&\mbox{$B\nabla_b$ is a smoothing operator of order $3$}.
\end{split}
\end{equation}
Let $f\in C^\infty(M)$. From \eqref{e-gue150626I}, we have 
\begin{equation}\label{e-gue150626II}
\chi_rf=B(\Delta_b+I)^2\chi_rf=\sum^4_{j=0}B\nabla^j_b\chi_rf.
\end{equation}
Fix $x_0\in B(x,r)$. From \eqref{e-gue150626II}, we have 
\begin{align*}
f(x_0)&=(\chi_rf)(x_0)=(B\nabla^4_b\chi_rf)(x_0)+\sum^3_{j=0}(B\nabla^j_b\chi_rf)(x_0)\\
&=\int(B\nabla_b)(x_0,y)\nabla^3_b(\chi_rf)(y)dv_M(y)+\sum^3_{j=0}\int B(x_0,y)\nabla^j_b(\chi_rf)(y)dv_M(y),
\end{align*}
where $(B\nabla_b)(x,y)$ and $B(x,y)$ denote the distribution kernels of $B\nabla_b$ and $B$ respectively. We then check that 
\begin{equation}\label{e-gue150626IV}
\begin{split}
\abs{f(x_0)}\leq&\Bigr(\int_{B(x_0,2r)}\abs{(B\nabla_b)(x_0,y)}^2dv_M(y)\Bigr)^{\frac{1}{2}}\norm{\nabla^3_b(\chi_rf)}\\
&\quad\quad+\sum^3_{j=0}\Bigr(\int_{B(x_0,2r)}\abs{B(x_0,y)}^2dv_M(y)\Bigr)^{\frac{1}{2}}\norm{\nabla^j_b(\chi_rf)}.
\end{split}
\end{equation}
From \eqref{e-gue150626a}, we can check that 
\begin{equation}\label{e-gue150626V}
\begin{split}
&\int_{B(x_0,2r)}\abs{(B\nabla_b)(x_0,y)}^2dv_M(y)\leq C_0\int_{B(x_0,2r)}\vartheta(x_0,y)^{-2}dy\leq C_1r^2,\\
&\int_{B(x_0,2r)}\abs{B(x_0,y)}^2dv_M(y)\leq C_2\int_{B(x_0,2r)}\vartheta(x_0,y)^{-2}dy\leq C_3r^2,
\end{split}
\end{equation}
where $C_0, C_1, C_2, C_3$ are positive constants independent of $r$ and the point $x_0$. From \eqref{e-gue150626V} and \eqref{e-gue150626IV}, \eqref{e-gue150626} follows. 
\end{proof}

\begin{thm}\label{t-gue141227}
The operators $SP_t$, $SQ_t$ and $SR_t$ are smoothing operators of orders $0$ with sizes $\frac{1}{1+t}$, $\frac{1}{\sqrt{1+t}}$ and $1$, respectively. Similarly, the operators $P_tS$, $Q_tS$ and $R_tS$ are smoothing operators of orders $0$ with sizes $\frac{1}{1+t}$, $\frac{1}{\sqrt{1+t}}$ and $1$, respectively.
\end{thm}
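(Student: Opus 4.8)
The plan is to reduce everything to a single statement about compositions of the Szeg\H{o} projection with classical pseudodifferential operators, and then to read off Definition~\ref{d-gue141226} from the Szeg\H{o}-type oscillatory integral representation of that composition together with Lemma~\ref{l-gue150626}. Precisely, it suffices to prove: if $B_t$ is a $t$-family of classical pseudodifferential operators on $M$ whose symbols satisfy $|\pr^\alpha_x\pr^\beta_\xi b_t(x,\xi)|\le C_{\alpha\beta}h(t)(1+|\xi|)^{-|\beta|}$ for all $|\xi|\ge 1$, then $S\circ B_t$ is a smoothing operator of order $0$ with size $h(t)$. Taking $(B_t,h)$ to be $(P_t,(1+t)^{-1})$, $(Q_t,(1+t)^{-1/2})$ and $(R_t,1)$ --- legitimate by \eqref{e-gue141227II} --- gives the assertions for $SP_t$, $SQ_t$, $SR_t$. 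Since the conditions in Definition~\ref{d-gue141226} are stable under taking formal adjoints, and $(B_tS)^\ast = SB_t^\ast$ with $B_t^\ast$ again a classical pseudodifferential operator obeying the same symbol bounds, the assertions for $P_tS$, $Q_tS$, $R_tS$ follow from those for $SP_t^\ast$, $SQ_t^\ast$, $SR_t^\ast$.

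To identify $S\circ B_t$, recall from Theorem~\ref{t-gue140812I} that $S(x,y)\equiv\int_0^\infty e^{i\varphi(x,y)s}a(x,y,s)\,ds$ with $a\in S^1_{{\rm cl\,}}(M\times M\times(0,\infty))$ and $\varphi$ as in Theorem~\ref{t-gue140812}. Writing $B_t=B_t'+F_t$ with $B_t'$ properly supported and $F_t$ classical smoothing (with $C^m$-kernel bounds $\lesssim h(t)$, since $F_t$ is built from $b_t$), the operator $S\circ F_t$ is smoothing with $C^m$-kernel bounds $\lesssim h(t)$ and hence a smoothing operator of any order ${}<4$ with size $h(t)$; so we may assume $B_t$ properly supported. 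Composing $S$ with $B_t$ and applying the stationary phase method --- as in the composition of a complex Fourier integral operator of Szeg\H{o} type with a classical pseudodifferential operator \cite{BoutetSjostrand1976,Hsiao2010,HsiaoMarinescu2014} --- yields $S\circ B_t\equiv\int_0^\infty e^{i\varphi(x,y)s}\tilde b_t(x,y,s)\,ds$ with $\tilde b_t\in S^1_{1,0}(M\times M\times(0,\infty))$. The point is that the critical set of the composite phase lies over $\Sigma^-=\set{(x,\lambda\theta(x))\in T^\ast M;\,\lambda<0}$, on a conic neighbourhood of which $|\xi|\simeq s$, so that the amplitude $\tilde b_t$ is assembled from derivatives of $a$ and derivatives of the symbol of $B_t$ evaluated at $|\xi|\simeq s$; inserting the symbol bound on $B_t$, all symbol seminorms of $\tilde b_t$ are $\lesssim h(t)$, uniformly in $t$, and the remainder is smoothing with $C^m$-kernel bounds $\lesssim h(t)$.

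It remains to show that a Szeg\H{o}-type operator $A=\int_0^\infty e^{i\varphi s}b(x,y,s)\,ds$ with $b\in S^1_{1,0}$ whose symbol seminorms are bounded by $C$ is a smoothing operator of order $0$ with size $C$. The pointwise bounds $|(\nabla_b)_x^{\alpha_1}(\nabla_b)_y^{\alpha_2}A(x,y)|\lesssim C\,\vartheta(x,y)^{-4-|\alpha|}$ follow by repeated integration by parts in $s$, using ${\rm Im\,}\varphi\ge0$, the comparison between the phase $\varphi$ and the distance $\vartheta$, and the fact that $d_x\varphi|_{x=y}=-\theta$ annihilates $T^{1,0}M\oplus T^{0,1}M$ (so each $\nabla_b$ falling on the phase produces a factor vanishing on the diagonal); this is precisely the estimate underlying the statement that $S$ is a smoothing operator of order $0$ (Theorem~\ref{thm2.1}, \cite{HsiaoYung2014}), now with $a$ replaced by $b$ and the seminorm bound $C$ carried through. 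For the cancellation conditions, note that $A$ and $A^\ast$ are bounded on $L^2(M)$ with norm $\lesssim C$, being compositions of the $L^2$-bounded operator $S$ and a classical pseudodifferential operator of nonpositive order; more generally they map $\Delta_b$-Sobolev spaces to themselves with norm $\lesssim C$. Given a normalized bump function $\phi$ on $B(x,r)$, Lemma~\ref{l-gue150626} applied to $\nabla_b^\alpha A\phi$ bounds $\norm{\nabla_b^\alpha A\phi}_{L^\infty(B(x,r))}$ by $L^2$-norms of $\nabla_b^j(\chi_r\nabla_b^\alpha A\phi)$ for $j\le3$, and distributing the derivatives over $\chi_r$ and the oscillatory-integral representation of $A$ bounds these by $C$ times the appropriate power of $r$; the argument for $A^\ast$ is identical. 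Taking $C\asymp h(t)$ finishes the proof.

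The main obstacle is the composition step: one must check that every term and every remainder in the stationary-phase expansion of $S\circ B_t$ genuinely carries the weight $h(t)$ supplied by \eqref{e-gue141227II}, rather than the $O(1)$ weight that would follow from treating $B_t$ merely as a fixed bounded family --- this forces one to keep track of the $\xi$-decay of the symbol of $B_t$ at the scale $|\xi|\simeq s$ at each step, and to verify that the remainders are smoothing with $h(t)$-size. A secondary, more routine point is the commutation of $\nabla_b$ past $S$, $B_t$ and the cutoffs $\chi_r$ in the cancellation estimates; here one uses that $S$ annihilates $\ddbar_b$, so that half of the relevant commutators vanish identically.
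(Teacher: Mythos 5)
Your proposal follows essentially the same route as the paper: the cancellation conditions are obtained exactly as in the paper's proof, from Lemma~\ref{l-gue150626} together with the mapping properties $\nabla_b^{2j}S\colon H^s\to H^{s-j}$ and $B_t=O(h(t))\colon H^s\to H^s$ coming from~\eqref{e-gue141227II}, and the kernel bounds come from the Melin--Sj\"ostrand composition of the Boutet de Monvel--Sj\"ostrand representation of $S$ with $B_t$, followed by integration by parts in $s$ and the comparison $\abs{\varphi(x,y)}\approx\vartheta(x,y)^2$. The one place your bookkeeping differs: the paper does not assert that the composed amplitude has all of its $S^1_{1,0}$ seminorms bounded by $h(t)$; it records instead the parameter-dependent bounds~\eqref{e-gue150105II}, namely $\abs{\pr^\gamma_s b(x,y,s,t)}\lesssim(\sqrt{s^2+t})^{-\gamma}$ for $\gamma\geq1$ and $\abs{b}\lesssim s/\sqrt{s^2+t}$, and extracts the factor $\frac{1}{\sqrt{1+t}}$ only after the $s$-integration via $\int_0^\infty\frac{ds}{s^2+t}=\frac{\pi}{2\sqrt{t}}$. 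Your version (uniform $h(t)$ seminorms) can be made to work, but then two integrations by parts do not suffice---$\pr_s^2$ of an order-one symbol is only $O(s^{-1})$, which is not integrable at infinity---so you must either exploit the classical homogeneous expansion of the amplitude or split the $s$-integral at $s\sim\abs{\varphi}^{-1}$ and integrate by parts at least three times on the tail; be explicit about this. Your adjoint reduction for $P_tS$, $Q_tS$, $R_tS$ is a clean substitute for the paper's ``arguing similarly,'' since the definition of a smoothing operator of order $j$ with size $f(t)$ is manifestly stable under adjoints.
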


\begin{proof}
Let $\phi$ be a normalized bump function in the ball $B(x,r)$. From  \eqref{e-gue150626}, we have 
\begin{equation}\label{e-gemimapaVbos}
\norm{SQ_t\phi}_{L^\infty(B(x,r))}\leq Cr\sum^3_{j=0}\norm{\nabla^j_b(\chi_rSQ_t\phi)},
\end{equation}
where $\chi_r$ is as in Lemma~\ref{l-gue150626} and $C>0$ is a constant independent of $r$, $\phi$, $x$ and $t$. We claim that 
\begin{equation}\label{e-gue150104}
\norm{\nabla^{j}_bSQ_t\phi}\leq c_j\frac{1}{\sqrt{1+t}}r^{2-j} \text{for $j=0,1,2,3,4$},
\end{equation}
where $c_j>0$ is a constant independent of $r$, $x$ and $t$. Fix $j\in\set{0,1,2,3,4}$. It is known that (see~\cite{Hsiao2010,SteinYung13})
\begin{equation}\label{e-gue150104II}
\nabla^{2j}_bS\colon H^s(M)\To H^{s-j}(M) \quad\text{for all $s\in\bZ$} .
\end{equation}
Moreover, from \eqref{e-gue141227II}, we can check that 
\begin{equation}\label{e-gue150105}
Q_t=O(\frac{1}{\sqrt{1+t}})\colon H^s(M)\To H^s(M) \quad\text{for all $s\in\bZ$}.
\end{equation}
From \eqref{e-gue150104II} and \eqref{e-gue150105}, we deduce that
\begin{equation}\label{e-gue150529}
\nabla^{2j}_bSQ_t=O(\frac{1}{\sqrt{1+t}})\colon H^s(M)\To H^{s-j}(M) \quad\text{for all $s\in\bZ$} .
\end{equation}
From \eqref{e-gue150529}, we have 
\begin{equation}\label{e-gue150528I}
\begin{split}
\norm{\nabla^{j}_bSQ_t\phi}^2&=(\,\nabla^{j}_bSQ_t\phi\,|\,\nabla^{j}_bSQ_t\phi\,)=(\,\nabla^{2j}_bSQ_t\phi\,|\,SQ_t\phi\,)\\
&\lesssim\norm{\nabla^{2j}_bSQ_t\phi}\norm{SQ_t\phi}\\
&\lesssim\frac{1}{1+t}\norm{\phi}_j\norm{\phi}\\
&\lesssim\frac{1}{1+t}\norm{\nabla^{2j}_b\phi}\norm{\phi}\\
&\lesssim\frac{1}{1+t}r^{4-2j},
\end{split}
\end{equation}
where $\norm{\phi}_{j}$ denotes the standard Sobolev norm of $\phi$ of order $j$. 
From \eqref{e-gue150528I}, the claim \eqref{e-gue150104} follows. 

From~\eqref{e-gemimapaVbos} and~\eqref{e-gue150104} we can check that 
\[\begin{split}
\norm{SQ_t\phi}_{L^\infty(B(x,r))}&\lesssim r\sum^3_{j=0}\norm{\nabla^{j}_b(\chi_rSQ_t\phi)}\\
&\lesssim r\norm{\chi_rSQ_t\phi}+r\sum^3_{j=1}\sum^j_{s=0}r^{-j+s}\norm{\nabla^{s}_b(SQ_t\phi)}\\
&\lesssim r\frac{1}{\sqrt{1+t}}+r\sum^3_{j=1}\sum^j_{s=0}r^{-j+s}\frac{1}{\sqrt{1+t}}r^{2-s}\\
&\lesssim\frac{1}{\sqrt{1+t}}.\end{split}\]
We can repeat the method above with minor changes and get that for every $j\in\mathbb N_0$, $\norm{\nabla^j_bSQ_t\phi}\lesssim_{j}\frac{1}{\sqrt{1+t}}r^{-j}$. Thus, $SQ_t$ satisfies the cancellation condition of order $0$ with size $\frac{1}{\sqrt{1+t}}$. Similarly, we can repeat the procedure above with minor change and obtain that $(SQ_t)^*$ satisfies the cancellation condition of order $0$ with size $\frac{1}{\sqrt{1+t}}$.

Now, we estimate the kernel $SQ_t(x,y)$. Let $x=(x_1,x_2,x_3)$ be local coordinates for $M$ defined in an open set $D\subset M$. From Theorem~\ref{t-gue140812I} and the complex stationary phase formula of Melin--Sj\"ostrand~\cite{MS74}, it follows that 
\begin{equation}\label{e-gue150105I}
(SQ_t)(x,y)=\int^\infty_0e^{i\varphi(x,y)s}b(x,y,s,t)ds+F_t(x,y)\ \ \mbox{on $D\times D$},
\end{equation}
where $b(x,y,s,t)\in C^\infty(D\times D\times\mathbb R_+\times\mathbb R_+)$, and for every $\alpha, \beta\in\mathbb N^3_0$, $\gamma\in\mathbb N_0$, there is a constant $C_{\alpha,\beta,\gamma}>0$, independent of $t$, such that on $D\times D$,
\begin{equation}\label{e-gue150105II}
\begin{cases}
\abs{\pr^\alpha_x\pr^\beta_y\pr^\gamma_sb(x,y,s,t)}\leq  C_{\alpha,\beta,\gamma}(\sqrt{s^2+t})^{-\gamma},& \mbox{if $\gamma\geq1$}\\
\abs{\pr^\alpha_x\pr^\beta_yb(x,y,s,t)}\leq  C_{\alpha,\beta,\gamma}(\frac{s}{\sqrt{s^2+t}}),& \mbox{if $\gamma=0$},
\end{cases}
\end{equation}
and $F_t$ is a smoothing operator on $D$ depending smoothly on $t$ with the property that for all $m\in\bN_0$, there is a constant $C_m>0$ such that for all $t\in\bR_+$, 
\[\abs{F_t(x,y)}_{C^m(M\times M)}\leq C_m\frac{1}{\sqrt{1+t}}.\]

From~\eqref{e-gue150105II}, the formula
\[\int^\infty_0e^{i\varphi(x,y)s}b(x,y,s,t)ds=\int^\infty_0\frac{1}{(i\varphi(x,y))^2}\frac{\pr^2}{\pr s^2}(e^{i\varphi(x,y)s})b(x,y,s,t)ds,\]
and distribution theory, one can check that 
\begin{multline}
 \label{e-gue150105III}
 \int^\infty_0e^{i\varphi(x,y)s}b(x,y,s,t)ds\\
 =\int^\infty_0\frac{1}{(i\varphi(x,y))^2}e^{i\varphi(x,y)s}\frac{\pr^2}{\pr s^2}b(x,y,s,t)ds+\frac{1}{(i\varphi(x,y))^2}H_t(x,y),
\end{multline}
where $H_t$ is a smoothing operator on $D$ depending smoothly on $t$ with the property that for all $m\in\bN_0$, there is a constant $\Td C_m>0$ such that for all $t\in\bR_+$, 
\[\abs{H_t(x,y)}_{C^m(M\times M)}\leq\Td C_m\frac{1}{\sqrt{1+t}}.\]

Again, from \eqref{e-gue150105II}, we have 
\begin{equation}\label{e-gue150106}
\begin{split}
\abs{\int^\infty_0\frac{1}{(i\varphi(x,y))^2}e^{i\varphi(x,y)s}\frac{\pr^2}{\pr s^2}b(x,y,s,t)ds}&\leq\hat C\frac{1}{\abs{\varphi(x,y)}^2}\int^\infty_0\frac{1}{s^2+t}ds\\
&\leq\hat C_1\frac{1}{\sqrt{1+t}}\frac{1}{\abs{\varphi(x,y)}^2},
\end{split}
\end{equation}
for all $t\geq1$, where $\hat C>0$, $\hat C_1>0$ are constants independent of $t$. It is known that (see~\cite[Theorem~1.4]{Hsiao2010}) 
$\abs{\varphi(x,y)}\approx\vartheta(x,y)^2$. From this observation, \eqref{e-gue150105III} and \eqref{e-gue150106}, we conclude that 
\[\abs{(SQ_t)(x,y)}\leq C\frac{1}{\sqrt{1+t}}\vartheta(x,y)^{-4} \]
for all $x,y\in M$ with $x\not=y$, where $C>0$ is a constant independent of $t$. 

For every $m\in\mathbb N$, we can repeat the procedure above with minor change  and deduce that there is a constant $C_m>0$ independent of $t$ such that 
\[ |(\nabla_b)_x^{\alpha_1} (\nabla_b)_y^{\alpha_2}(SQ_t)(x,y)| \leq C_{m}\frac{1}{\sqrt{1+t}}\vartheta(x,y)^{-4-|\alpha|} \]
for all $\lvert\alpha\rvert=\lvert\alpha_1\rvert+\lvert\alpha_2\rvert\leq m$.  Thus, $SQ_t$ is a smoothing operator of order $0$ with size $\frac{1}{\sqrt{1+t}}$. 

Arguing similarly yields that $SP_t$ and $SR_t$ are smoothing operators of orders $0$ with sizes $\frac{1}{1+t}$ and $1$, respectively.
\end{proof}

We need two results about the smoothing properties of the operators $G_t$ from Theorem~\ref{t-gue140816}.

\begin{lem}\label{l-gue141227}
Let $G_t\in L^{-2}_{{\rm cl\,},2}(M,\bR_+)$ be as in Theorem~\ref{t-gue140816}. Then, $\tau G_t\tau$ is a smoothing operator of order $2$ with size $\frac{1}{\sqrt{1+t}}$. Moreover, $\tau G_t\tau$ is also a smoothing operators of order $0$ with size $\frac{1}{1+t}$.
\end{lem}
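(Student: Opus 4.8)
The plan is to write $\tau=S+\ol S+F$ by Theorem~\ref{t-gue140812}, with $F$ smoothing, and to expand $\tau G_t\tau$ into the nine operators obtained by distributing the composition. Every operator among the five containing a factor $F$ has a smooth kernel whose $C^m(M\times M)$-norm is $\lesssim\frac1{1+t}$ for all $m$: this follows from $G_t=O\bigl(\frac1{1+t}\bigr)\colon H^s\to H^s$ (see~\eqref{e-gue141204aI}) and the fact that $F$, being smoothing, is bounded $H^{-N}\to H^{N'}$ for all $N,N'$. Any operator with such a kernel is a smoothing operator of order $0$ with size $\frac1{1+t}$ and, since $1\lesssim\vartheta(x,y)^{-j}$ on the compact manifold $M$ for $j>0$, also of order $2$ with size $\frac1{\sqrt{1+t}}$. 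The two off-diagonal operators $SG_t\ol S$ and $\ol SG_tS$ are again smoothing with $C^m$-norms $\lesssim\frac1{1+t}$: since $\mathrm{WF}'(S)=\mathrm{diag}(\Sigma^-\times\Sigma^-)$, $\mathrm{WF}'(\ol S)=\mathrm{diag}(\Sigma^+\times\Sigma^+)$, and $G_t$ is pseudolocal, one inserts $t$-independent microlocal cutoffs $\chi^\pm$ to disjoint conic neighbourhoods of $\Sigma^\pm$ and reduces to $\ol S\chi^+G_t\chi^-S$ and $S\chi^-G_t\chi^+\ol S$, in which $\chi^+G_t\chi^-$ and $\chi^-G_t\chi^+$ are smoothing with symbol seminorms inherited from~\eqref{e-gue141227I}, hence have $C^m$-norms $\lesssim\frac1{1+t}$ (alternatively one argues as in Lemma~\ref{l-gue140813I}, tracking the $t$-factor). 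This reduces everything to the two diagonal operators $SG_tS$ and $\ol SG_t\ol S$.

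I treat $SG_tS$; the operator $\ol SG_t\ol S$ is identical after complex conjugation, using the oscillatory form of $\ol S$ (the conjugate of Theorem~\ref{t-gue140812I}), the fact that the symbol bounds~\eqref{e-gue141227},~\eqref{e-gue141227I} survive conjugating $G_t$, and $\abs{\ol\varphi}=\abs{\varphi}$. Write $S(x,y)\equiv\int_0^\infty e^{i\varphi(x,y)s}a(x,y,s)\,ds$ with $a\in S^1_{\mathrm{cl}}$ as in Theorem~\ref{t-gue140812I}, represent $G_t$ by its symbol $g(z,\xi,t)$, and compute $SG_tS$ by the complex stationary phase formula of Melin--Sj\"ostrand~\cite{MS74}, exactly as in the derivation of~\eqref{e-gue150105I}. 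On the critical manifold of the resulting phase one has $\xi\approx -s\,d_x\varphi$, so the frequency lies near $\Sigma^-$ with $\abs{\xi}\approx s$; since $E_2=(2E_1)^2+\hat E_1$ (see~\eqref{e-gue141204a}) is elliptic of order $2$, its principal symbol obeys $e_2(z,\xi)\approx\abs{\xi}^2\approx s^2$ there, so by~\eqref{e-gue140816I} the leading contribution to the composed amplitude is $a\cdot\frac1{e_2+t}$, of size $\approx\frac{s}{s^2+t}$ because $a$ is classical of order $1$. Tracking the stationary phase expansion yields
\[ SG_tS(x,y)\equiv\int_0^\infty e^{i\varphi(x,y)s}\beta(x,y,s,t)\,ds \]
with $\beta\in C^\infty(M\times M\times\bR_+\times\bR_+)$ satisfying, for all $\alpha,\alpha'\in\bN_0^3$ and $\gamma\in\bN_0$,
\[ \abs{\pr_x^\alpha\pr_y^{\alpha'}\pr_s^\gamma\beta(x,y,s,t)}\lesssim (s^2+t)^{-(\gamma+1)/2}\ \ (s\geq1),\qquad \abs{\pr_x^\alpha\pr_y^{\alpha'}\pr_s^\gamma\beta(x,y,s,t)}\lesssim \frac{1}{1+t}\ \ (0\leq s\leq1); \]
this is the analogue of~\eqref{e-gue150105II}, one order better in $s$, reflecting that $G_t$ is of order $-2$ while $Q_t$ is of order $-1$.

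Granting this, the kernel bounds follow by integrating by parts in $s$ via $\pr_s e^{i\varphi s}=i\varphi e^{i\varphi s}$, using $\abs{\varphi(x,y)}\approx\vartheta(x,y)^2$ from~\cite[Theorem~1.4]{Hsiao2010} and that $\beta$ decays as $s\to\infty$. One integration by parts gives $\abs{SG_tS(x,y)}\lesssim\abs{\varphi}^{-1}\int_0^\infty\abs{\pr_s\beta}\,ds\lesssim\frac1{\sqrt{1+t}}\vartheta(x,y)^{-2}$, since $\int_1^\infty(s^2+t)^{-1}\,ds\lesssim\frac1{\sqrt{1+t}}$; two integrations by parts give $\abs{SG_tS(x,y)}\lesssim\abs{\varphi}^{-2}\int_0^\infty\abs{\pr_s^2\beta}\,ds\lesssim\frac1{1+t}\vartheta(x,y)^{-4}$, since $\int_1^\infty(s^2+t)^{-3/2}\,ds\lesssim\frac1{1+t}$; the boundary terms at the cutoff and the contribution of $0\leq s\leq1$ are of lower order in $\abs{\varphi}^{-1}$ and harmless. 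Differentiating the oscillatory representation in $x$ and $y$ only adds powers of $\varphi^{-1}$ and symbols obeying the same estimates, whence $\abs{(\nabla_b)_x^{\alpha_1}(\nabla_b)_y^{\alpha_2}SG_tS(x,y)}\lesssim\frac1{\sqrt{1+t}}\vartheta(x,y)^{-2-\abs{\alpha}}$ and also $\lesssim\frac1{1+t}\vartheta(x,y)^{-4-\abs{\alpha}}$.

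For the cancellation conditions I mimic the proof of Theorem~\ref{t-gue141227}. Given a normalized bump $\phi$ on $B(x,r)$, Lemma~\ref{l-gue150626} bounds $\norm{\nabla_b^\alpha(SG_tS)\phi}_{L^\infty(B(x,r))}$ by $r\sum_{j=0}^3\norm{\nabla_b^j(\chi_r\nabla_b^\alpha(SG_tS)\phi)}$; expanding and using $\nabla_b^kS\colon H^\sigma\to H^{\sigma-k/2}$ (the mapping property~\eqref{e-gue150104II} and interpolation) together with $G_t=O\bigl(\frac1{\sqrt{1+t}}\bigr)\colon H^\sigma\to H^{\sigma+1}$ from~\eqref{e-gue141204aII} (respectively $G_t=O\bigl(\frac1{1+t}\bigr)\colon H^\sigma\to H^\sigma$ from~\eqref{e-gue141204aI}) and the bump estimate $\norm{\phi}_{H^k}\lesssim r^{2-2k}$, one gets $\norm{\nabla_b^m(SG_tS)\phi}\lesssim\frac1{\sqrt{1+t}}r^{4-m}$ (respectively $\lesssim\frac1{1+t}r^{2-m}$), and hence $\norm{\nabla_b^\alpha(SG_tS)\phi}_{L^\infty(B(x,r))}\lesssim\frac1{\sqrt{1+t}}r^{2-\abs{\alpha}}$ (respectively $\lesssim\frac1{1+t}r^{-\abs{\alpha}}$). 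Since $(SG_tS)^*=SG_t^*S$ with $G_t^*$ again a pseudodifferential operator satisfying the same symbol estimates, the adjoint is handled identically. Assembling the nine pieces and composing with the outer factors $S,\ol S$ (each a smoothing operator of order $0$ with size $1$, by Theorem~\ref{thm2.1}) via Theorem~\ref{thm2.2} shows that $\tau G_t\tau$ is a smoothing operator of order $2$ with size $\frac1{\sqrt{1+t}}$ and of order $0$ with size $\frac1{1+t}$. The main obstacle is the stationary phase computation producing the oscillatory representation of $SG_tS$ with the stated symbol estimates — in particular, verifying that the $t$-dependence of $\beta$ is governed by $\frac{s}{s^2+t}$, coming from the principal symbol $\frac1{e_2+t}$ of $G_t$ restricted to the characteristic cone $\abs{\xi}\approx s$ where $e_2\approx s^2$.
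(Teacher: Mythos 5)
Your proposal is correct in outline but proves the key estimates by a genuinely different route from the paper. You attack $SG_tS$ head-on: compose the Boutet de Monvel--Sj\"ostrand representation of $S$ with the symbol of $G_t$ by Melin--Sj\"ostrand stationary phase, read off the amplitude bound $\approx\frac{s}{s^2+t}$ from the principal symbol $\frac{1}{e_2+t}$ restricted to $\xi\approx-s\,d_x\varphi\in\Sigma^-$ (where $e_2\approx s^2$), and then integrate by parts in $s$ once or twice against $\abs{\varphi}\approx\vartheta^2$ to get the two kernel bounds. The paper never redoes the stationary phase for an order $-2$ operator sandwiched between two copies of $S$; instead it writes $SG_tS\equiv S\ol N\,\ol\Box_bG_tS$ (using $\ol N\ol\Box_b=I-\ol S$ and $S\ol S\equiv0$), converts $\ol\Box_bG_t$ into $[\Box_b,G_t]+EG_t\in L^{-1}_{{\rm cl\,},2}$ via $\Box_bG_tS=[\Box_b,G_t]S$ and Lemma~\ref{l-gue140813}, invokes Theorem~\ref{t-gue141227} to see these are order $0$ with size $\frac{1}{\sqrt{1+t}}$, and finally gets the order $2$ from the factor $\ol N$ (a smoothing operator of order $2$) through the composition Theorem~\ref{thm2.2}; the order-$0$/size-$\frac{1}{1+t}$ statement is just Theorem~\ref{thm2.2} applied to $S\circ(G_tS)$. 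So in the paper the ``$2$'' comes from $\ol N$ and the $t$-decay from the order $-1$ commutators, while in your argument both come directly from the symbol of $G_t$; your route buys directness at the cost of redoing the oscillatory-integral machinery of Theorem~\ref{t-gue141227} for a new class of operators, whereas the paper's is modular. Your cancellation-condition argument via Lemma~\ref{l-gue150626} and the duality trick $\lVert\nabla_b^mu\rVert^2=(\nabla_b^{2m}u\,|\,u)$ matches the paper's template and the exponents check out. The one soft spot is your claim that differentiating the oscillatory representation ``only adds powers of $\varphi^{-1}$'': a $\nabla_b$ derivative of $e^{i\varphi s}$ produces an extra factor $is\,Z\varphi$, and the factor of $s$ does not obey the same amplitude estimates; you must use $\abs{Z\varphi}\lesssim\vartheta$ for horizontal $Z$ and split the $s$-integral near $s\sim\vartheta^{-2}$ (or balance extra integrations by parts) to recover the stated $\vartheta^{-2-\abs{\alpha}}$ decay. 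This is the same elision the paper makes in Theorem~\ref{t-gue141227} (``repeat the procedure above with minor change''), so it is a point to expand rather than a wrong step.
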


\begin{proof}
From Theorem~\ref{t-gue140812}, Lemma~\ref{l-gue140813I} and \eqref{e-gue141204aI}, it is straightforward to see that 
\begin{equation}\label{e-gue141227a}
\begin{split}
\tau G_t\tau&=SG_tS+\ol SG_t\ol S+F_t\\
&=S\ol N\,\ol\Box_bG_tS+\ol SN\Box_bG_t\ol S+H_t,
\end{split}
\end{equation}
where $F_t$ and $H_t$ are  smoothing operators on $M$ depending smoothly on $t$ with the property that for all $m\in\bN_0$, there is a constant $C_m>0$ such that for all $t\in\bR_+$, 
\begin{equation}\label{e-gue141227aI}
\begin{split}
&\abs{F_t(x,y)}_{C^m(M\times M)}\leq C_m\frac{1}{1+t},\\
&\abs{H_t(x,y)}_{C^m(M\times M)}\leq C_m\frac{1}{1+t}.
\end{split}
\end{equation}
Note that $\Box_bG_t,\ol\Box_bG_t\in L^{0}_{{\rm cl\,},2}(M,\bR_+)$. From this observation, Theorem~\ref{thm2.2}, Theorem~\ref{t-gue141227} and \eqref{e-gue141227a}, we conclude that $\tau G_t\tau$ is a smoothing operator of order $0$ with size $\frac{1}{1+t}$. 

From Lemma~\ref{l-gue140813}, we have 
\begin{equation}\label{e-gue141227aII}
\begin{split}
S\ol N\,\ol\Box_bG_tS&=S\ol N\Box_bG_tS+S\ol NEG_tS\\
&=S\ol N[\Box_b,G_t]S+S\ol NEG_tS,
\end{split}
\end{equation}
where $E$ is a first order partial differential operator. Note that $[\Box_b,G_t], EG_t\in L^{-1}_{{\rm cl\,},2}(M,\bR_+)$. From this observation, Theorem~\ref{thm2.2} and Theorem~\ref{t-gue141227}, we conclude that $S\ol N[\Box_b,G_t]S+S\ol NEG_tS$ is a smoothing operator of order $2$ with size $\frac{1}{\sqrt{1+t}}$. 
Similarly, $\ol SN\Box_bG_t\ol S$ is a smoothing operator of order $2$ with  size $\frac{1}{\sqrt{1+t}}$. From \eqref{e-gue141227a}, we conclude that $\tau G_t\tau$ is a smoothing operator of order $2$
with size $\frac{1}{\sqrt{1+t}}$. The lemma follows.
\end{proof}

\begin{lem}\label{l-gue141227I}
Let $E_2\in L^2_{{\rm cl\,}}(M)$ be as in~\eqref{e-gue141204a}.  Then $\tau E_2(I-\tau)G_t\tau$ is a smoothing operator of order $1$ with size $1$.
\end{lem}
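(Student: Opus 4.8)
\emph{Plan.} The idea is to peel off, one at a time, every part of $\tau E_2(I-\tau)G_t\tau$ that is manifestly controlled, using the decomposition $\tau=S+\ol S+F$ of Theorem~\ref{t-gue140812} and the $\ddbar_b$--calculus, until a single essential term is left; that term is then analysed by means of the construction of $E_2=(2E_1)^2+\hat E_1$ behind Theorem~\ref{t-gue140814I} together with the composition rule of Theorem~\ref{thm2.2}. We use repeatedly that a classical smoothing operator whose $C^m(M\times M)$--norms are $\lesssim\frac{1}{1+t}$ uniformly in $t$ is, in particular, a smoothing operator of order $0$ (indeed of any order $<4$) with size $\frac{1}{1+t}$, since $\vartheta(x,y)^{-4+j-|\alpha|}\gtrsim 1$ on the compact manifold $M$ and $\norm{\phi}_{L^1}\lesssim r^{4}$ for a normalized bump function $\phi$ on $B(x,r)$.

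\emph{Reduction of the outer factors.} Write $\tau=S+\ol S+F$ with $F$ classical smoothing. By Theorem~\ref{t-gue140816}, $G_t=O(\frac{1}{1+t})\colon H^s\to H^s$, so every term of $\tau E_2(I-\tau)G_t\tau$ in which $F$ occurs as an outer factor is classical smoothing of size $\frac{1}{1+t}$ and may be discarded. Expanding the inner factor $I-\tau=I-S-\ol S-F$ and noting (by the complex stationary phase calculus, applying the order--$2$ operator $E_2$ to the kernels of Theorems~\ref{t-gue140812},~\ref{t-gue140812I}) that $E_2S$ has a kernel of the form $\int_0^\infty e^{i\varphi s}\alpha(x,y,s)\,ds$ and $E_2\ol S$ one of the form $\int_0^\infty e^{-i\ol\varphi s}\beta(x,y,s)\,ds$ with $\alpha,\beta\in S^3_{{\rm cl\,}}$, Lemma~\ref{l-gue140813I} gives $SE_2\ol S\equiv 0$ and $\ol SE_2S\equiv 0$; hence modulo classical smoothing operators
\[ (S+\ol S)E_2(I-\tau)\equiv SE_2(I-S)+\ol SE_2(I-\ol S). \]
Composing with $G_t\tau$ and again discarding the classical smoothing terms of size $\frac{1}{1+t}$, we are reduced — by conjugation — to showing that $SE_2(I-S)G_t\tau$ is a smoothing operator of order $1$ with size $1$.

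\emph{Inserting the partial inverse.} Using $I-S=N\Box_b$ (cf.~\eqref{e-gue141204b}) we write $SE_2(I-S)G_t\tau=(SE_2N)(\Box_bG_t)\tau$. Since $G_t\in L^{-2}_{{\rm cl\,},2}(M,\bR_+)$ has full symbol bounded by $\lesssim\frac{1}{1+t}(1+|\xi|)^{-2-|\beta|}$ (cf.~\eqref{e-gue141227II}), the operator $\Box_bG_t$ lies in $L^0_{{\rm cl\,},2}(M,\bR_+)$ with full symbol bounded by $\lesssim\frac{1}{1+t}(1+|\xi|)^{-|\beta|}$. Expanding $\tau=S+\ol S+F$ once more, the evident variant of Theorem~\ref{t-gue141227} for operators in $L^0_{{\rm cl\,},2}$ with a symbol of this ($P_t$--type) size shows that $(\Box_bG_t)S$ and $(\Box_bG_t)\ol S$ are smoothing operators of order $0$ with size $\frac{1}{1+t}$, while $(\Box_bG_t)F$ is classical smoothing of size $\frac{1}{1+t}$, hence also smoothing of order $0$ with size $\frac{1}{1+t}$. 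By Theorem~\ref{thm2.2}, the whole statement will follow once we prove that the fixed operator $SE_2N$ is a smoothing operator of order $1$: its composition with an operator of order $0$ and size $\frac{1}{1+t}$ then has order $1$ and size $\frac{1}{1+t}\leq 1$.

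\emph{The off--diagonal block of $E_2$, and the main obstacle.} It remains to show that $SE_2N$ is a smoothing operator of order $1$, and this is the heart of the proof. Naively this fails, because the off--diagonal block $SE_2(I-S)$ of $E_2$ has negative order; the gain must be extracted from three facts. First, $N=(I-S)N$ is a smoothing operator of order $2$ (Theorem~\ref{thm2.1}), $N$ maps into ${\rm ran\,}\ol{\pr}^*_b$ (closed, since $M$ is embeddable), and $S\,\ol{\pr}^*_b=0$ because $S$ projects onto $\ker\Box_b=({\rm ran\,}\ol{\pr}^*_b)^{\perp}$; writing $N=\ol{\pr}^*_b R$ and using $S\,\ol{\pr}^*_b=0$ one gets $SE_2N=S[E_2,\ol{\pr}^*_b]R$, which already lowers the naive order. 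Second, by \eqref{e-gue140813aIII} and the construction underlying Theorems~\ref{t-gue140813},~\ref{t-gue140814I}, $SE_1\equiv S(2iT+g_0)$ for some $g_0\in C^\infty(M)$, so $SE_2\equiv S\widetilde{E}_2$ modulo classical smoothing for a classical pseudodifferential operator $\widetilde{E}_2\in L^2_{{\rm cl\,}}(M)$ that, microlocally near $\Sigma^-$, equals $4(2iT)^2$ up to lower order. Third, the commutator formulae of Lee (Lemma~\ref{l-gue140813}) express the failure of $T$ and the horizontal fields to preserve $\ker\Box_b$ as torsion terms of order zero (so that, near $\Sigma^-$, $[E_2,\ol{\pr}^*_b]$ has a leading part of the shape $A\cdot\ddbar_b$--coefficient composed with one $T$), and this, combined with the Kohn--type identity $\ddbar_bN\equiv N^{(1)}\ddbar_b$ (with $N^{(1)}$ the Green operator of the Kohn Laplacian on $(0,1)$--forms, hypoelliptic in dimension three) and the composition calculus of Theorem~\ref{thm2.2} and Lemma~\ref{l-gue150626}, yields that $SE_2N$ has order $1$ and size $1$. \textbf{The genuine difficulty} lies precisely in this bookkeeping: the intermediate operators (such as $S[E_2,\ol{\pr}^*_b]$) lie outside the clean range of orders $[0,4)$, and one must track carefully how the cancellations from $S\,\ol{\pr}^*_b=0$, from Lee's torsion terms, and from the regularity gained through $N$, $R$ and $\Box_bG_t$ conspire to produce the asserted order--$1$ estimate uniformly in $t$.
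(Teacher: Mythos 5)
Your outer reduction (expanding $\tau=S+\ol S+F$ and killing the cross terms via Lemma~\ref{l-gue140813I}) matches the paper's first step~\eqref{e-gue141227b}, but the argument then breaks down exactly at the step you yourself flag as ``the genuine difficulty'': the claim that $SE_2N$ is a smoothing operator of order $1$ is neither proved nor, as far as the order arithmetic goes, correct. Writing $N=\ol{\pr}^*_bR$ with $R=\ddbar_bN^2$ and using $S\ol{\pr}^*_b=0$ gives $SE_2N=S[E_2,\ol{\pr}^*_b]\ddbar_bN^2$; here $\ddbar_bN^2$ gains $4-1=3$ non-isotropic orders, while $[E_2,\ol{\pr}^*_b]\in L^2_{{\rm cl\,}}(M)$ costs $4$ generically and still $3$ microlocally near $\Sigma^-$ (where $E_2\equiv-16T^2$ plus lower-order terms, and $[T^2,\ol{\pr}^*_b]=T[T,\ol{\pr}^*_b]+[T,\ol{\pr}^*_b]T$ with $[T,\ol{\pr}^*_b]$ a horizontal first-order operator by Lee's formulas). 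The net order of $SE_2N$ is therefore at most $0$, not $1$, and composing an order-$0$ block with the order-$0$ block $\Box_bG_t\tau$ only yields order $0$ --- insufficient for the lemma and for its use in Proposition~\ref{p-gue141228}, where the order-$1$ statement is what makes $A_{K,t}$ a smoothing operator of order $3$.

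The missing idea is the paper's redistribution of derivatives between the $t$-independent left block and the $t$-dependent right block. The paper keeps $G_t$ attached: it writes $SE_2(I-S)G_tS=SE_2\Box_bNG_tS=S[E_2,\ol{\pr}^*_b]\ddbar_bNG_tS$, inserts $\ol N^2\ol\Box_b^2$ (legitimate modulo an $\ol S$-term killed against the leading $S$ by Lemma~\ref{l-gue140813I}), and pushes $\ol\Box_b^2$ to the right through $G_t$ using Lemma~\ref{l-gue140813} and $\Box_bS=0$. The two extra factors of $\ol N$ raise the left block $S[E_2,\ol{\pr}^*_b]\ddbar_bN\ol N^2$ to order $1$, while the price $\ol\Box_b^2$ is paid entirely by commutators $[\Box_b,G_t]$, $[\Box_b,[\Box_b,G_t]]$ and terms $Z_iG_t$, all of which stay in $L^0_{{\rm cl\,},2}(M,\bR_+)$ and hence cost nothing in order after composing with $S$ (Theorem~\ref{t-gue141227}). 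Your factorization $(SE_2N)(\Box_bG_t\tau)$ forecloses this trade. A separate, smaller error: the bound $\abs{\pr^\alpha_x\pr^\beta_\xi g(x,\xi,t)}\lesssim\frac{1}{1+t}(1+\abs{\xi})^{-2-\abs{\beta}}$ you attribute to~\eqref{e-gue141227II} is false --- one has either full $t$-decay with no $\xi$-gain~\eqref{e-gue141227I}, or a one-order $\xi$-gain with only $\frac{1}{\sqrt{1+t}}$~\eqref{e-gue141227}, never both --- so $\Box_bG_tS=[\Box_b,G_t]S$ has size $\frac{1}{\sqrt{1+t}}$ at best, not $\frac{1}{1+t}$; this does not affect the final size claim of the lemma, but it compounds the order deficit above.
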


\begin{proof}
From Theorem~\ref{t-gue140812}, Lemma~\ref{l-gue140813I} and \eqref{e-gue141204aI}, we check that 
\begin{equation}\label{e-gue141227b}
\begin{split}
\tau E_2(I-\tau)G_t\tau&=SE_2(I-S)G_tS+\ol SE_2(I-\ol S)G_t\ol S+F_t\\
&=SE_2\Box_bNG_tS+\ol SE_2\ol\Box_b\,\ol NG_t\ol S+F_t,
\end{split}
\end{equation}
where $F_t$ is a smoothing operators on $M$ depending smoothly on $t$ with the property that for all $m\in\bN_0$, there is a constant $C_m>0$ such that for all $t\in\bR_+$, 
\[\abs{F_t(x,y)}_{C^m(M\times M)}\leq C_m\frac{1}{1+t}.\]
Again, from Theorem~\ref{t-gue140812}, Lemma~\ref{l-gue140813I} and \eqref{e-gue141204aI}, we check that 
\begin{equation}\label{e-gue141227bI-I}
\begin{split}
SE_2\Box_bNG_tS&=S[E_2,\ol{\pr}^*_b]\ddbar_bNG_tS\\
&=S[E_2,\ol{\pr}^*_b]\ddbar_bN\ol N^2\,\ol\Box_b^2G_tS+H_t,
\end{split}
\end{equation}
where $H_t$ is a smoothing operator on $M$ depending smoothly on $t$ with the property that for all $m\in\bN_0$, there is a constant $C_m>0$ such that for all $t\in\bR_+$, 
\[\abs{H_t(x,y)}_{C^m(M\times M)}\leq C_m\frac{1}{1+t}.\]
From Lemma~\ref{l-gue140813}, we have 
\begin{equation}\label{e-gue141227bI}
\begin{split}
&S[E_2,\ol{\pr}^*_b]\ddbar_bN\ol N^2\,\ol\Box_b^2G_tS\\
&=S[E_2,\ol{\pr}^*_b]\ddbar_bN\ol N^2\ol\Box_b[\Box_b,G_t]S
+S[E_2,\ol{\pr}^*_b]\ddbar_bN\ol N^2\ol\Box_bZ_0S\\
&=S[E_2,\ol{\pr}^*_b]\ddbar_bN\ol N^2[\Box_b,[\Box_b,G_t]]S+S[E_2,\ol{\pr}^*_b]\ddbar_bN\ol N^2Z_1[\Box_b,G_t]S\\
&\quad+S[E_2,\ol{\pr}^*_b]\ddbar_bN\ol N^2[\Box_b,Z_0]G_tS+S[E_2,\ol{\pr}^*_b]\ddbar_bN\ol N^2Z_2Z_0G_tS,
\end{split}
\end{equation}
where $Z_0, Z_1, Z_2$ are first order partial differential operators. Note that 
\[[\Box_b,[\Box_b,G_t]], Z_1[\Box_b,G_t], [\Box_b,Z_0]G_t, Z_2Z_0G_t\in L^{0}_{{\rm cl\,},2}(M,\bR_+).\]
From this observation and Theorem~\ref{t-gue141227}, we deduce that $[\Box_b,[\Box_b,G_t]]S$, $Z_1[\Box_b,G_t]S$, $[\Box_b,Z_0]G_tS$ and $Z_2Z_0G_tS$ are smoothing operators of order $0$ with sizes $1$. Moreover, from the symbolic calculus of Stein--Yung~\cite{SteinYung13}, we check that $S[E_2,\ol{\pr}^*_b]\ddbar_bN\ol N^2$ is a smoothing operator of order $1$. 

From the discussion above and \eqref{e-gue141227bI}, we conclude that $S[E_2,\ol{\pr}^*_b]\ddbar_bN\ol N^2\,\ol\Box_b^2G_tS$ is a smoothing operator of order $1$ with size $1$. Similarly, we can repeat the procedure above and conclude that $\ol SE_2(I-\ol S)G_t\ol S$ is a smoothing operator of order $1$ with size $1$. The lemma now follows from~\eqref{e-gue141227b}. 
\end{proof}

Our first goal is to invert $\oP_4^\prime+t+\pi$.  We begin by constructing a parametrix.

\begin{prop}\label{p-gue141228}
For every $N>0$, there are continuous operators 
\begin{align*}
&A_{N,t}=O(\frac{1}{1+t})\colon H^s(M)\To H^{s+\frac{1}{2}}(M) \quad\text{for all $s\in\bZ$}, \\
&R_{N,t}=O(\frac{1}{1+t})\colon H^s(M)\To H^{s+N}(M) \quad\text{for all $s\in\bZ$}
\end{align*}
depending continuously on $t$ such that
\begin{enumerate}
 \item $A_{K,t}$ is a smoothing operator of order $3$ with size $\frac{1}{\sqrt{1+t}}$;
 \item $A_{K,t}$ is a smoothing operator of order $1$ with size $\frac{1}{1+t}$;
 \item $(\oP_4^\prime+t+\pi)(\tau G_t\tau+\tau A_{K,t}\tau)=\tau+\tau R_{K,t}\tau$ on $\mP$.
\end{enumerate}
\end{prop}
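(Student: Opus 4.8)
The plan is to construct a right parametrix for $\oP_4^\prime+t+\pi$ on $\mP$ by successive approximation, with the operator $\tau G_t\tau$ from Theorem~\ref{t-gue140816} as the leading term and $\tau A_{N,t}\tau$ built as a correction out of $\tau G_t\tau$ itself.

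First I would compute $(\oP_4^\prime+t+\pi)(\tau G_t\tau)$ on $\mP$. Using Theorem~\ref{t-gue140814I} to write $\oP_4^\prime=\tau E_2$ on $\mP$, with $E_2=(2E_1)^2+\hat E_1$ as in~\eqref{e-gue141204a}, together with $\tau^2=\tau$, the splitting $\tau E_2\tau G_t\tau=\tau E_2G_t\tau-\tau E_2(I-\tau)G_t\tau$, and the defining relation $(E_2+t)G_t=I+F_t$ of~\eqref{e-gue140816II}, one obtains
\[
(\oP_4^\prime+t+\pi)(\tau G_t\tau)=\tau+\tau F_t\tau+\pi\,\tau G_t\tau-\tau E_2(I-\tau)G_t\tau\qquad \text{on } \mP.
\]
By~\eqref{e-gue140816III} the term $\tau F_t\tau$ is a classical smoothing operator all of whose $C^m$ kernel norms are $O(\frac1{1+t})$, and, $\pi$ being of finite rank, $\pi\,\tau G_t\tau$ is likewise classically smoothing with kernel norms $O(\frac1{1+t})$ by~\eqref{e-gue141204aI}; both are already as good as $\tau R_{N,t}\tau$ is required to be, so I would simply carry them along in the remainder. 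The term that must be corrected is $\mathcal E_t:=-\tau E_2(I-\tau)G_t\tau$, which by Lemma~\ref{l-gue141227I} is a smoothing operator of order $1$ with size $1$: smoothing enough to be iterated on in the non-isotropic (Hsiao--Yung) scale, but without decay in $t$.

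To remove $\mathcal E_t$ I would run a finite Neumann-type correction, taking $\tau A_{N,t}\tau=\sum_{k\ge1}(-1)^k\,\tau G_t\tau\,\mathcal E_t^{\,k}$. Each $\mathcal E_t^{\,k}$ is, by Theorem~\ref{thm2.2}, a smoothing operator of order $k$ with size $1$, so the sum is genuinely finite: the orders of the composed operators must stay strictly below $4$, which both bounds the number of correction steps and is the reason the full decay and the $N$ derivatives demanded of $R_{N,t}$ cannot come from this iteration alone. The stated properties of $\tau A_{N,t}\tau$ then follow from the two descriptions of $\tau G_t\tau$ in Lemma~\ref{l-gue141227}, namely that it is simultaneously a smoothing operator of order $2$ with size $\frac1{\sqrt{1+t}}$ and of order $0$ with size $\frac1{1+t}$: composing with the former produces order $3$ with size $\frac1{\sqrt{1+t}}$ (property~(1)), and composing with the latter produces order $1$ with size $\frac1{1+t}$ (property~(2)), whose isotropic consequence is the mapping bound $A_{N,t}=O(\frac1{1+t})\colon H^s\to H^{s+1/2}$. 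Re-using the first identity for each summand and telescoping, $(\oP_4^\prime+t+\pi)(\tau G_t\tau+\tau A_{N,t}\tau)$ collapses to $\tau$ plus the already-good terms, plus cross terms of the form $(\text{classically smoothing, size }\tfrac1{1+t})\cdot\mathcal E_t^{\,k}$ (again classically smoothing of size $\frac1{1+t}$), plus one residual built from the top power of $\mathcal E_t$; running the same construction with the iterated elliptic parametrix $G_t\sum_{j\le J}(-F_t)^j$ in place of $G_t$ lets the good part be taken to gain any prescribed number of derivatives with size $O(\frac1{1+t})$, which defines $R_{N,t}$ and gives~(3).

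The main obstacle is the bookkeeping in the previous paragraph. The composition theorem for $t$-dependent smoothing operators (Theorem~\ref{thm2.2}) applies only when the orders add to something less than $4$, so the non-isotropic iteration on $\mathcal E_t$ stops after boundedly many steps and, on its own, leaves a residual of order $<4$ and size $1$ — not yet of the quality demanded of $R_{N,t}$, which must both decay like $\frac1{1+t}$ and gain $N$ derivatives. The heart of the argument is therefore to interleave this non-isotropic iteration (which controls the diagonal singularity of $\tau A_{N,t}\tau$ and yields~(1)--(2)) with the classical elliptic parametrix iteration $G_t\mapsto G_t\sum_j(-F_t)^j$ (which supplies the $\frac1{1+t}$ decay and the extra smoothing for $R_{N,t}$), and to exploit the precise structure of $\tau E_2(I-\tau)G_t\tau$ recorded in the proof of Lemma~\ref{l-gue141227I} together with the double description of $\tau G_t\tau$ in Lemma~\ref{l-gue141227}, so that every composition that occurs is either inside the order budget of Theorem~\ref{thm2.2} or else of the harmless form (classically smoothing)$\cdot$(uniformly bounded in $t$); the uniform-in-$t$ estimates needed throughout are exactly Theorem~\ref{t-gue141227} and Lemmas~\ref{l-gue141227}, \ref{l-gue141227I}.
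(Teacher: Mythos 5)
Your leading computation, your identification of the error term $-\tau E_2(I-\tau)G_t\tau$, and your finite Neumann correction built from $\tau G_t\tau$ all match the paper's proof, and your derivation of properties (1) and (2) from the two descriptions of $\tau G_t\tau$ in Lemma~\ref{l-gue141227} together with Theorem~\ref{thm2.2} is correct. The gap is in how you produce $R_{N,t}$, and it stems from a misreading of what the order budget $j_1+j_2<4$ constrains. That budget caps the non-isotropic smoothing \emph{order} one may claim for a composition (one simply downgrades $(\tau A_t\tau)^k$ to order $1$ before composing again); it does not bound the \emph{number} of Neumann iterates. The paper takes $2K+4$ correction terms, far more than three, and each summand $\tau G_t\tau(\tau A_t\tau)^k$ is still of order $3$ with size $\frac{1}{\sqrt{1+t}}$ and of order $1$ with size $\frac{1}{1+t}$. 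What makes the long iteration worthwhile --- and what your argument is missing --- is the isotropic Sobolev estimate $A_t=O(1)\colon H^s(M)\to H^{s+\frac12}(M)$ for the error operator itself (claim~\eqref{e-gue141228bII}), which the paper extracts from the commutator structure $SE_2\Box_bNG_tS=S[E_2,\ol{\pr}^*_b]\ddbar_bN\ol N^2\ol\Box_b^2G_tS+\cdots$ together with the subelliptic gains $N,\ol N\colon H^s\to H^{s+1}$ and $\ddbar_bN\colon H^s\to H^{s+\frac12}$. With this in hand, the remainder $(\tau A_t\tau)^{2K+5}$ factors as $(\tau A_t\tau)^{2K+4}\circ(\tau A_t\tau)$, where the first factor is $O(1)\colon H^{s}\to H^{s+K+2}$ and the single remaining factor is $O(\frac{1}{1+t})\colon H^s\to H^{s-2}$ by~\eqref{e-gue141204aI}; that composite is precisely $\tau R_{K,t}\tau$.

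Your proposed substitute mechanism --- replacing $G_t$ by the iterated classical parametrix $G_t\sum_{j\le J}(-F_t)^j$ --- cannot supply the missing smoothing or decay. The operator $F_t$ is already a smoothing operator all of whose $C^m(M\times M)$ norms are $O(\frac{1}{1+t})$ by~\eqref{e-gue140816III}, so iterating on it gains nothing; and the genuine obstruction, $\tau E_2(I-\tau)G_t\tau$, arises from the projection $\tau$ rather than from the error in the elliptic parametrix, so it is unchanged to leading order when $G_t$ is refined as a parametrix for $E_2+t$. Without the half-derivative gain of $A_t$ in the isotropic scale, the residual you describe as ``built from the top power of $\mathcal{E}_t$'' remains an operator of size $1$ gaining only a bounded number of derivatives, which does not meet the requirements on $R_{N,t}$.
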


\begin{proof}
From Theorem~\ref{t-gue140814I} and Theorem~\ref{t-gue140816}, we have 
\begin{equation}\label{e-gue141228b}
\begin{split}
(\oP_4^\prime+t+\pi)(\tau G_t\tau)&=\tau(E_2+t)\tau G_t\tau+\pi\tau G_t\tau\\
&=\tau(E_2+t)G_t\tau-\tau E_2(I-\tau)G_t\tau+\pi\tau G_t\tau\\
&=I+\tau A_t\tau\ \ \mbox{on $\mathcal{P}$},
\end{split}
\end{equation}
where 
\begin{equation}\label{e-gue141228bI}
A_t=-\tau E_2(I-\tau)G_t\tau+\tau\Td F_t\tau.
\end{equation}
Here $\Td F_t$ is a smoothing operator on $M$ depending smoothly on $t$ with the property that for all $m\in\mathbb N_0$, there is a constant $C_m>0$ such that for all $t\in\bR_+$, 
\begin{equation}\label{e-gue141228bII-I}
\abs{\Td F_t(x,y)}_{C^m(M\times M)}\leq C_m\frac{1}{1+t}.
\end{equation}
By Lemma~\ref{l-gue141227I}, we have that $A_t$ is a smoothing operator of order $1$ with size $1$. We claim that 
\begin{equation}\label{e-gue141228bII}
A_t=O(1):H^s(M)\To H^{s+\frac{1}{2}}(M) \quad\text{for all $s\in\bZ$}.
\end{equation}
From~\eqref{e-gue141227bI-I} and~\eqref{e-gue141227bI} we see that
\begin{equation}\label{e-gue141228bV}
\begin{split}
&SE_2\Box_bNG_tS\\
&=S[E_2,\ol{\pr}^*_b]\ddbar_bN\ol N^2[\Box_b,[\Box_b,G_t]]S+S[E_2,\ol{\pr}^*_b]\ddbar_bN\ol N^2Z_1[\Box_b,G_t]S\\
&\quad+S[E_2,\ol{\pr}^*_b]\ddbar_bN\ol N^2[\Box_b,Z_0]G_tS+S[E_2,\ol{\pr}^*_b]\ddbar_bN\ol N^2Z_2Z_0G_tS+H_t,
\end{split}
\end{equation}
where $Z_0, Z_1, Z_2$ are first order partial differential operators and $H_t$ is a smoothing operator on $M$ depending smoothly on $t$ with the property that for all $m\in\bN_0$, there is a constant $C_m>0$ such that for all $t\in\bR_+$, 
\[\abs{H_t(x,y)}_{C^m(M\times M)}\leq C_m\frac{1}{1+t}.\]
It is known that (see~\cite{Hsiao2010,Hsiao2014}) 
\begin{align*}
&N,\ol N\colon H^s(M)\To H^{s+1}(M) \quad\text{for all $s\in\bZ$},\\
&\ddbar_bN\colon H^s(M)\To H^{s+\frac{1}{2}}(M,T^{*0,1}M) \quad\text{for all $s\in\bZ$}.
\end{align*}
From this observation, \eqref{e-gue140816} and \eqref{e-gue141228bV}, we deduce that 
\[ SE_2\Box_bNG_tS=O(1)\colon H^s(M)\To H^{s+\frac{1}{2}}(M) \quad\text{for all $s\in\bZ$} . \]
Similarly, we have $\ol SE_2\ol\Box_b\,\ol NG_t\ol S=O(1):H^s(M)\To H^{s+\frac{1}{2}}(M)$ for all $s\in\bZ$.  Inserting this into~\eqref{e-gue141227b} yields the claim~\eqref{e-gue141228bII}.

Now put 
\[A_{K,t}=\tau G_t\tau(I-(\tau A_t\tau)+(\tau A_t\tau)^2-(\tau A_t\tau)^3+\cdots+(\tau A_t\tau)^{2K+4})-\tau G_t\tau.\]
From Theorem~\ref{thm2.2} and Lemma~\ref{l-gue141227} we observe that $A_{K,t}$ is a smoothing operator of order $3$ with size $\frac{1}{\sqrt{1+t}}$, and also $A_{K,t}$ is a smooth operator of order $1$ with size $\frac{1}{1+t}$. Moreover, from~\eqref{e-gue141204aI} and~\eqref{e-gue141228bII} we conclude that 
\[A_{K,t}=O(\frac{1}{1+t})\colon H^s(M)\To H^{s+\frac{1}{2}}(M) \]
for all $s\in\bZ$.  Furthermore, from \eqref{e-gue141228b}, we observe that
\begin{equation}\label{e-gue141228bVIII}
(\oP_4^\prime+t+\pi)(\tau G_t\tau+\tau A_{K,t}\tau)=\tau+(\tau A_t\tau)^{2K+5}.
\end{equation}
From \eqref{e-gue141228bII}, we see that
\[(\tau A_t\tau)^{2K+4}=O(1)\colon H^s(M)\To H^{s+K+2}(M) \]
for all $s\in\bZ$.  Moreover, from \eqref{e-gue141204aI} and \eqref{e-gue141228bI}, we observe that
\[\tau A_t\tau=O(\frac{1}{1+t})\colon H^s(M)\To H^{s-2}(M) \]
for all $s\in\bZ$.  Thus, $(\tau A_t\tau)^{2K+5}=O(\frac{1}{1+t})\colon H^s(M)\To H^{s+K}(M)$ for all $s\in\bZ$.  Combining this with~\eqref{e-gue141228bVIII} yields the result.
\end{proof}

\begin{rem}\label{r-gue150101}
It is easy to see that $A_{K,t}$ depends continuously on $t$ in $L^2(M)$.
\end{rem}

From now on, we identify the operator $(\oP_4^\prime+t+\pi)^{-1}\colon\hat{\mathcal{P}}\To\hat{\mathcal{P}}$ with $\tau(\oP_4^\prime+t+\pi)^{-1}\tau$. Thus $(\oP_4^\prime+t+\pi)^{-1}\colon L^2(M)\To L^2(M)$. We can extend and identify this operator as follows.

\begin{prop}\label{p-gue141229}
$(\oP_4^\prime+t+\pi)^{-1}$ can be continuously extended to $(\oP_4^\prime+t+\pi)^{-1}\colon H^s(M)\To H^s(M)$ for every $s\in\mathbb Z$.  Moreover, for every $K\in\mathbb N_0$ we have 
\[(\oP_4^\prime+t+\pi)^{-1}-(\tau G_t\tau+\tau A_{2K,t}\tau)=O(\frac{1}{1+t})\colon H^{-K}(M)\To H^K(M),\]
where $A_{2K,t}$ is as in Proposition~\ref{p-gue141228}. 
\end{prop}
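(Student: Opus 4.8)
The plan is to deduce everything from the parametrix identity of Proposition~\ref{p-gue141228}, applied with the index $2K$ in place of $K$, together with the $L^2$-bound~\eqref{e-gue140816f} and the fact (Theorem~\ref{t-gue140814I}) that $\oP_4^\prime$ is a genuine elliptic pseudodifferential operator of order $2$ on $\mP$. Write $B_t=\tau G_t\tau+\tau A_{2K,t}\tau$ and $R_t=\tau R_{2K,t}\tau$, so that Proposition~\ref{p-gue141228}(3) reads $(\oP_4^\prime+t+\pi)B_t=\tau+R_t$ on $\mP$, i.e.\ $(\oP_4^\prime+t+\pi)B_t=I+R_t$ on $\hat\mP$ (where $\tau$ acts as the identity), and $R_t=O(\frac{1}{1+t})\colon H^s\To H^{s+2K}$ for all $s\in\bZ$. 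Since $\oP_4^\prime+t+\pi$ is invertible on $\hat\mP$ with $L^2$-bounded inverse and both sides of this identity map into $\hat\mP$, composing on the left with $(\oP_4^\prime+t+\pi)^{-1}$ and using $(\oP_4^\prime+t+\pi)^{-1}\tau=(\oP_4^\prime+t+\pi)^{-1}$ gives the key identity
\[ (\oP_4^\prime+t+\pi)^{-1}=B_t-(\oP_4^\prime+t+\pi)^{-1}R_t \qquad\text{on }\hat\mP. \]

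First I would prove the boundedness $(\oP_4^\prime+t+\pi)^{-1}\colon H^s\To H^s$. Fix $T_0$ so large that $\norm{R_t}_{L^2\to L^2}\leq\frac12$ for $t\geq T_0$, which is possible since $R_t=O(\frac{1}{1+t})$. For $t\geq T_0$ the operator $I+R_t$ is invertible on $\hat\mP$ by Neumann series, and since $R_t$ is smoothing of order $2K$ the inverse is bounded on $H^s$ for $\abs{s}\leq 2K$ uniformly in $t\geq T_0$, with $(I+R_t)^{-1}-I$ a smoothing operator of size $O(\frac{1}{1+t})$; hence $(\oP_4^\prime+t+\pi)^{-1}=B_t(I+R_t)^{-1}$ (a right inverse of a bijective map is its inverse) is bounded $H^s\To H^{s+\frac{1}{2}}$, in fact $O(1)$. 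For $0<t<T_0$ I would instead use elliptic regularity: by Theorem~\ref{t-gue140814I} and the standard parametrix construction for an order-$2$ elliptic pseudodifferential operator on $\mP$, $(\oP_4^\prime+t+\pi)^{-1}$ maps $H^s\cap\hat\mP$ into $H^{s+2}$ for $s\geq 0$, and by the continuous dependence of the resolvent on $t$ over the compact interval $[0,T_0]$—the endpoint $t=0$ being harmless since $\oP_4^\prime+\pi$ is invertible—the bound is uniform there. The case $s<0$ then follows from the case $s>0$ by self-adjointness of $(\oP_4^\prime+t+\pi)^{-1}$ on $\hat\mP$.

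For the quantitative statement, rewrite the key identity as
\[ (\oP_4^\prime+t+\pi)^{-1}-(\tau G_t\tau+\tau A_{2K,t}\tau)=-(\oP_4^\prime+t+\pi)^{-1}\,\tau R_{2K,t}\tau. \]
Here $\tau R_{2K,t}\tau=O(\frac{1}{1+t})\colon H^{-K}\To H^{K}$ by Proposition~\ref{p-gue141228}—this is precisely why the index $2K$ is used—while $(\oP_4^\prime+t+\pi)^{-1}\colon H^K\To H^K$ is bounded by the previous paragraph, with norm $O(\frac{1}{1+t})$ for $t\geq T_0$ (via $B_t(I+R_t)^{-1}$ and the size bounds for $\tau G_t\tau$ and $\tau A_{2K,t}\tau$ from Lemma~\ref{l-gue141227} and Proposition~\ref{p-gue141228}) and simply bounded, hence trivially $O(\frac{1}{1+t})$, on the compact range $0<t<T_0$. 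Composing the two factors yields $O(\frac{1}{1+t})\colon H^{-K}\To H^{K}$, as claimed.

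The step I expect to be the main obstacle is the regularity for bounded $t$: passing from the $L^2$-bound~\eqref{e-gue140816f} to $H^s$-boundedness without the smallness of $R_t$ that is only available for large $t$. This is exactly where one must exploit that $\oP_4^\prime$ is truly elliptic of order $2$ on $\mP$ (Theorem~\ref{t-gue140814I}), rather than merely a compact perturbation of a parametrix, and combine elliptic regularity with the continuity of the resolvent in $t$ on a compact parameter interval.
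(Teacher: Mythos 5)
Your overall skeleton is the paper's: both arguments start from the parametrix identity of Proposition~\ref{p-gue141228} with index $2K$, rearrange it to
\[ (\oP_4^\prime+t+\pi)^{-1}-(\tau G_t\tau+\tau A_{2K,t}\tau)=-(\oP_4^\prime+t+\pi)^{-1}\tau R_{2K,t}\tau, \]
and conclude by composing the mapping properties of the two factors on the right. The final quantitative step is essentially identical to the paper's.

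The genuine gap is in your treatment of the $H^s$-boundedness of the resolvent for $s>0$ and bounded $t$. You invoke ``the standard parametrix construction for an order-$2$ elliptic pseudodifferential operator on $\mP$,'' but $\oP_4^\prime=\tau E_2$ sandwiched with the projector $\tau$ is \emph{not} a pseudodifferential operator ($\tau$ is a Fourier integral operator with complex phase), so classical elliptic regularity is not available; the entire content of Sections~\ref{sec:hsymbol}--\ref{sec:hsqrt} is to work around precisely this. Nor does the parametrix identity itself bootstrap positive regularity: writing $u=B_tf-(\oP_4^\prime+t+\pi)^{-1}R_tf$ with $f\in H^s$, $s>0$, the error term $(\oP_4^\prime+t+\pi)^{-1}R_tf$ is controlled only in $L^2$ via~\eqref{e-gue140816f}, no matter how smoothing $R_t$ is, and iterating the identity never improves this. (Your Neumann-series argument does handle $s>0$ for $t\geq T_0$, but for small $t$ the operator $I+R_t$ need not be invertible, and the appeal to ``continuity of the resolvent in $t$'' is circular: continuity in the $H^s$ operator norm presupposes the $H^s$-boundedness you are trying to prove.) The paper's resolution is the opposite of yours: the identity extends the resolvent directly to $H^{-s}$ for $0\leq s\leq 2K$, because $\tau R_{2K,t}\tau$ maps $H^{-s}$ into $L^2$ with norm $O(\frac{1}{1+t})$ and the resolvent is already known on $L^2$; this gives $(\oP_4^\prime+t+\pi)^{-1}=O(\frac{1}{1+t})\colon H^{-s}\to H^{-s}$ uniformly in \emph{all} $t>0$ in one stroke, with no large/small $t$ splitting, and the positive-$s$ case then follows by taking adjoints. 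You apply the duality in the unworkable direction, deducing $s<0$ from $s>0$ rather than the reverse.
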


\begin{proof}
Fix $K\in\mathbb N_0$ and let $A_{2K,t}$ and $R_{2K,t}$ be as in Proposition~\ref{p-gue141228}. Then 
\begin{equation}\label{e-gue141229}
\tau G_t\tau+\tau A_{2K,t}\tau=(\oP_4^\prime+t+\pi)^{-1}+(\oP_4^\prime+t+\pi)^{-1}\tau R_{2K,t}\tau.
\end{equation}
Note that $\tau R_{2K,t}\tau=O(\frac{1}{1+t})\colon H^{-s}(M)\To L^2(M)$ for all $s\in\bZ$ with $\abs{s}\leq 2K$.  By~\eqref{e-gue141204aI}, $\tau G_t\tau+\tau A_{2K,t}\tau=O(\frac{1}{1+t})\colon H^s(M)\To H^{s}(M)$ for all $s\in\bZ$.  By~\eqref{e-gue140816f}, we observe that $(\oP_4^\prime+t+\pi)^{-1}=O(\frac{1}{1+t})\colon L^2(M)\To L^2(M)$. From these observations we conclude that we can extend to $(\oP_4^\prime+t+\pi)^{-1}$ to $H^{-s}(M)$ for all $s\in\bN_0$ with $s\leq 2K$; indeed
\begin{equation}\label{e-gue141229I}
(\oP_4^\prime+t+\pi)^{-1}=O(\frac{1}{1+t})\colon H^{-s}(M)\To H^{-s}(M)
\end{equation}
for all $s\in\bN_0$ with $s\leq 2K$.  By taking the adjoint in \eqref{e-gue141229I}, we conclude that we can extend to $(\oP_4^\prime+t+\pi)^{-1}$ to
\begin{equation}\label{e-gue141229II}
(\oP_4^\prime+t+\pi)^{-1}=O(\frac{1}{1+t})\colon H^{s}(M)\To H^{s}(M)
\end{equation}
for all $s\in\bN_0$ with $s\leq 2K$.  From~\eqref{e-gue141229} and~\eqref{e-gue141229II} we conclude that 
\[(\oP_4^\prime+t+\pi)^{-1}-(\tau G_t\tau+\tau A_{2K,t}\tau)=O(\frac{1}{1+t})\colon H^{-K}(M)\To H^K(M). \qedhere \]
\end{proof}

This allows us to prove the following theorem.

\begin{thm}\label{t-gue141229I}
There is a $G\in L_{{\rm cl\,}}^{-1}(M)$ such that $2GE_1-I\in L_{{\rm cl\,}}(M)$ for $E_1\in L_{{\rm cl\,}}^1(M)$ as in Theorem~\ref{t-gue140814I} and for every $\ell\in\bN_0$,
\[(\oP_4^\prime)^{-\frac{1}{2}}=\tau G\tau+\tau A_\ell\tau+\tau R_\ell\tau \]
on $\hat\mP$, where $A_\ell,R_\ell\colon C^\infty(M)\to\mathscr D'(M)$ are continuous operators, $R_\ell(x,y)\in C^\ell(M\times M)$, and $A_\ell$ is a smooth operator of order $3-\varepsilon$ for every $0<\varepsilon<1$.
\end{thm}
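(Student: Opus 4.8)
The plan is to substitute the parametrix of Proposition~\ref{p-gue141228} into the formula of Lemma~\ref{l-gue140815} and integrate in $t$; we work throughout under the standing hypothesis that $\oP_4^\prime$ is nonnegative. First I construct the operator $G$. Set $G:=c\int^\infty_0t^{-\frac12}G_t\,dt$, with $G_t\in L^{-2}_{{\rm cl\,},2}(M,\bR_+)$ and $c$ as in Theorem~\ref{t-gue140816} and Lemma~\ref{l-gue140815}. Since $G_t=O(\frac{1}{1+t})\colon H^s(M)\To H^s(M)$ and $\int^\infty_0t^{-\frac12}(1+t)^{-1}dt<\infty$, this integral converges in $\mathcal L(H^s(M),H^s(M))$ for every $s\in\bZ$, so $G$ is a well-defined continuous operator on $M$. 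I claim $G\in L^{-1}_{{\rm cl\,}}(M)$: in a coordinate patch, writing the symbol of $G_t$ as $g(x,\xi,t)\sim\sum_{j\geq0}g_{-2-j}(x,\xi,t)$ with $g_{-2-j}$ positively homogeneous of degree $-2-j$ in $(\xi,t^{\frac12})$, the substitution $t=\abs{\xi}^2s$ shows that $\int^\infty_0t^{-\frac12}g_{-2-j}(x,\xi,t)\,dt$ is finite, smooth, and positively homogeneous of degree $-1-j$ in $\xi$ for $\abs{\xi}\geq1$, while the remainder bounds for $g-\sum_{j<k}g_{-2-j}\in S^{-2-k}_{1,0,2}$ give $\int^\infty_0t^{-\frac12}\bigl(g-\sum_{j<k}g_{-2-j}\bigr)dt\in S^{-1-k}_{1,0}$ and the contribution of $\abs{\xi}\leq2$ is smoothing. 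Using $g_0(x,\xi,t)=(e_2(x,\xi)+t)^{-1}$ and $c\int^\infty_0t^{-\frac12}(a+t)^{-1}dt=a^{-\frac12}$ for $a>0$, the principal symbol of $G$ is $e_2(x,\xi)^{-\frac12}$. By Theorem~\ref{t-gue140814I}, $\sigma_2(E_2)=\bigl(\sigma_1(2E_1)\bigr)^2$, and $\sigma_1(E_1)$ is real-valued and bounded below by $c\abs{\xi}$ for $\abs{\xi}\geq1$ (Theorem~\ref{t-gue140813}); hence $e_2^{-\frac12}=\bigl(2\sigma_1(E_1)\bigr)^{-1}$, so $2GE_1$ has order-zero principal symbol identically $1$ and $2GE_1-I\in L_{{\rm cl\,}}(M)$.

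Next I assemble $(\oP_4^\prime)^{-1/2}$. By the spectral description preceding Lemma~\ref{l-gue140815} (on $\ker\oP_4^\prime$ the operator $(\oP_4^\prime+t+\pi)^{-1}$ acts as multiplication by $(1+t)^{-1}$), that lemma yields on all of $\hat\mP$ the identity
\[ (\oP_4^\prime)^{-\frac12}=c\int^\infty_0t^{-\frac12}(\oP_4^\prime+t+\pi)^{-1}\,dt-\pi . \]
Fix $\ell\in\bN_0$ and put $K:=\ell+2$. Let $A_{2K,t}$ be as in Proposition~\ref{p-gue141228}, and write $(\oP_4^\prime+t+\pi)^{-1}=\tau G_t\tau+\tau A_{2K,t}\tau+\Phi_t$, where by Proposition~\ref{p-gue141229} we have $\Phi_t=O(\frac{1}{1+t})\colon H^{-K}(M)\To H^K(M)$. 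Since $\tau$ is continuous on $H^s(M)$ for every $s$ and each integrand decays like $t^{-\frac12}(1+t)^{-1}$, we may integrate termwise. Setting
\[ A_\ell:=c\int^\infty_0t^{-\frac12}A_{2K,t}\,dt , \qquad R_\ell:=\tau\Bigl(c\int^\infty_0t^{-\frac12}\Phi_t\,dt-\pi\Bigr)\tau , \]
and using $\tau G\tau=c\int^\infty_0t^{-\frac12}\tau G_t\tau\,dt$ (and likewise for the other pieces), $\tau^2=\tau$, and the fact that on $\hat\mP$ every operator involved takes values in $\hat\mP$, we obtain $(\oP_4^\prime)^{-\frac12}=\tau G\tau+\tau A_\ell\tau+\tau R_\ell\tau$ on $\hat\mP$.

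It remains to check the properties of $A_\ell$ and $R_\ell$. By Proposition~\ref{p-gue141228}(2), $A_{2K,t}$ is a smoothing operator of order $1$ with size $(1+t)^{-1}$; since $\int^\infty_0t^{-\frac12}(1+t)^{-1}dt<\infty$, the kernel bounds and cancellation conditions of Definition~\ref{d-gue141226} integrate directly, so $A_\ell$ is a smoothing operator of order $1$, hence of order $3-\varepsilon$ for every $0<\varepsilon<1$. For $R_\ell$, the bound on $\Phi_t$ and the integrability of $t^{-\frac12}(1+t)^{-1}$ show that $c\int^\infty_0t^{-\frac12}\Phi_t\,dt$ is bounded $H^{-K}(M)\To H^K(M)$; adding the smoothing finite-rank operator $\pi$ (kernel $\sum^d_{j=1}g_j(x)\overline{g_j(y)}\in C^\infty(M\times M)$) and composing with $\tau$ (continuous on every $H^s$), $R_\ell\colon H^{-K}(M)\To H^K(M)$ is bounded. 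Consequently, for all multi-indices with $\abs{\alpha},\abs{\beta}\leq\ell$, the operator $\pr^\alpha_x\circ R_\ell\circ\pr^\beta_y$ maps $H^{-(K-\ell)}(M)$ to $H^{K-\ell}(M)$ boundedly; since $K-\ell=2>\tfrac32=\tfrac12\dim M$, its Schwartz kernel is continuous, whence $R_\ell(x,y)\in C^\ell(M\times M)$. This completes the proof.

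The substantive step is the construction of $G$: namely, that integrating the parameter-dependent parametrix $G_t$ against $t^{-1/2}\,dt$ lands in the classical calculus $L^{-1}_{{\rm cl\,}}(M)$ with principal symbol $e_2^{-1/2}$. The remaining steps are bookkeeping built on Theorem~\ref{t-gue140816} and Propositions~\ref{p-gue141228} and~\ref{p-gue141229}, the only mild subtleties being the interchange of the $t$-integral with the fixed operators $\tau,\pi$ and the restriction to $\hat\mP$, and the Sobolev-embedding argument fixing $K$ in terms of $\ell$.
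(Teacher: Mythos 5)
Your overall strategy is the same as the paper's: write $(\oP_4^\prime)^{-\frac12}$ via Lemma~\ref{l-gue140815}, substitute the decomposition $(\oP_4^\prime+t+\pi)^{-1}=\tau G_t\tau+\tau A_{2K,t}\tau+\Xi_{2K,t}$ from Propositions~\ref{p-gue141228} and~\ref{p-gue141229}, and integrate each piece against $t^{-\frac12}\,dt$. The construction of $G$ (which the paper delegates to Shubin) and the Sobolev-embedding argument giving $R_\ell(x,y)\in C^\ell(M\times M)$ are both fine, as is your observation that the kernel contribution can be absorbed by subtracting $\pi$.

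However, the treatment of $A_\ell$ contains a genuine error, and it is precisely the step to which the paper devotes most of its proof. You assert that since $A_{2K,t}$ is a smoothing operator of order $1$ with size $\frac{1}{1+t}$ and $\int_0^\infty t^{-\frac12}(1+t)^{-1}dt<\infty$, the bounds ``integrate directly, so $A_\ell$ is a smoothing operator of order $1$, hence of order $3-\varepsilon$.'' This implication is backwards. In the convention of Section~\ref{sec:hsqrt}, a smoothing operator of order $j$ has kernel bounds $\vartheta(x,y)^{-4+j-\abs{\alpha}}$ and cancellation bounds $r^{j-\abs{\alpha}}$, so \emph{larger} $j$ is the \emph{stronger} statement: order $1$ allows a $\vartheta(x,y)^{-3}$ singularity, whereas order $3-\varepsilon$ demands $\vartheta(x,y)^{-1-\varepsilon}$. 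Integrating only the order-$1$/size-$\frac{1}{1+t}$ description therefore yields only an order-$1$ operator, which is far too singular for the intended application (Theorem~\ref{t-gue140827} needs the error term in $\mE(\rho^{-1-\varepsilon})$). Nor can you instead integrate the order-$3$/size-$\frac{1}{\sqrt{1+t}}$ description alone, since $\int_0^\infty t^{-\frac12}(1+t)^{-\frac12}dt$ diverges. The missing idea is to use \emph{both} descriptions of $A_{2K,t}$ simultaneously, splitting the $t$-integral at the distance-dependent threshold $t=r^{-4}$ with $r=\vartheta(x,y)$ (respectively, $r$ the radius of the ball supporting the bump function): on $0\leq t\leq r^{-4}$ the order-$3$ description gives $\int_0^{r^{-4}}t^{-\frac12}(1+t)^{-\frac12}dt\,\vartheta^{-1}\lesssim\vartheta^{-1}\abs{\log\vartheta}$, and on $t\geq r^{-4}$ the order-$1$ description gives $\int_{r^{-4}}^\infty t^{-\frac12}(1+t)^{-1}dt\,\vartheta^{-3}\lesssim r^2\vartheta^{-3}=\vartheta^{-1}$; together these produce the order $3-\varepsilon$ kernel and cancellation estimates, as carried out in~\eqref{e-gue141230ab}--\eqref{e-gue150117II}. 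A secondary, lesser point: the paper also justifies that the $t$-integrals of these operator families have kernels satisfying the claimed pointwise bounds by passing through Riemann sums and extracting convergent subsequences; your ``integrate directly'' glosses over this, but that part is routine once the correct splitting is in place.
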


\begin{proof}
Fix $\ell\in\mathbb N_0$ and take $K\gg\ell$. Put 
\[\Xi_{2K,t}=(\oP_4^\prime+t+\pi)^{-1}-(\tau G_t\tau+\tau A_{2K,t}\tau),\]
where $A_{2K,t}$ is as in Proposition~\ref{p-gue141228}. By Proposition~\ref{p-gue141229}, $\Xi_{2K,t}$ is well-defined as a continuous operator $H^s(M)\To H^s(M)$ for every $s\in\mathbb Z$. Observe that $\Xi_{2K,t}=\tau\Xi_{2K,t}\tau$. From Lemma~\ref{l-gue140815} we see that 
\begin{equation}\label{e-gue141230I}
(\oP_4^\prime)^{-\frac{1}{2}}=c\int^\infty_0t^{-\frac{1}{2}}\tau G_t\tau dt+c\int^\infty_0t^{-\frac{1}{2}}\tau A_{2K,t}\tau dt+c\int^\infty_0t^{-\frac{1}{2}}\tau\Xi_{2K,t}\tau dt.
\end{equation}
It is known that (see~\cite{Shubin2001})
\begin{equation}\label{e-gue141230II}
c\int^\infty_0t^{-\frac{1}{2}}\tau G_t\tau dt=\tau G\tau,
\end{equation}
where $G\in L^{-1}_{{\rm cl\,}}(M)$ with $2GE_1-I\in L^{-1}_{{\rm cl\,}}(M)$.

We claim that 
\begin{equation}\label{e-gue141230a}
\Xi(x,y):=(c\int^\infty_0t^{-\frac{1}{2}}\Xi_{2K,t}dt)(x,y)\in C^\ell(M\times M)
\end{equation}
if $K$ is large enough.  Fix $k\in\mathbb N_0$. For every $m\in\mathbb N$, consider 
\[\Xi_{k,m}:=c\sum^m_{j=1}\frac{1}{m}\Xi_{2K,k+\frac{j}{m}}\frac{1}{\sqrt{k+\frac{j}{m}}}.\]
It is clear that, in $L^2(M)$, 
\begin{equation}\label{e-gue141230aI}
\lim_{m\To\infty}\Xi_{k,m}=c\int^{k+1}_kt^{-\frac{1}{2}}\Xi_{2K,t}dt .
\end{equation}
By Proposition~\ref{p-gue141229}, we see that 
\begin{equation}\label{e-gue141230aII}
\norm{\Xi_{k,m}}_{\mathscr L(H^{-K}(M), H^{K}(M))}\leq c_1\sum^m_{j=1}\frac{1}{m}\frac{1}{1+k+\frac{j}{m}}\frac{1}{\sqrt{k+\frac{j}{m}}}\leq c_1\int^{k+1}_k\frac{1}{(1+t)\sqrt{t}}dt,
\end{equation}
where $c_1>0$ is a constant and $\norm{\Xi_{k,m}}_{\mathscr L(H^{-K}(M), H^{K}(M))}$ denotes the standard operator norm of $\Xi_{k,m}$ in $\mathscr L(H^{-K}(M), H^{K}(M))$. From \eqref{e-gue141230aII} and the Sobolev embedding theorem, if $K\gg\ell$, there is a subsequence $(m_s)$ such that $m_s\to\infty$ as $s\to\infty$,
\begin{equation}\label{e-gue141230aIII}
\lim_{s\To\infty}\Xi_{k,m_s}(x,y)=\Xi_{k}(x,y)
\end{equation}
in $C^\ell(M\times M)$, and 
\begin{equation}\label{e-gue141230aIV}
\norm{\Xi_{k}(x,y)}_{C^\ell(M\times M)}\leq\Td c_1\int^{k+1}_k\frac{1}{(1+t)\sqrt{t}}dt,
\end{equation}
where $\Td c_1>0$ is a constant independent of $k$.
From \eqref{e-gue141230aI}, \eqref{e-gue141230aIII} and \eqref{e-gue141230aIV}, we conclude that 
\begin{equation}\label{e-gue141230aV}
\begin{split}
&\Xi_k(x,y)=(c\int^{k+1}_kt^{-\frac{1}{2}}\Xi_{2K,t}dt)(x,y)\in C^\ell(M\times M),\\
&\norm{c\int^{k+1}_kt^{-\frac{1}{2}}\Xi_{2K,t}dt)}_{C^\ell(M\times M)}\leq\Td c_1\int^{k+1}_k\frac{1}{(1+t)\sqrt{t}}dt.
\end{split}
\end{equation}
Since $\sum^\infty_{k=0}\int^{k+1}_k\frac{1}{(1+t)\sqrt{t}}dt=\int^\infty_0\frac{1}{(1+t)\sqrt{t}}dt<\infty$, we deduce that
\[\Xi(x,y)=(c\int^\infty_0t^{-\frac{1}{2}}\Xi_{2K,t}dt)(x,y)=\sum^\infty_{k=0}\Xi_k(x,y)\in C^\ell(M\times M) ,\]
as claimed. 

From now on, we take $K$ large enough so that $\Xi(x,y)\in C^\ell(M\times M)$.  Put $A:=c\int^\infty_0t^{-\frac{1}{2}}A_{2K,t}dt$. We now study the kernel of $A$. Fix $x_0, y_0\in M$ and set $\vartheta(x_0,y_0)=r$. Put 
\[B_{x_0}(\frac{r}{4})=\set{z\in M;\, \vartheta(z,x_0)<\frac{r}{4}},\ \ B_{y_0}(\frac{r}{4})=\set{z\in M;\, \vartheta(z,y_0)<\frac{r}{4}}.\] 
Take $\chi\in C^\infty_0(B_{x_0}(\frac{r}{4}))$ and $\chi_1\in C^\infty_0(B_{y_0}(\frac{r}{4}))$ such that $\chi=1$ near $x_0$ and $\chi_1=1$ near $y_0$. Consider $\Td A:=c\int^\infty_0t^{-\frac{1}{2}}\chi A_{2K,t}\chi_1dt$. Then, 
\begin{equation}\label{e-gue141230ab}
\Td A=c\int^{r^{-4}}_0t^{-\frac{1}{2}}\chi A_{2K,t}\chi_1dt+c\int^{\infty}_{r^{-4}}t^{-\frac{1}{2}}\chi A_{2K,t}\chi_1dt.
\end{equation}
For every $m\in\mathbb N$, consider 
\[B_m=c\sum^m_{j=1}\frac{r^{-4}}{m}(\chi A_{2K,\frac{j}{m}r^{-4}}\chi_1)(\frac{j}{m}r^{-4})^{-\frac{1}{2}}.\]
It is easy to see that, in $L^2$,
\begin{equation}\label{e-gue141230abI}
\lim_{m\To\infty}B_m=c\int^{r^{-4}}_0t^{-\frac{1}{2}}\chi A_{2K,t}\chi_1dt .
\end{equation}
Recall from Proposition~\ref{p-gue141228} that $A_{2K,t}$ is a smoothing operator of order $3$ with size $\frac{1}{\sqrt{1+t}}$. From this and \eqref{e-gue141230abI}, we have that, for any $x\in B_{x_0}(\frac{r}{4})$, $y\in B_{y_0}(\frac{r}{4})$, 
\begin{align*}
\abs{B_m(x,y)}&\leq c\sum^m_{j=1}\frac{r^{-4}}{m}\abs{(\chi A_{2K,\frac{r^{-4}j}{m}}\chi_1)(x,y)}(\frac{r^{-4}j}{m})^{-\frac{1}{2}}\\
&\leq c\sum^m_{j=1}\frac{r^{-4}}{m}\frac{1}{\sqrt{1+\frac{r^{-4}j}{m}}}\vartheta(x,y)^{-1}(\frac{r^{-4}j}{m})^{-\frac{1}{2}}\\
&\leq c_2\vartheta(x,y)^{-1}\int^{r^{-4}}_0\frac{1}{\sqrt{1+t}}t^{-\frac{1}{2}}dt\\
&\leq c_3\vartheta(x,y)^{-1}\abs{\log\vartheta(x,y)},
\end{align*}
where $c_2>0$, $c_3>0$ are constants independent of $m$, $r$, $\chi$, $\chi_1$, $x_0$, $y_0$. Similarly, for every $\alpha_1, \alpha_2\in\mathbb N_0$ and $\varepsilon>0$, there is a constant $C_{\alpha_1,\alpha_2,\varepsilon}$, independent of $m$, $r$, $\chi$, $\chi_1$, $x_0$, $y_0$, such that 
\begin{equation}\label{e-gue150101}
\abs{(\nabla_b)^{\alpha_1}_x(\nabla_b)^{\alpha_1}_yB_m(x,y)}\leq C_{\alpha_1,\alpha_2,\varepsilon}\vartheta(x,y)^{-1-\abs{\alpha_1}-\abs{\alpha_2}-\varepsilon}.
\end{equation}
From \eqref{e-gue150101}, we deduce that there is a subsequence $(m_s)$ such that $m_s\to\infty$ as $s\to\infty$ for which $B_{m_s}(x,y)$ converges to some $B(x,y)$ in the $C^\infty(M\times M)$ topology with the property that for every $\alpha_1,\alpha_2\in\bN_0$ and every $\varepsilon>0$, there is a constant $C_{\alpha_1,\alpha_2,\varepsilon}$, independent of $m,r,\chi,\chi_1,x_0,y_0$, such that
\begin{equation}\label{e-gue150101I}
\abs{(\nabla_b)^{\alpha_1}_x(\nabla_b)^{\alpha_2}_yB(x,y)}\leq C_{\alpha_1,\alpha_2,\varepsilon}\vartheta(x,y)^{-1-\abs{\alpha_1}-\abs{\alpha_2}-\varepsilon}.
\end{equation}
In particular, from~\eqref{e-gue141230abI} we have that
\begin{equation}\label{e-gue150101II-I}
(c\int^{r^{-4}}_0t^{-\frac{1}{2}}\chi A_{2K,t}\chi_1dt)(x,y)=B(x,y).
\end{equation} 

Fix $k\in\mathbb N_0$. For every $m\in\mathbb N$, consider 
\[D_{k,m}:=c\sum^m_{j=1}\frac{r^{-4}}{m}\chi A_{2K,r^{-4}k+\frac{j}{m}r^{-4}}\chi_1\frac{1}{\sqrt{r^{-4}k+\frac{j}{m}r^{-4}}}.\]
It is clear that 
\begin{equation}\label{e-gue141230abVI}
\lim_{m\To\infty}D_{k,m}=c\int^{r^{-4}(k+1)}_{r^{-4}k}t^{-\frac{1}{2}}\chi A_{2K,t}\chi_1dt
\end{equation}
in $L^2(M)$.  Recall from Proposition~\ref{p-gue141228} that $A_{2K,t}$ is a smoothing operator of order $1$ with size $\frac{1}{1+t}$. From this observation, we find that for every $x\in B_{x_0}(\frac{r}{4})$, $y\in B_{y_0}(\frac{r}{4})$, we have that
\begin{equation}\label{e-gue141231I}
\begin{split}
\abs{D_{k,m}(x,y)}&\leq\Td c_1\sum^m_{j=1}\frac{r^{-4}}{m}\abs{(\chi A_{2K,r^{-4}k+\frac{j}{m}r^{-4}}\chi_1)(x,y)}(r^{-4}k+\frac{j}{m}r^{-4})^{-\frac{1}{2}}\\
&\leq\Td c_2\sum^m_{j=1}\frac{r^{-4}}{m}\vartheta(x,y)^{-3}\frac{1}{1+r^{-4}k+\frac{j}{m}r^{-4}}(r^{-4}k+\frac{j}{m}r^{-4})^{-\frac{1}{2}}\\
&\leq\Td c_2\int^{r^{-4}(k+1)}_{r^{-4}k}\vartheta(x,y)^{-3}\frac{1}{1+t}\frac{1}{\sqrt{t}}dt,
\end{split}
\end{equation}
where $\Td c_1>0$, $\Td c_2>0$ are constants independent of $k$, $m$, $r$, $\chi$, $\chi_1$, $x_0$, $y_0$. Similarly, for every $\alpha_1, \alpha_2\in\mathbb N_0$, there is a constant $C_{\alpha_1,\alpha_2}$, independent of $m,k,r,\chi,\chi_1,x_0,y_0$, such that 
\[ \abs{(\nabla_b)^{\alpha_1}_x(\nabla_b)^{\alpha_1}_yD_{k,m}(x,y)}\leq C_{\alpha_1,\alpha_2}\int^{r^{-4}(k+1)}_{r^{-4}k}\vartheta(x,y)^{-3-\abs{\alpha_1}-\abs{\alpha_2}}\frac{1}{1+t}\frac{1}{\sqrt{t}}dt. \]
Therefore there is a subsequence $(m_s)$ such that $m_s\to s$ as $s\to\infty$ for which $D_{k,m_s}(x,y)$ converges to some $D_k(x,y)$ in the $C^\infty(M\times M)$ topology with the property that for every $\alpha_1,\alpha_2\in\bN_0$ and every $\varepsilon>0$, there is a constant $\Td C_{\alpha_1,\alpha_2,\varepsilon}$ such that
\begin{equation}\label{e-gue150101IV}
\abs{(\nabla_b)^{\alpha_1}_x(\nabla_b)^{\alpha_2}_yD_k(x,y)}\leq\Td C_{\alpha_1,\alpha_2,\varepsilon}\int^{r^{-4}(k+1)}_{r^{-4}k}\vartheta(x,y)^{-3-\abs{\alpha_1}-\abs{\alpha_2}-\varepsilon}\frac{1}{1+t}\frac{1}{\sqrt{t}}dt.
\end{equation}
In particular, from~\eqref{e-gue141230abVI} we find that 
\begin{equation}\label{e-gue150101V}
(c\int^{r^{-4}(k+1)}_{r^{-4}k}t^{-\frac{1}{2}}\chi A_{2K,t}\chi_1dt)(x,y)=D_k(x,y).
\end{equation} 
Note that for $x\in B_{x_0}(\frac{r}{4})$ and $y\in B_{y_0}(\frac{r}{4})$, 
\begin{equation}\label{e-gue150101VI}
\sum^\infty_{k=1}\int^{r^{-4}(k+1)}_{r^{-4}k}\vartheta(x,y)^{-3-\abs{\alpha_1}-\abs{\alpha_2}-\varepsilon}\frac{1}{1+t}\frac{1}{\sqrt{t}}dt 
\leq\hat c_0\vartheta(x,y)^{-1-\abs{\alpha_1}-\abs{\alpha_2}-\varepsilon},
\end{equation}
where $\hat c_0>0$ is a constant. From~\eqref{e-gue150101IV}, \eqref{e-gue150101V}, and~\eqref{e-gue150101VI} we deduce that 
$(c\int^{\infty}_{r^{-4}}t^{-\frac{1}{2}}\chi A_{2K,t}\chi_1dt)(x,y)\in C^\infty(M\times M)$ and for every $\alpha_1, \alpha_2\in\mathbb N_0$ and $\varepsilon>0$, there is a constant $\hat C_{\alpha_1,\alpha_2,\varepsilon}$, independent of $r$, $\chi$, $\chi_1$, $x_0$, $y_0$, such that 
\begin{equation}\label{e-gue150101VII}
\abs{(\nabla_b)^{\alpha_1}_x(\nabla_b)^{\alpha_2}_y\Bigr(c\int^{\infty}_{r^{-4}}t^{-\frac{1}{2}}\chi A_{2K,t}\chi_1dt\Bigr)(x,y)}\leq\hat C_{\alpha_1,\alpha_2,\varepsilon}\vartheta(x,y)^{-1-\abs{\alpha_1}-\abs{\alpha_2}-\varepsilon}.
\end{equation}

From \eqref{e-gue141230ab}, \eqref{e-gue150101I}, \eqref{e-gue150101II-I} and \eqref{e-gue150101VII}, we deduce that 
$A(x,y)$ satisfies the following differential inequalities when $x\neq y$: For every $\varepsilon>0$ and every $\alpha_1, \alpha_2\in\mathbb N_0$, there is a constant $C_{\alpha_1,\alpha_2,\varepsilon}>0$ independent of $x$ and $y$ such that
\begin{equation}\label{e-gue141230}
|(\nabla_b)_x^{\alpha_1} (\nabla_b)_y^{\alpha_2}A(x,y)| \leq C_{\alpha_1,\alpha_2,\varepsilon}\vartheta(x,y)^{-1-\varepsilon-|\alpha|}
\end{equation}
for all $\lvert\alpha\rvert=\lvert\alpha_1\rvert+\lvert\alpha_2\rvert$.

Now, we prove that $A$ satisfies the cancellation condition of order $3-\varepsilon$ for every $0<\varepsilon<1$. Let $\phi$ be a normalized bump function in $B(x,r)$. Then 
\begin{equation}\label{e-gue150117}
\begin{split}
&\norm{\nabla^\alpha_bA\phi}_{L^\infty(B(x,r))}\\
&\leq c\int^{r^{-4}}_0t^{-\frac{1}{2}}\norm{\nabla^\alpha_bA_{2K,t}\phi}_{L^{\infty}(B(x,r))}+c\int^{\infty}_{r^{-4}}t^{-\frac{1}{2}}\norm{\nabla^\alpha_bA_{2K,t}\phi}_{L^{\infty}(B(x,r))}.
\end{split}
\end{equation}
Since $A_{2K,t}$ is a smoothing operator of order $3$ with size $\frac{1}{\sqrt{1+t}}$, we have that
\begin{equation}\label{e-gue150117I}
\int^{r^{-4}}_0t^{-\frac{1}{2}}\norm{\nabla^\alpha_bA_{2K,t}\phi}_{L^{\infty}(B(x,r))} \leq c_2r^{3-\abs{\alpha}}\abs{\log r},
\end{equation}
where $c_1>0$ and $c_2>0$ are constants independent of $r$. Since $A_{2K,t}$ is also a smoothing operator of order $1$ with size $\frac{1}{1+t}$, we have 
\begin{equation}\label{e-gue150117II}
\int^{\infty}_{r^{-4}}t^{-\frac{1}{2}}\norm{\nabla^\alpha_bA_{2K,t}\phi}_{L^{\infty}(B(x,r))}\leq \hat c_1r^{1-\abs{\alpha}}\int^{\infty}_{r^{-4}}t^{-\frac{1}{2}}\frac{1}{1+t}dt\leq \hat c_2r^{3-\abs{\alpha}},
\end{equation}
where $\hat c_1>0$ and $\hat c_2>0$ are constants independent of $r$. From \eqref{e-gue150117}, \eqref{e-gue150117I} and \eqref{e-gue150117II}, we deduce that $A$ satisfies the cancellation condition of order $3-\varepsilon$ for every $0<\varepsilon<1$. Similarly, $A^*$ satisfies the cancellation condition of order $3-\varepsilon$ for every $0<\varepsilon<1$. The conclusion follows from~\eqref{e-gue141230}.
\end{proof}

Now let us consider $2\Delta_b+\frac{1}{2}R$ extended to $L^2(M)$ in the standard way. Since $2\Delta_b+\frac{1}{2}R$ is hypoelliptic with loss of one derivative, 
\[2\Delta_b+\frac{1}{2}R\colon{\rm Dom\,}(2\Delta_b+\frac{1}{2}R)\to L^2(M)\]
has closed range, is self-adjoint, and ${\rm Ker\,}(2\Delta_b+\frac{1}{2}R)$ is a finite-dimensional subspace of $C^\infty(M)$. Let $\hat N\colon L^2(M)\to{\rm Dom\,}(2\Delta_b+\frac{1}{2}R)$ be the partial inverse and let $p\colon L^2(M)\to{\rm Ker\,}(2\Delta_b+\frac{1}{2}R)$ be the orthogonal projection. Then $p$ is a smoothing operator on $M$ and we have 
\[
\begin{split}
&\hat N(2\Delta_b+\frac{1}{2}R)+p=I\ \ \mbox{on ${\rm Dom\,}(2\Delta_b+\frac{1}{2}R)$},\\
&(2\Delta_b+\frac{1}{2}R)\hat N+p=I\ \ \mbox{on $L^2(M)$}.
\end{split}\]
Note that $\hat N\colon H^s(M)\to H^{s+1}(M)$ for all $z\in\bZ$.  Moreover, $\hat N$ is a smoothing operator of order $2$ (see~\cite[Section~10]{HsiaoYung2014} and~\cite{SteinYung13}).

\begin{prop}\label{p-gue150101}
With the notations above, $\tau G\tau-\tau\hat N$ is a smoothing operator of order $3$, where $G\in L^{-1}_{{\rm cl\,}}(M)$ is as in Theorem~\ref{t-gue141229I}.
\end{prop}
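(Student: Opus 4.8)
The plan is to exploit that, up to negligible errors, both $\tau G\tau$ and $\tau\hat N$ are left parametrices for the operator $\mathcal{Y}:=2\Delta_b+\tfrac12 R$ on $\mathcal{P}$, arranged by Theorem~\ref{t-gue140814I} to agree one order past the leading term. Set $D:=\tau G\tau-\tau\hat N$. Using $\mathcal{Y}\hat N+p=\hat N\mathcal{Y}+p=I$, where $p$ is the finite-rank smoothing projection onto $\Ker\mathcal{Y}$, we obtain on $C^\infty(M)$
\[ D=D(\mathcal{Y}\hat N+p)=(D\mathcal{Y})\hat N+Dp=\bigl(\tau G\tau\,\mathcal{Y}-\tau\hat N\mathcal{Y}\bigr)\hat N+Dp=\bigl(\tau G\tau\,\mathcal{Y}-\tau+\tau p\bigr)\hat N+Dp. \]
Because $p$ has a smooth finite-rank kernel and $\tau G\tau$, $\tau\hat N$ map $C^\infty(M)$ continuously into $C^\infty(M)$, the terms $\tau p\hat N$ and $Dp$ are smoothing of infinite order. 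Since $\hat N$ is a smoothing operator of order $2$, Theorem~\ref{thm2.2} reduces the proposition to showing that $\tau G\tau\,\mathcal{Y}-\tau$ is a smoothing operator of order $1$.

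To establish this I split $\tau G\tau\,\mathcal{Y}=\tau G\tau\,\mathcal{Y}\tau+\tau G\tau\,\mathcal{Y}(I-\tau)$. For the first term, Theorem~\ref{t-gue140813} gives $\tau\Delta_b\tau=\tau E_1\tau$, hence $\tau\mathcal{Y}\tau=\tau\mathcal{Y}_1\tau$ with $\mathcal{Y}_1:=2E_1+\tfrac12 R\in L^1_{{\rm cl}}(M)$ elliptic (its principal symbol is $2\sigma_{E_1}$, real and nonvanishing); and since $2GE_1-I\in L^{-1}_{{\rm cl}}(M)$ by Theorem~\ref{t-gue141229I} while $G\cdot\tfrac12 R\in L^{-1}_{{\rm cl}}(M)$, there is $R_{-1}\in L^{-1}_{{\rm cl}}(M)$ with $G\mathcal{Y}_1=I+R_{-1}$. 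Therefore
\[ \tau G\tau\,\mathcal{Y}-\tau=\tau R_{-1}\tau-\tau G(I-\tau)\mathcal{Y}_1\tau+\tau G\tau\,\mathcal{Y}(I-\tau), \]
and it suffices to see that each of these three operators is a smoothing operator of order at least $1$. For $\tau R_{-1}\tau$ this follows from the oscillatory-integral description of the kernel of $\tau$ in Theorems~\ref{t-gue140812} and~\ref{t-gue140812I} combined with the Melin--Sj\"ostrand complex stationary phase formula, exactly as in the proof of Theorem~\ref{t-gue141227}; in fact $\tau R_{-1}\tau$ is a smoothing operator of order $2$. For the two cross terms one writes $I-\tau=I-S-\ol S-F$ (Theorem~\ref{t-gue140812}), uses $\Box_bS=\ddbar_bS=0$, Lemma~\ref{l-gue140813} to trade $\ol\Box_b$ for $\Box_b$ and conversely, Lemma~\ref{l-gue140813I} so that the mixed $S$--$\ol S$ products are smoothing, and the facts that $N,\ol N\colon H^s(M)\to H^{s+1}(M)$ and that $N,\ol N$ are smoothing operators of order $2$; running this through precisely as in the proofs of Theorem~\ref{t-gue140814I} and Lemma~\ref{l-gue141227I} shows that both cross terms are smoothing operators of order at least $1$ (indeed of order $3$). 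Hence $\tau G\tau\,\mathcal{Y}-\tau$ is a smoothing operator of order $1$, and by the reduction above $D=\tau G\tau-\tau\hat N$ is a smoothing operator of order $3$.

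The main obstacle is the cross-term bookkeeping in the middle step. The operators $G$, $\mathcal{Y}$, $\mathcal{Y}_1$ belong to the classical isotropic pseudodifferential calculus, while $\tau$, $S$, $\ol S$, $N$, $\ol N$, $\hat N$ are controlled only in the non-isotropic Heisenberg/Szeg{\H o} calculus; one must convert isotropic orders into non-isotropic ones, identify which intermediate compositions actually lie in $L^m_{{\rm cl},2}(M,\bR_+)$ rather than merely in the rough class of smoothing operators of order $<4$, and keep cumulative composition orders below the threshold $4$ demanded by Theorem~\ref{thm2.2} (appealing instead to the Stein--Yung symbolic calculus where that threshold would be violated). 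It is precisely the identities $\Box_bS=\ddbar_bS=0$ together with Lemmas~\ref{l-gue140813} and~\ref{l-gue140813I} that force the cross terms to collapse to sufficiently high order; everything else in the argument is routine manipulation of the parametrix identities.
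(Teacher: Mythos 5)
Your proof is correct and is essentially the paper's argument in a slightly reorganized form: both rest on combining the parametrix identity $G(2E_1+\tfrac{R}{2})=I+R_{-1}$ from Theorem~\ref{t-gue141229I}, the identity $\tau(2\Delta_b+\tfrac{R}{2})\tau=\tau(2E_1+\tfrac{R}{2})\tau$ from Theorem~\ref{t-gue140813}, and the Green's-operator relations for $\hat N$, reducing the proposition to showing that $\tau R_{-1}\tau$ and the two cross terms are smoothing of order at least $1$ (via $\tau=S+\ol S+F$, $\Box_bS=\ddbar_bS=0$, Lemmas~\ref{l-gue140813} and~\ref{l-gue140813I}, and the Stein--Yung calculus) before composing with the order-$2$ operator $\hat N$ through Theorem~\ref{thm2.2}. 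One small caveat: your parenthetical assertion that the cross terms are already of order $3$ is an overstatement --- the commutator bookkeeping, as in the paper, yields only order $1$ for $\tau G(I-\tau)(2E_1+\tfrac{R}{2})\tau$ --- but since your argument only uses order at least $1$ at that stage, this does not affect the conclusion.
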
 

\begin{proof}
From Theorem~\ref{t-gue140813} and Theorem~\ref{t-gue141229I}, we have that
\begin{equation}\label{e-gue150101f}
(\tau G\tau)(\tau(2\Delta_b+\frac{R}{2})\tau)=\tau+\tau P_{-1}\tau-\tau G(I-\tau)(2E_1+\frac{R}{2})\tau,
\end{equation}
where $P_{-1}\in L^{-1}_{{\rm cl\,}}(M)$.

We claim that
\begin{align}
\label{e-gue150101fI} &\mbox{$\tau P_{-1}\tau$ is a smoothing operator of order $2$}, \\
\label{e-gue150101fII} &\mbox{$\tau G(I-\tau)(2E_1+\frac{R}{2})\tau$ is a smoothing operator of order $1$}.
\end{align}
From Theorem~\ref{t-gue140812} and Lemma~\ref{l-gue140813I}, we have that
\begin{equation}\label{e-gue150101fIII}
\tau P_{-1}\tau\equiv SP_{-1}S+\ol SP_{-1}\ol S.
\end{equation}
From Lemma~\ref{l-gue140813} and Lemma~\ref{l-gue140813I} we have that
\begin{equation}\label{e-gue150101fIV}
\begin{split}
SP_{-1}S\equiv S\ol N\,\ol\Box_bP_{-1}S&=S\ol N\Box_bP_{-1}S+S\ol NL_1P_{-1}S\\
&=S\ol N[\Box_b,P_{-1}]S+S\ol NL_1P_{-1}S,
\end{split}
\end{equation}
where $L_1$ is a first order partial differential operator. From the symbolic calculus of Stein--Yung~\cite{SteinYung13}, we check that $[\Box_b,P_{-1}]S$ and $L_1P_{-1}S$ are smoothing operators of order $0$. 
From this observation, \eqref{e-gue150101fIV} and Theorem~\ref{thm2.2}, we conclude that $SP_{-1}S$ is a smoothing operator of order $2$. Similarly, $\ol SP_{-1}\ol S$ is a smoothing operator of order $2$. From \eqref{e-gue150101fIII}, we obtain~\eqref{e-gue150101fI}. 

Again, from Theorem~\ref{t-gue140812}, Lemma~\ref{l-gue140813I} and Lemma~\ref{l-gue140813}, we have that
\begin{equation}\label{e-gue150101fV}
\begin{split}
&\tau G(I-\tau)(2E_1+\frac{R}{2})\tau\\
&\equiv SG(I-S)(2E_1+\frac{R}{2})S+\ol SG(I-\ol S)(2E_1+\frac{R}{2})\ol S\\
&\equiv SG\ol\Box_b\,\ol NN\Box_b(2E_1+\frac{R}{2})S+\ol SG\Box_bN\ol N\,\ol\Box_b(2E_1+\frac{R}{2})\ol S\\
&=S[G,\Box_b]\ol NN\ol{\pr}^*_b[\ddbar_b,2E_1+\frac{R}{2}]S+SGL_1\ol NN\ol{\pr}^*_b[\ddbar_b,2E_1+\frac{R}{2}]S\\
&\quad+\ol S[G,\ol \Box_b]N\ol N\pr^*_b[\pr_b,2E_1+\frac{R}{2}]\ol S+\ol SG\ol L_1N\ol N\pr^*_b[\pr_b,2E_1+\frac{R}{2}]\ol S,
\end{split}
\end{equation}
where $L_1$ is a first order partial differential operator. From the symbolic calculus of Stein--Yung~\cite{SteinYung13}, we check that $\ol NN\ol{\pr}^*_b[\ddbar_b,2E_1+\frac{R}{2}]S$, $N\ol N\pr^*_b[\pr_b,2E_1+\frac{R}{2}]\ol S$ are smoothing operators of order $1$ and $S[G,\Box_b]$, $SGL_1$, $\ol S[G,\ol\Box_b]$, $\ol SG\ol L_1$ are smoothing operators of order $0$. From this observation, \eqref{e-gue150101fV} and Theorem~\ref{thm2.2}, we obtain~\eqref{e-gue150101fII}. 

Now, from Theorem~\ref{t-gue140812}, Lemma~\ref{l-gue140813I}, Lemma~\ref{l-gue140813} and recall that $\Delta_b=\Box_b+\ol\Box_b$, we have that
\begin{equation}\label{e-gue150102}
\begin{split}
(\tau(2\Delta_b+\frac{R}{2})\tau)\hat N&\equiv\tau-\tau(2\Delta_b+\frac{R}{2})(I-\tau)\hat N\\
&\equiv\tau-S(2\Delta_b+\frac{R}{2})(I-S)\hat N-\ol S(2\Delta_b+\frac{R}{2})(I-\ol S)\hat N\\
&=\tau-S(\ol\Box_b+\frac{R}{2})\Box_bN\hat N-\ol S(\Box_b+\frac{R}{2})\ol\Box_b\,\ol N\hat N\\
&\equiv\tau-S[L_1+\frac{R}{2},\ol{\pr}^*_b]\ddbar_bN\hat N-\ol S[\ol L_1+\frac{R}{2},\pr^*_b]\pr_b\ol N\hat N,
\end{split}
\end{equation}
where $L_1$ is a first order partial differential operator. From the symbolic calculus of Stein--Yung~\cite{SteinYung13}, we check that $[L_1+\frac{R}{2},\ol{\pr}^*_b]\ddbar_bN\hat N$ and $[\ol L_1+\frac{R}{2},\pr^*_b]\pr_b\ol N\hat N$ are smoothing operators of order $1$. From this observation, \eqref{e-gue150102} and Theorem~\ref{thm2.2}, we obtain that 
\begin{equation}\label{e-gue150102I}
(\tau(2\Delta_b+\frac{R}{2})\tau)\hat N=\tau+H,
\end{equation}
where $H$ is a smoothing operator of order $1$. From \eqref{e-gue150101f} and \eqref{e-gue150102I}, we find that 
\begin{equation}\label{e-gue150102II}
\tau G\tau+(\tau G\tau)H=\tau\hat N+(\tau P_{-1}\tau)\hat N-\tau G(I-\tau)(2E_1+\frac{R}{2})\tau\hat N.
\end{equation}
We can repeat the proof of \eqref{e-gue150101fI} and deduce that $\tau G\tau$ is a smoothing operator of order $2$ and hence 
\begin{equation}\label{e-gue150102III}
\mbox{$(\tau G\tau)H$ is a smoothing operator of order $3$.}
\end{equation}
From~\eqref{e-gue150101fI}, \eqref{e-gue150101fII}, \eqref{e-gue150102II} and~\eqref{e-gue150102III} we deduce that $\tau G\tau-\tau\hat N$ is a smoothing operator of order $3$.
\end{proof}

Fix a point $\zeta\in X$. The Green's function of $(\oP_4^\prime)^{\frac{1}{2}}$ at $\zeta$ is given by 
\begin{equation}\label{e-gue150106b}
G_\zeta:=(\oP_4^\prime)^{-\frac{1}{2}}\tau\delta_\zeta\tau\in\mathscr D'(M).
\end{equation}
It is easy to see that 
\begin{equation}\label{e-gue140827I}
\mbox{$(\oP_4^\prime)^{\frac{1}{2}}G_\zeta=\delta_\zeta-\pi(x,\zeta)$ on $\mathcal{P}$}.
\end{equation}
Note that $\pi(x,\zeta)\in C^\infty(M)\cap{\rm Ker\,}(\oP_4^\prime)^{-\frac{1}{2}}$. 

\begin{proof}[Proof of Theorem~\ref{t-gue140827}]
Fix $\zeta\in M$ and let $(z,t)$ be CR normal coordinates defined in a neighborhood of $\zeta$ such that $(z(\zeta),t(\zeta))=(0,0)$. For $m\in\bR$, let $\mE(\rho^m)$ be as in the discussion before Theorem~\ref{t-gue140827}.
Let $\ell_0\in\mathbb N_0$ and fix $\ell\gg\ell_0$. From Theorem~\ref{t-gue141229I} and Proposition~\ref{p-gue150101}, we have 
\begin{equation}\label{e-gue150106bI}
\begin{split}
G_\zeta&=\tau G\tau\delta_\zeta\tau+\tau A_\ell\tau\delta_\zeta\tau+\tau R_\ell\tau\delta_\zeta\tau\\
&=\tau\hat N\delta_\zeta\tau+\tau K\delta_\zeta\tau+\tau A_\ell\tau\delta_\zeta\tau+\tau R_\ell\tau\delta_\zeta\tau,
\end{split}
\end{equation}
where $K$ is a smoothing operator of order $3$. Since $R_\ell(x,y)\in C^\ell(M\times M)$, we can take $\ell$ large enough so that 
\begin{equation}\label{e-gue150106bII}
R_\ell\tau\delta_\zeta\in C^{\ell_0}(M).
\end{equation}
Since $K$ is a smoothing operator of order $3$,
\begin{equation}\label{e-gue150107I}
K\delta_\zeta\in\mathcal{E}(\rho^{-1}).
\end{equation}
From Theorem~\ref{thm2.2} and Theorem~\ref{t-gue141229I} we see that $A_\ell\tau$ is a smoothing operator of order $3-\varepsilon$, for every $0<\varepsilon<1$. Hence, 
\begin{equation}\label{e-gue150107}
A_\ell\tau\delta_\zeta\in\mathcal{E}(\rho^{-1-\varepsilon})
\end{equation}
for all $\varepsilon>0$.

Finally, we consider $\hat N\delta_\zeta$. It is clear that $\hat N\delta_\zeta$ is the Green's function of $2\Delta_b+\frac{1}{2}R$. It was shown in~\cite[Section~5]{ChengMalchiodiYang2013} that, near $\zeta$, $\hat N\delta_\zeta$ has the form
\begin{equation}\label{e-gue140827a}
\hat N\delta_\zeta(z,t)=\rho(z,t)^{-2}+\omega_0
\end{equation}
for some $\omega_0\in C^1(M)$. Moreover, repeating the method in~\cite[Section~10]{HsiaoYung2014}, we conclude that 
\begin{equation}\label{e-gue150107VI}
\omega_0\in\mathcal{E}(\rho^{-\varepsilon})
\end{equation}
for all $\varepsilon>0$.  The conclusion follows from~\eqref{e-gue150106bI}, \eqref{e-gue150106bII}, \eqref{e-gue150107I}, \eqref{e-gue150107}, \eqref{e-gue140827a} and \eqref{e-gue150107VI}. 
\end{proof}

In the proof of Theorem~\ref{thm:regularity}, we need the following result.

\begin{thm}\label{t-gue150107}
For every $\ell\in\mathbb N_0$, we have 
\[\tau B_\ell\tau\oP^\prime_4=\tau+\tau C_\ell\tau\ \ \mbox{on $\hat{\mathcal{P}}$},\]
where $B_\ell, C_\ell\colon C^\infty(M)\To\mathscr D'(M)$ are continuous operators, $B_\ell$ is a smoothing operator of order $4-\varepsilon$ for all $0<\varepsilon<1$, and $(\tau C_\ell\tau)(x,y)\in C^\ell(M\times M)$.
\end{thm}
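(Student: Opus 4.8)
The plan is to construct a left parametrix for $\oP_4^\prime$ modulo operators with a $C^\ell$ kernel; this is the $(\oP_4^\prime)^{-1}$-analogue of Theorem~\ref{t-gue141229I}, and the expected leading term of $B_\ell$ is $\hat N^2$, where $\hat N\colon L^2(M)\to{\rm Dom\,}(2\Delta_b+\frac{1}{2}R)$ is the partial inverse of $2\Delta_b+\frac{1}{2}R$ introduced before Proposition~\ref{p-gue150101}. Since $\hat N$ is a smoothing operator of order $2$, its kernel is $\lesssim\vartheta(x,y)^{-2}$, so the kernel of $\hat N^2$ is $\lesssim_\varepsilon\vartheta(x,y)^{-\varepsilon}$ for every $0<\varepsilon<1$ --- composing two order-$2$ kernels in homogeneous dimension $4$ produces a logarithmic factor --- with matching derivative and cancellation estimates; hence $\hat N^2$ is a smoothing operator of order $4-\varepsilon$. (The whole argument is unconditional: no positivity of $\oP_4^\prime$ is used, and $\oP_4^\prime$ itself is never inverted.)

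To see that $\hat N^2$ is a left parametrix of $\oP_4^\prime$ on $\mathcal{P}$ up to lower order, note that $\hat N(2\Delta_b+\frac{1}{2}R)=I-p$, where $p$ is the orthogonal projection onto ${\rm Ker\,}(2\Delta_b+\frac{1}{2}R)\subset C^\infty(M)$, a smoothing operator with $C^\infty$ kernel; since $p(2\Delta_b+\frac{1}{2}R)=0$ by self-adjointness, this gives $\hat N^2(2\Delta_b+\frac{1}{2}R)^2=I-p$. By~\eqref{e-gue140811IV}, the differential operator $P_4^\prime$ differs from $(2\Delta_b+\frac{1}{2}R)^2$ by a differential operator $D_1$ of lower (subelliptic) order, and $\oP_4^\prime=\tau P_4^\prime=\tau E_2$ on $\mathcal{P}$ by Theorem~\ref{t-gue140814I}. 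Combining these identities and sandwiching with $\tau$ yields, on $\mathcal{P}$,
\[ \tau\hat N^2\tau\,\oP_4^\prime=\tau-\tau p\tau+\tau\hat N^2 D_1\tau-\tau\hat N^2(I-\tau)P_4^\prime\tau=:\tau+\tau D\tau , \]
and $\tau D\tau$ is, by the Hsiao--Yung non-isotropic smoothing calculus, a smoothing operator of some order $j_0>0$: the $\tau p\tau$ piece has a $C^\infty$ kernel; the term $\tau\hat N^2 D_1\tau$ is handled because $D_1$ costs fewer orders than $\hat N^2$ gains; and $\tau\hat N^2(I-\tau)P_4^\prime\tau$ is handled exactly by the method of Lemma~\ref{l-gue141227I}, writing $I-S=N\Box_b$, $I-\ol S=\ol N\,\ol\Box_b$, using $S\ol{\pr}^*_b=0$ to move a $\Box_b$ onto $S$, inserting $\ol N^2\ol\Box_b^2$, and invoking the Stein--Yung calculus together with $N,\ol N$ being smoothing of order $2$. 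I expect this step --- pinning down the precise order $j_0$ of each error term, through the decomposition $\tau=S+\ol S+F$ --- to be the main technical obstacle, exactly as in Proposition~\ref{p-gue150101}.

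Finally, iterate on the left: for $m=m(\ell)$ large put $B_\ell:=\bigl(\sum_{j=0}^{m}(-1)^j(\tau D\tau)^j\bigr)\hat N^2$. Using $(\tau D\tau)\tau=\tau D\tau$, $(\tau D\tau)^j=\tau(D\tau)^j$, and the identity of the previous paragraph, a telescoping computation gives
\[ \tau B_\ell\tau\,\oP_4^\prime=\tau+(-1)^{m}(\tau D\tau)^{m+1}\quad\text{on }\hat{\mathcal{P}}, \]
and the correction is $\tau C_\ell\tau$ for a suitable continuous operator $C_\ell\colon C^\infty(M)\to\mathscr D'(M)$ with $\tau C_\ell\tau=(-1)^m(\tau D\tau)^{m+1}$. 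Since a smoothing operator of order $j_0$ gains at least $j_0/2$ derivatives on every Sobolev space, $(\tau D\tau)^{m+1}\colon H^{-r}(M)\to H^{r}(M)$ once $m$ is large, so $(\tau C_\ell\tau)(x,y)\in C^\ell(M\times M)$ by Sobolev embedding. Moreover each factor $(\tau D\tau)^j$ with $j\geq1$ makes $(\tau D\tau)^j\hat N^2$ strictly more regular near the diagonal than $\hat N^2$ (its kernel acquires a positive power of $\vartheta$), so $B_\ell$ is again a smoothing operator of order $4-\varepsilon$, and $B_\ell,C_\ell\colon C^\infty(M)\to\mathscr D'(M)$ are continuous. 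The one point to watch here is that Theorem~\ref{thm2.2} controls composed orders only below $4$, so the improvement from iterating must be read off from Sobolev mapping properties (and direct kernel estimates) rather than from the smoothing-operator calculus alone.
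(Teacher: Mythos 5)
Your proposal is correct and follows essentially the same architecture as the paper's proof: build a smoothing parametrix of order $4-\varepsilon$ for the leading part of $P_4^\prime$, show the resulting error $\tau D\tau$ (the paper's $\Upsilon$ in \eqref{e-gue150109I}) is smoothing of positive order and gains Sobolev regularity, and then run a finite Neumann series so that the remainder lands in $C^\ell(M\times M)$ by Sobolev embedding. The only substantive difference is the choice of parametrix: the paper takes $H$ to be a parametrix of the full hypoelliptic operator $4\Delta_b^2+L_2$ from \eqref{e-gue140811IV} directly, whereas you take $\hat N^2$ with $\hat N$ the partial inverse of $2\Delta_b+\tfrac12R$; this costs you one extra error term $\tau\hat N^2D_1\tau$ (which is indeed smoothing of positive order by the Stein--Yung calculus, exactly as you argue), but buys you an operator whose mapping and kernel properties are already recorded before Proposition~\ref{p-gue150101}. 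Your closing caveats are well placed: the paper secures the $C^\ell$ conclusion by proving $\Upsilon\colon H^s\to H^{s+1/2}$ directly from \eqref{e-gue150109II} rather than from a general ``order $j_0$ gains $j_0/2$ derivatives'' principle, and it is no more detailed than you are about why the full sum $B_\ell$ remains smoothing of order $4-\varepsilon$ outside the range of Theorem~\ref{thm2.2}.
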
 

\begin{proof}
In view of \eqref{e-gue140811IV}, we see that $\oP^\prime_4=\tau(4\Delta^2_b+L_2)\tau$, where $L_2=\nabla^2_b+\nabla_b+r$, $r\in C^\infty(X)$. Let $H$ be a parametrix of $4\Delta^2_b+L_2$. Then $H\colon H^s(M)\To H^{s+2}(M)$ for every $s\in\mathbb Z$ and $H$ is a smoothing operator of order $4-\varepsilon$ for every $0<\varepsilon<1$. From Theorem~\ref{t-gue140812} and Lemma~\ref{l-gue140813I}, we have that 
\begin{equation}\label{e-gue150109}
\begin{split}
\tau H\tau\oP^\prime_4&=(\tau H\tau)(\tau(4\Delta^2_b+L_2)\tau)\\
&=\tau-\tau H(I-\tau)(4\Delta^2_b+L_2)\tau-F_0\\
&=\tau-SH(I-S)(4\Delta^2_b+L_2)S-\ol SH(I-\ol S)(4\Delta^2_b+L_2)\ol S-F_1,
\end{split}
\end{equation}
where $F_0$ and $F_1$ are smoothing operators on $M$. Put 
\begin{equation}\label{e-gue150109I}
\Upsilon=SH(I-S)(4\Delta^2_b+L_2)S+\ol SH(I-\ol S)(4\Delta^2_b+L_2)\ol S+F_1.
\end{equation}
Note that $\Upsilon=\tau\Upsilon\tau$.
Repeating the procedure in \eqref{e-gue150102}, we conclude that 
\begin{equation}\label{e-gue150109II}
\begin{split}
&SH(I-S)(4\Delta^2_b+L_2)S=SHN\ol{\pr}^*_bQ_2S,\\
&\ol SH(I-\ol S)(4\Delta^2_b+L_2)S=\ol SH\ol N\pr^*_b\Td Q_2\ol S,
\end{split}
\end{equation}
where $Q_2, \Td Q_2\in L^2_{{\rm cl\,}}(M)$. From \eqref{e-gue150109I} and \eqref{e-gue150109II}, we conclude that 
$\Upsilon\colon H^s(M)\To H^{s+\frac{1}{2}}(M)$ for all $s\in\mathbb Z$ and $\Upsilon$ is a smoothing operator of order $1$. Fix $K\in\mathbb N$. Put 
\[B_K:=(\tau H\tau)(\tau+\Upsilon+\Upsilon^2+\cdots+\Upsilon^K).\]
Then, $B_K$ is a smoothing operator of order $4-\varepsilon$ for all $0<\varepsilon<1$. 
From \eqref{e-gue150109}, we have that
\[B_k\oP^\prime_4=\tau-\Upsilon^{K+1}.\]
Since $\Upsilon^{K+1}\colon H^s(M)\To H^{s+\frac{K+1}{2}}(M)$ for every $s\in\mathbb Z$, given $\ell\in\mathbb N_0$, we can take $K$ large enough so that $\Upsilon^{K+1}(x,y)\in C^\ell(M\times M)$. The theorem follows.
\end{proof}

In the proof of Theorem~\ref{thm:regularity}, we also need the following result.

\begin{thm}\label{t-gue150109}
Let $w\in L^2(M)$. If $\Delta_bw\in L^2(M)$, then there is a constant $c>0$ such that $e^{c\abs{w}^2}\in L^1(M)$.
\end{thm}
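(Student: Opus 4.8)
The plan is to reduce this to a Moser--Trudinger-type inequality for the sublaplacian $\Delta_b$ on the three-dimensional CR manifold $(M^3,T^{1,0}M,\theta)$, where the correct notion of ``derivatives'' is the non-isotropic one: $\Delta_b$ has order $2$ in the Carnot--Carath\'eodory sense, and the homogeneous dimension of $M$ is $4$. Thus $\Delta_b$ plays the role of a critical operator, exactly as the Paneitz operator does in dimension $4$ or as $(-\Delta)^{n/2}$ does on $\bR^n$. First I would set up the functional-analytic framework: since $M$ is embeddable, the operator $\Delta_b + I$ (extended to $L^2$ in the standard way, as in Lemma~\ref{l-gue150626}) is invertible with inverse $H$, and $H$ is a non-isotropic smoothing operator of order $2 - \varepsilon$ for every $\varepsilon > 0$; more precisely, one knows (see~\cite[Appendix~A]{HsiaoYung2014}) that the distributional kernel $H(x,y)$ satisfies the pointwise bound $|H(x,y)| \lesssim \vartheta(x,y)^{-2}$ near the diagonal, with an analogous logarithmic-type refinement. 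Given $w \in L^2(M)$ with $\Delta_b w \in L^2(M)$, we may write $w = H(\Delta_b w) + H w = H\big((\Delta_b + I)w\big)$, so that $w = Hf$ with $f := (\Delta_b+I)w \in L^2(M)$; the point is that $w$ is the image under a kernel with the ``critical'' singularity $\vartheta(x,y)^{-2} = \vartheta(x,y)^{-(Q-2)}$ (with $Q=4$) of an $L^2$ function.

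The second, and main, step is to invoke the sharp Adams--Moser--Trudinger theorem of Fontana and Morpurgo~\cite{FontanaMorpurgo2011} (or, since we only need the existence of \emph{some} exponential integrability constant $c > 0$ and not the sharp one, a softer argument suffices). Their general result applies to integral operators on a space of homogeneous type whose kernel $K(x,y)$ agrees, to leading order, with a constant multiple of $\vartheta(x,y)^{-(Q-2)}$ and whose error term has a strictly better power; here $Q = 4$, $Q - 2 = 2$, and $H(x,y)$ has exactly this form. One then concludes that there exist constants $c > 0$ and $C > 0$, depending only on $(M^3,T^{1,0}M,\theta)$, such that
\[
\int_M \exp\!\left( c\,\frac{|Hf(x)|^2}{\|f\|_{L^2(M)}^2} \right) \theta \wedge d\theta \leq C
\]
for all $f \in L^2(M)$ with $f \not\equiv 0$. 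Applying this with $f = (\Delta_b + I)w$ and noting $\|f\|_{L^2}^2 \leq 2\|\Delta_b w\|_{L^2}^2 + 2\|w\|_{L^2}^2 < \infty$, we obtain $e^{c'|w|^2} \in L^1(M)$ for the rescaled constant $c' = c/\|f\|_{L^2}^2 > 0$, which is precisely the claim. (Alternatively, if one prefers to avoid the full strength of~\cite{FontanaMorpurgo2011}: expand $e^{c|w|^2} = \sum_k c^k |w|^{2k}/k!$, and bound $\|w\|_{L^{2k}(M)}$ by interpolating the non-isotropic Sobolev embedding $W^{2,2}_{\mathrm{n.i.}} \hookrightarrow L^p$ for all $p < \infty$ — which holds because the homogeneous dimension is $4$ and $2 \cdot 2 = 4$ is the critical case — keeping track of the growth of the embedding constant in $p$; one checks it grows like $\sqrt{k}$, so the series converges for $c$ small.)

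The main obstacle, and the only place where genuine care is needed, is verifying that the kernel $H(x,y)$ of $(\Delta_b + I)^{-1}$ has precisely the leading singularity $\vartheta(x,y)^{-2}$ required to feed into the Fontana--Morpurgo machinery, with an error that is a strictly lower-order (hence weakly singular, or bounded) kernel. This is a parametrix construction for $\Delta_b + I$ in the non-isotropic calculus: $\Delta_b$ is subelliptic (not elliptic), so one must work with the Folland--Stein / Nagel--Stein--Wainger estimates for the sublaplacian rather than with classical pseudodifferential operators. The required kernel bounds are, however, already available in the literature the paper relies on — specifically the smoothing-operator estimates of~\cite[Appendix~A]{HsiaoYung2014} used in the proof of Lemma~\ref{l-gue150626} — so the argument is to quote these, identify the leading term, and apply~\cite{FontanaMorpurgo2011} (or the elementary series argument). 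Everything else is routine.
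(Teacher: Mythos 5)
Your proposal is correct and follows essentially the same route as the paper: write $w=(\Delta_b+I)^{-1}\bigl((\Delta_b+I)w\bigr)$, quote the kernel bound $\lv Q(x,y)\rv\lesssim\vartheta(x,y)^{-2}$ from~\cite{ChengMalchiodiYang2013,HsiaoYung2014}, and feed this into the Adams-type theorem of Fontana--Morpurgo. The only cosmetic difference is that the paper passes from the pointwise kernel bound to the weak-type level-set estimates $\sup_x\abs{\set{y\colon\abs{Q(x,y)}>s}}\lesssim s^{-2}$, which is all that the non-sharp version of the Fontana--Morpurgo theorem (Theorem~\ref{t-gue150109I}) requires, so no identification of the precise leading term of the kernel is needed.
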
 

To prove Theorem~\ref{t-gue150109}, we need the following Adams-type theorem of Fontana and Morpurgo~\cite{FontanaMorpurgo2011}.

\begin{thm}\label{t-gue150109I}
Let $A\colon L^2(M)\To L^2(M)$ be a continuous operator with distribution kernel $A(x,y)\in C^\infty(M\times M\setminus{\rm diag\,}(M\times M))$. Suppose that the kernel $A(x,y)$ satisfies
\begin{equation}\label{e-gue150109a}
\begin{split}
&\sup_{x\in M}\abs{\set{y\in M\colon \abs{A(x,y)}>s}}\leq Ks^{-2},\\
&\sup_{y\in M}\abs{\set{x\in M\colon \abs{A(x,y)}>s}}\leq Ks^{-2}
\end{split}
\end{equation}
as $s\To\infty$, where $K>0$ is a constant and 
\[\abs{\set{y\in M\colon \abs{A(x,y)}>s}},\ \ \abs{\set{x\in M\colon \abs{A(x,y)}>s}}\]
denote the volumes of the sets $\set{y\in M\colon \abs{A(x,y)}>s}$ and $\set{x\in M\colon \abs{A(x,y)}>s}$, respectively with respect to the given volume form on $M$. Then, for any $f\in L^2(M)$ with $Tf\in L^2(M)$, there is a constant $c>0$ such that $e^{c\abs{f}^2}\in L^1(M)$.
\end{thm}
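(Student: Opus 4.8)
The statement is the Fontana--Morpurgo Adams-type inequality, and the plan is to reduce it to an $L^p$-growth bound for $A$ and then sum an exponential series. First I would recast the hypothesis: following~\cite{FontanaMorpurgo2011}, the condition that $f\in L^2(M)$ with $Tf\in L^2(M)$ is to be read as $f=Ag$ for some $g\in L^2(M)$ --- when $A$ is a parametrix (or partial inverse) of an operator $T$ these agree modulo a bounded smooth function, which alters $\lvert f\rvert^2$ by at most an additive constant and hence does not affect the conclusion. So it suffices to show: for every $g\in L^2(M)$ there is $c>0$ with $e^{c\lvert Ag\rvert^2}\in L^1(M)$, and by homogeneity we may take $\lVert g\rVert_{L^2(M)}=1$. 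The key reformulation of the target is the elementary fact that $e^{c\lvert f\rvert^2}\in L^1(M)$ whenever $\lVert f\rVert_{L^{2k}(M)}\le C_0\sqrt{k}$ for all $k\in\bN$ with $C_0$ independent of $k$: expanding the exponential and using $k!\ge(k/e)^k$,
\[
\int_M e^{c\lvert f\rvert^2}\,dv=\sum_{k=0}^\infty\frac{c^k}{k!}\lVert f\rVert_{L^{2k}(M)}^{2k}\le\lvert M\rvert+\sum_{k=1}^\infty\frac{(cC_0^2)^k k^k}{k!}\le\lvert M\rvert+\sum_{k=1}^\infty\bigl(ec\,C_0^2\bigr)^k<\infty
\]
as soon as $ec\,C_0^2<1$. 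Thus everything comes down to the operator bound $\lVert Ag\rVert_{L^{2m}(M)}\le C_0\sqrt m\,\lVert g\rVert_{L^2(M)}$ for all $m\ge1$, with $C_0$ depending only on $K$ and $\lvert M\rvert$.

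The second step is to extract this bound from~\eqref{e-gue150109a}. The first line of~\eqref{e-gue150109a} says exactly that $\lvert A(x,\cdot)\rvert$ lies in weak-$L^2(M)$ uniformly in $x$; since $\lvert M\rvert<\infty$, computing the distribution function gives, for each $q\in[1,2)$, the estimate $\sup_{x}\lVert A(x,\cdot)\rVert_{L^q(M)}\lesssim(2-q)^{-1/2}$ with implied constant depending only on $K$ and $\lvert M\rvert$, and the second line of~\eqref{e-gue150109a} gives the same for $\sup_{y}\lVert A(\cdot,y)\rVert_{L^q(M)}$. Then I would invoke the Schur/Young inequality for integral operators with an $L^q$ kernel: interpolating $A\colon L^1(M)\to L^q(M)$ against $A\colon L^{q'}(M)\to L^\infty(M)$ yields $A\colon L^2(M)\to L^{2m}(M)$ with norm controlled by that $L^q$ kernel norm provided $1+\tfrac1{2m}=\tfrac12+\tfrac1q$, i.e.\ $q=q_m:=\tfrac{2m}{m+1}\in[1,2)$. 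Since $(2-q_m)^{-1/2}\simeq\sqrt m$, this gives $\lVert A\rVert_{L^2(M)\to L^{2m}(M)}\le C_0\sqrt m$ for all $m\ge1$, hence $\lVert Ag\rVert_{L^{2m}(M)}\le C_0\sqrt m$ when $\lVert g\rVert_{L^2(M)}=1$, and combining with the series estimate of the first paragraph finishes the proof with any $c\in\bigl(0,(eC_0^2)^{-1}\bigr)$ (and $c\in\bigl(0,(eC_0^2\lVert g\rVert_{L^2}^2)^{-1}\bigr)$ in general).

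The main obstacle --- and the reason this is a borderline statement rather than a consequence of Sobolev embedding --- is the endpoint $q=2$: a kernel only in weak-$L^2$ does \emph{not} map $L^2(M)$ into $L^\infty(M)$, so one cannot hope for a pointwise bound on $Ag$; instead one must run the argument with $q=q_m<2$ and track precisely how $\lVert A\rVert_{L^2\to L^{2m}}$ blows up as $m\to\infty$. The computation above shows this blow-up is of order $(2-q_m)^{-1/2}\sim\sqrt m$, which is exactly --- and only just --- summable against $1/k!$; any faster growth would break the series, and slower growth would demand a hypothesis stronger than~\eqref{e-gue150109a}. It is no accident that the power $s^{-2}$ in~\eqref{e-gue150109a} matches the exponent $2$ in $e^{c\lvert f\rvert^2}$: the estimate is scale-invariant, which is what keeps the bookkeeping tight. (In the application to Theorem~\ref{t-gue150109} one takes $A=\hat N$, the partial inverse of $2\Delta_b+\tfrac12 R$: it obeys~\eqref{e-gue150109a} because it is a smoothing operator of order $2$, so $\lvert\hat N(x,y)\rvert\lesssim\vartheta(x,y)^{-2}$ while $\lvert B(x,r)\rvert\lesssim r^{4}$, and $f=w$ is admissible since $(2\Delta_b+\tfrac12 R)w\in L^2(M)$ whenever $\Delta_b w\in L^2(M)$.)
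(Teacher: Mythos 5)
Your argument is correct, but note that the paper does not actually prove this statement at all: Theorem~\ref{t-gue150109I} is imported verbatim (in somewhat garbled form) from Fontana--Morpurgo~\cite{FontanaMorpurgo2011} and used as a black box in the proof of Theorem~\ref{t-gue150109}. So what you have written is a self-contained substitute for the citation rather than a variant of an existing argument. Your route --- convert the uniform weak-$L^2$ kernel bounds into $\sup_x\lVert A(x,\cdot)\rVert_{L^{q_m}}\lesssim\sqrt m$ for $q_m=2m/(m+1)$, feed this into the generalized Young/Schur inequality to get $\lVert A\rVert_{L^2\to L^{2m}}\lesssim\sqrt m$, and sum the Taylor series of the exponential --- is the classical elementary path to \emph{non-sharp} exponential integrability, and it delivers exactly the conclusion stated here (some $c>0$), which is all that the application in Theorem~\ref{t-gue150109} and the regularity argument need. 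Fontana--Morpurgo's actual theorem is stronger (a sharp constant in $\exp(\beta\lvert Tf\rvert^2/\lVert f\rVert^2)$, proved via O'Neil-type rearrangement estimates), and that sharp version is what the paper needs elsewhere, in Theorem~\ref{thm:cr_paneitz_adams}, where the precise $16\pi^2$ matters; your series argument would not recover that. Two small caveats. First, you correctly reinterpret the paper's hypothesis ``$f\in L^2(M)$ with $Tf\in L^2(M)$'' (with $T$ undefined) as ``$f=Ag$ for some $g\in L^2(M)$''; that is indeed the intended reading, as the application takes $A=Q=(\Delta_b+I)^{-1}$ and $f=w=Qg$, but you are also tacitly assuming that $A$ acts by integration against its off-diagonal kernel (the statement as written would be false for, say, $A=I$); this should be made explicit. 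Second, the exponent in your weak-type computation is $(2-q)^{-1/q}$ rather than $(2-q)^{-1/2}$, but since $q_m\in[1,2)$ and $2-q_m=2/(m+1)$, both are $\simeq\sqrt m$ uniformly in $m$, so nothing is affected.
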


\begin{proof}[Proof of Theorem~\ref{t-gue150109}]
Put $g:=(\Delta_b+I)w\in L^2(M)$. Let $Q$ be the inverse of $\Delta_b+I$. Then, $w=Qg$. It is known that (see~\cite[Section 2]{ChengMalchiodiYang2013} and~\cite[Section 10]{HsiaoYung2014})
\begin{equation}\label{e-gue150110}
\abs{Q(x,y)}\lesssim\vartheta(x,y)^{-2}. 
\end{equation}
From \eqref{e-gue150110}, one readily checks that
\begin{equation}\label{e-gue150110a}
\begin{split}
&{\rm sup\,}_{x\in M}\abs{\set{y\in M;\, \abs{Q(x,y)}>s}}\lesssim s^{-2},\\
&{\rm sup\,}_{y\in M}\abs{\set{x\in M;\, \abs{Q(x,y)}>s}}\lesssim s^{-2},
\end{split}
\end{equation}
as $s\To\infty$. The conclusion follows from~\eqref{e-gue150110a} and Theorem~\ref{t-gue150109I}.
\end{proof}

\end{document}